\documentclass[11pt]{article}

\usepackage{a4wide,amsfonts,amssymb,stmaryrd,amscd,amsmath,latexsym,amsbsy}
\usepackage{pstricks,pst-plot,pst-node}

\usepackage{amsmath,amsthm,amsfonts}

\theoremstyle{plain}

\newtheorem{theorem}{Theorem}[section]

\newtheorem*{conjecture}{Conjecture}

\newtheorem{thm}{Theorem}[section]

\newtheorem{prop}[theorem]{Proposition}

\newtheorem{corstar}[theorem]{Corollary*}

\newtheorem{lem}[theorem]{Lemma}

\newtheorem{cor}[theorem]{Corollary}

\newtheorem*{sol}{Solution}


\theoremstyle{definition}

\theoremstyle{remark}

\newcommand{\CC}{\mathbb C}
\newcommand{\QQ}{\mathbb Q}

\newcommand{\ZZ}{\mathbb Z}
\newcommand{\HH}{H}

\newcommand{\solu}[1]{\begin{sol}{\bf (\ref{#1})}}

\newcommand{\gotg}{\widehat{\mathfrak g}}

\newcommand{\An}{\mathcal{A}_n}

\newcommand{\pp}{\mathfrak{p}}

\newcommand{\AAA}{\mathcal{A}}

\newcommand{\Mbar}{\overline{M}}

\newcommand{\ev}{\mathrm{ev}}

\newcommand{\hoth}{\hat{\mathfrak{h}}}

\newcommand{\Hb}{\mathrm{Hilb}}

\pagestyle{plain}

\begin{document}

\title{Quantum cohomology of the Hilbert scheme of points on $\An$-resolutions}

\author{Davesh Maulik and Alexei Oblomkov}
\maketitle
\date{}

\abstract{ We determine the two-point invariants of the equivariant quantum cohomology of the
Hilbert scheme of points of surface resolutions associated to type $A_{n}$ singularities.
The operators encoding these invariants are expressed in terms of the action of the the
affine Lie algebra $\widehat{\mathfrak{gl}}(n+1)$ on its basic representation. Assuming a
certain nondegeneracy conjecture, these operators determine the full structure of the
quantum cohomology ring.  A relationship is proven between the quantum cohomology and
Gromov-Witten/Donaldson-Thomas theories of $A_{n}\times\mathbf{P}^1$.  We close with a
discussion of the monodromy properties of the associated quantum differential equation
and a generalization to singularities of type $D$ and $E$.}

\tableofcontents

\section{Introduction}\label{introduction}

\subsection{Overview}
Given a quasiprojective surface $S$, the Hilbert scheme
$\mathrm{Hilb}_{m}(S)$ of $m$ points on $S$ parametrizes
zero-dimensional subschemes of $S$ of length $m$.  The Hilbert
scheme is a nonsingular irreducible quasiprojective algebraic
variety of dimension $2m$.  It contains an open dense set
parametrizing configurations of $m$ distinct points and can be
viewed as a crepant resolution of the symmetric product of $S$.
The classical cohomology of these varieties has been well-studied
\cite{nakajima, grojnowski, qin-wang} and, as we shall explain later, admits
a description in terms of the representation theory of Heisenberg
algebras.

In this paper, we consider the following family of surfaces. Let
$\zeta$ be a primitive $(n+1)$-th root of unity and let the generator
of the cyclic group $\mathbb{Z}_{n+1}$ act on $\mathbb{C}^2$ by
$$(z_1,z_2) \mapsto (\zeta z_1,\zeta^{-1} z_2).$$ We denote by
 $\An$ the minimal resolution of the quotient
$$\An \rightarrow \mathbb{C}^2/\mathbb{Z}_{n+1}.$$
The diagonal action of $T= (\mathbb{C}^{\ast})^{2}$ on
$\mathbb{C}^2$ commutes with the cyclic group action and
therefore lifts to a $T$-action on both $\An$ and $\Hb(\An)$.

Our goal is to study the small quantum product on the
$T$-equivariant cohomology of $\Hb_m(\An)$ for all $m$ and $n$.
Quantum cohomology is a deformation of the classical cohomology
ring of $\Hb(\An)$.  The structure constants of the ring are
defined by a virtual count of rational curves passing through
specified subvarieties of the Hilbert scheme, weighted by degree.

The main theorem of this paper is an explicit operator formula for quantum
multiplication by divisors in $H_{T}^{2}(\Hb_m(\An),\mathbb{Q})$.
These operators have a simple expression in terms
of the action of the affine Lie algebra $\widehat{\mathfrak{gl}}(n+1)$
on its basic representation.  Under the assumption of a nondegeneracy conjecture (see section~\ref{gener}), 
the divisor operators generate the entire quantum cohomology ring and, in particular,
can be used to calculate the full genus $0$ Gromov-Witten theory.


In the case of $\CC^{2}$, the quantum cohomology ring has been completely calculated in \cite{okpanhilb}.  Along with our results here, these surfaces are the only surfaces for which the divisor operators have been fully calculated for an arbitrary number of points.  Our strategy is motivated by the approach of \cite{okpanhilb} for $\CC^{2}$.  The presence of compact directions on the underlying surfaces greatly complicates the geometry of the Hilbert scheme; the existence of a holomorphic symplectic form is essential for circumventing these difficulties.

\subsection{Relation to other theories}

Using the results of \cite{gwan}, \cite{dtan}, we show that the divisor
operators of this paper satisfy a 
triangle of equivalences between the Gromov-Witten theory of
$\An\times \mathbf{P}^1$, the Donaldson-Thomas theory of
$\An\times \mathbf{P}^1$, and the quantum cohomology of the
Hilbert scheme of points on the $\An$ surface.

\begin{figure}[hbtp]\psset{unit=0.5 cm}
  \begin{center}
    \begin{pspicture}(-6,-2)(10,6)
    \psline(0,0)(2,3.464)(4,0)(0,0)
    \rput[rt](0,0){
        \begin{minipage}[t]{3.64 cm}
          \begin{center}
            Gromov-Witten \\ theory of $\An \times \mathbf{P}^1$
          \end{center}
        \end{minipage}}
    \rput[lt](4,0){
        \begin{minipage}[t]{3.64 cm}
          \begin{center}
             Donaldson-Thomas\\ theory of $\An \times \mathbf{P}^1$
          \end{center}
        \end{minipage}}
    \rput[cb](2,4.7){
        \begin{minipage}[t]{4 cm}
          \begin{center}
           Quantum cohomology \\ of $\Hb(\An)$
          \end{center}
        \end{minipage}}
    \end{pspicture}
  \end{center}
\end{figure}

The above triangle was first shown to hold for $\mathbf{C}^2$ in
\cite{localcurves},\cite{okpanhilb},\cite{okpandt}.  While the
equivalence between Gromov-Witten theory and Donaldson-Thomas
theory is expected to hold for arbitrary threefolds, the
relationship with the quantum cohomology of the Hilbert scheme
breaks down for a general surface, at least in the specific form
we describe here.  Our work for $\An$ surfaces provides the only
other examples for which this triangle is known to hold.

\subsection{Acknowledgements}

We wish to thank A. Okounkov and R. Pandharipande for many conversations about their paper \cite{okpanhilb}, which were important in shaping the arguments here. We also thank
J. Bryan, P. Etingof, S. Loktev, for many useful discussions.
D.M. was partially supported by an NSF Graduate Fellowship and a Clay Research Fellowship.
A.O. was partially supported by NSF grant DMS-0111298 and DMS-0701387.

\section{Statement of theorems}\label{statements}

In this section, we state precisely the operators for
multiplication by divisors on the quantum cohomology ring.
In order to express these cleanly, we explain how to write
the cohomology of $\Hb(\An)$ in terms of the
representation theory of $\widehat{\mathfrak{gl}}(n+1)$.

\subsection{Topology of $\An$ and the root lattice}

We first set notation for the geometry of the $\An$ surfaces.
Viewed as a crepant resolution of a quotient singularity, the
exceptional locus of $\An$ consists of a chain of $n$ rational
curves $E_1, \dots, E_n$ with intersection matrix given by the
negative Cartan matrix for the Dynkin diagram $A_n$.  That is,
each $E_{i}$ has self-intersection $-2$ and intersects $E_{i-1}$
and $E_{i+1}$ transversely.  These classes span $\HH^2(\An,
\mathbb{Q})$ and, along with the identity class, span the full
cohomology ring of $\An$.  We will also work with the dual basis
$\{\omega_1,\dots,\omega_n\}$ of $\HH^{2}(\An,\QQ)$, defined by the
property that
$$\langle\omega_{i},E_{j}\rangle = \delta_{i,j}$$
under the Poincare pairing.

Under the $T$-action, there are $n+1$ fixed points $p_1, \dots,
p_{n+1}$; the tangent weights at the fixed point $p_i$ are given
by
\begin{gather*}
w^L_i:=(n+2-i)t_1+(1-i)t_2,\\
w^R_i:=(-n+i-1)t_1+it_2.
\end{gather*}
The $E_i$ are the $T$-fixed curves joining $p_{i}$ to $p_{i+1}$.
We denote by $E_{0}$ and $E_{n+1}$ for the noncompact $T$-fixed
curve direction at $p_{1}$ and $p_{n+1}$ respectively.
\begin{center}
\begin{picture}(300,130)(0,0) \put(150,30){\line(-3,1){60}}
\put(90,50){\line(-1,1){50}} \put(64,52){$3t_1$}
\put(90,50){\vector(-1,1){20}} \put(90,50){\vector(3,-1){25}}
\put(150,30){\vector(-3,1){25}} \put(72,36){$t_2-2t_1$}
\put(105,25){$2t_1-t_2$} \put(90,50){\circle{2}}
\put(90,50){\circle{1}} \put(90,55){$p_1$}
\put(150,30){\circle{1}}\put(150,30){\circle{2}}
\put(150,35){$p_2$}
\put(150,30){\line(3,1){60}}\put(150,30){\vector(3,1){25}}
\put(210,50){\vector(-3,-1){25}} \put(210,50){\line(1,1){50}}
\put(210,50){\vector(1,1){20}}\put(210,50){\circle{1}}
\put(210,50){\circle{2}} \put(202,55){$p_3$} \put(223,52){$3t_2$}
\put(163,25){$2t_2-t_1$}\put(198,36){$t_1-2t_2$}
\end{picture}
\end{center}

As usual, we denote by $\mathfrak{gl}(n+1)$ the Lie algebra (defined over $\QQ$) of
$(n+1)\times(n+1)$ matrices.  We will denote by $e_{ij} \in
\mathfrak{gl}(n+1)$ the matrix with $1$ in position $(i,j)$ and
$0$ everywhere else.  With respect to the Cartan subalgebra
$\mathfrak{h}$ of diagonal matrices, the roots are given by
functionals $\alpha_{ij} \in \mathfrak{h}^{\ast}$ which take the
value $a_{ii}-a_{jj}$ on the diagonal matrix $(a_{kk})$.  There
is an identification of lattices between $\HH_{2}(\An, \mathbb{Z})$
with the $A_n$ root lattice obtained by sending the exceptional
curves $E_{i}$ to the simple roots $\alpha_{i,i+1}$. Under this
identification, there is a distinguished set of effective curve
classes
$$\alpha_{ij} = E_{i}+ \dots + E_{j-1}$$
which correspond to positive roots in the $A_{n}$ lattice.

\subsection{Fock space formalism}

In this section, we introduce the Fock space modelled on
$\HH_{T}^{\ast}(\An,\QQ)$ and explain the identification with
$\HH_T^{\ast}(\Hb(\An),\QQ)$.  These constructions for projective surfaces
were first provided in \cite{nakajima,grojnowski} and extended in this setting
to the equivariant context in \cite{qin-wang}.
Consider the Heisenberg algebra
$\mathcal{H}$ generated over the field $\QQ(t_1,t_2)$ by a central
element $c$ and elements
$$\mathfrak{p}_{k}(\gamma), \gamma \in \HH_{T}^{\ast}(\An, \QQ), k \in \ZZ, k \ne 0,$$
so that $\mathfrak{p}_{k}(\gamma)$ are $\QQ(t_1,t_2)$-linear in the labels
$\gamma$. The Lie algebra structure on $\mathcal{H}$ is defined by
the following commutation relations
\begin{gather*}
 [\pp_k(\gamma_{1}), \pp_l(\gamma_{2})] = -k \delta_{k+l}\langle\gamma_1, \gamma_2\rangle
\cdot c \\
 [c, \pp_k(\gamma)] = 0.
\end{gather*}
Our sign convention for the commutator differs from \cite{nakajima} but simplifies later formulas.
Notice that we can pick a basis of cohomology for which the Poincare pairing never takes denominators that are divisible by $(t_1+t_2)$; as a result,
$\mathcal{H}$ can be defined over the ring 
$$R = \QQ[t_1,t_2]_{(t_1+t_2)}$$ of rational functions
with non-negative valuation at $(t_1+t_2)$.

The Fock space $\mathcal{F}_{\An}$ is freely generated over the
ring $\QQ[t_1,t_2]$ by the action of the commuting creation
operators $\pp_{-k}(\gamma)$, for $k>0$ and $\gamma \in
\HH^{\ast}_{T}(\An, \QQ)$ on the vacuum vector
$v_{\emptyset}.$  There is an orthogonal grading
$$\mathcal{F}_{\An} = \bigoplus_{m\geq 0} \mathcal{F}_{\An}^{(m)}$$
induced by defining the degree of $v_{\emptyset}$ to be zero and
the degree of each operator $\pp_k(\gamma)$ to be $-k$.

After extension of scalars to the fraction field $\QQ(t_1,t_2)$,
$\mathcal{F}_{\An}$ admits the structure of a representation of
$\mathcal{H}$.  Under this action, the annihiliation operators
$\pp_k(\gamma)$ with $k > 0$ kill the vacuum vector and the
central element $c$ acts trivially. Similarly, we define a
nondegenerate pairing on $\mathcal{F}_{\An}\otimes \QQ(t_1,t_2)$
by requiring
$$\langle v_{\emptyset}| v_{\emptyset}\rangle = 1$$
and specifiying the adjoint
$$\pp_k(\gamma)^{\ast} = (-1)^{k}\pp_{-k}(\gamma).$$

There is a graded isomorphism
$$\mathcal{F}_{\An} = \bigoplus_{m \geq 0} H_{T}^{\ast}(\Hb_m(\An),\QQ)$$
Under this isomorphism, the Heisenberg operators $\pp_k(\gamma)$
are defined by correspondences between Hilbert schemes of
different numbers of points. The inner product on
$\mathcal{F}_{\An}$ over $\QQ(t_1,t_2)$ defined above corresponds
to the Poincare pairing on $\Hb_m(\An)$ defined by $T$-equivariant
residue.

\subsection{Nakajima basis}\label{Nak basis}

If we work with a fixed basis $\{\gamma_{0}, \dots, \gamma_{n}\}$
of $H_{T}^{\ast}(\An,\QQ)$, our Fock space has a natural basis
indexed as follows.  Given a nonnegative integer $m$, a
\textit{cohomology-weighted} partition of $m$ consists of an
unordered set of pairs
$$\overrightarrow{\mu} = \{(\mu^{(1)}, \gamma_{i_1}), \dots,
(\mu^{(l)}, \gamma_i^{l})\}$$ where
$\{\mu^{(1)},\dots,\mu^{(l)}\}$ is a partition whose parts are
labelled by elements $\gamma_{i_{k}}$ in our basis.  For each
cohomology-weighted partition $\overrightarrow{\mu}$ as above, the
associated basis element is given by
$$\frac{1}{\mathfrak{z}(\overrightarrow{\mu})}
\prod_{k=1}^{l} \pp_{- \mu^{(k)}}({\gamma_{i_{k}}})
v_{\emptyset},$$
where 
$$\mathfrak{z}(\overrightarrow{\mu}) = \prod \mu^{(k)} \cdot |\mathrm{Aut}(\overrightarrow{\mu})|$$
We will also denote this basis element by
$\overrightarrow{\mu}$ when there is no confusion.  
The associated
basis of $H_{T}^{\ast}(\Hb_m(\An),\QQ)$ will be called the Nakajima
basis. It is clear from the definition that this basis respects
the grading of $\mathcal{F}_{\An}$, so that cohomology-weighted
partitions of $m$ form a basis of $H_{T}^{\ast}(\Hb_{m}(\An),\QQ)$
over $\QQ[t_1,t_2]$.  If we want to stress the underlying choice of basis $\gamma_{k}$, we will write 
$$\mu_{1}(\gamma_{1})\cdot\mu_{2}(\gamma_{2})\dots\mu_{r}(\gamma_{r})$$
where $\mu_{k}$ is the subpartition of $\mu$ labelled with $\gamma_{k}$.

The cohomological degree of a basis element
$\overrightarrow{\mu}$ under these identifications is
$$2(m - l(\mu)) + \sum \mathrm{deg}(\gamma_{i_{k}}).$$
Finally, under the $T$-equivariant Poincare pairing, the dual
basis of the Nakajima basis is given, up to constant factors, by
cohomology-weighted partitions labelled with the dual basis of
$\{\gamma_{i}\}$.

In terms of the basis $\{1, \omega_1, \dots, \omega_n\}$, we can
see from the formula for cohomological degree that there are two
types of divisors in the Nakajima basis. First, we have
$$D = - \{(2,1), (1,1)^{m-2}\}$$
which is proportional to the boundary divisor on $\Hb_{m}(\An)$
where two points collide. Second, we have for $i = 1, \dots, n$,
$$(1,\omega_{i}) = \{(1,\omega_{i}), (1,1)^{m-1}\}.$$
These latter divisors are clearly non-negative on effective curve classes.
We will give explicit formulas for quantum multiplication by these
elements.  We will also use this basis of divisors to measure
degrees of curve classes.

\subsection{Affine algebra $\widehat{\mathfrak{gl}}(n+1)$ and the basic representation}

The Fock space description given above can be reinterpreted in
terms of the representation theory of the affine algebra
$\widehat{\mathfrak{gl}}(n+1)$.  While the previous discussion is
valid for any surface $S$, this interpretation is a special
feature of the $\An$ surfaces.

The Lie algebra $\gotg = \widehat{\mathfrak{gl}}(n+1)$ is defined
over $\QQ$
in terms of a central extension of the loop algebra
$\mathfrak{gl}(n+1)\otimes \QQ[t, t^{-1}]$. It is generated by
elements
$$x(k)= x\cdot t^{k}, \quad x \in \mathfrak{gl}(n+1),\quad k \in \ZZ,$$
a central element $c$ and a differential $d$.  The defining
relations are
\begin{gather*}
[x(k),y(l)]=[x,y](k+l)+k\delta_{k+l,0}tr(xy)c,\\
[d,x(k)]=kx(k),\quad [d,c]=0,
\end{gather*}
where $tr(xy)$ refers to the trace of the matrix $xy$. The Cartan
subalgebra of $\gotg$ is given by
$$\hoth =\mathfrak{h} \oplus\QQ c\oplus \QQ d.$$

Using this direct sum decomposition, we can write the dual space
$$\hoth^{\ast} = \mathfrak{h}^{\ast} \oplus \QQ \Lambda \oplus \QQ \delta$$
where $\Lambda(c) = 1, \Lambda(d) = 0$ and $\delta(d) = 1,
\delta(c) = 0$. As in the finite-dimensional situation, there is a
theory of roots and weights lying in this dual space. The roots of
$\gotg$ are given by
$$\Delta = \{k \delta + \alpha_{i,j}, k \in \ZZ\} \cup \{ k \delta, k \neq 0\}.$$
Associated to the weight $\Lambda$, there is a unique irreducible
highest weight representation $(V_{\Lambda},\rho)$,
 containing a vector
$v$ such that
$$\rho(\mathfrak{gl}(n+1)\otimes\QQ[t])v = 0$$ and
$$\rho(c)v = v$$
This is known as the {\it basic representation} of $\gotg$.

We have an embedding of Lie algebras of
the Heisenberg algebra $\mathcal{H}$ associated to $\An$
$$\mathcal{H} \rightarrow \gotg \otimes \QQ(t_1,t_2)$$
given by the map
\begin{gather*}
\pp_{-k}(1)\mapsto \mathrm{Id}(-k), \quad
\pp_{k}(1)\mapsto -\mathrm{Id}(k)/((n+1)^2t_1t_2),\quad k>0,\\
\pp_k(E_i)\mapsto e_{i,i}(k)- e_{i+1,i+1}(k),\quad c\mapsto 1.
\end{gather*}
 This embedding is compatible with the identification of $H_{2}(\An, \ZZ)$
 with the finite
 $A_{n}$ root lattice.  We identify $\mathcal{H}$ with
its image inside $\gotg\otimes \QQ(t_1,t_2)$.

Rather than study the full representation $V_{\Lambda}$, we will
work with the subspace
$$W = \bigoplus_{m\geq 0} V_{\Lambda}[\Lambda - m\delta]$$
where $V[\alpha]$ denotes the weight space of $V$ associated to
the weight $\alpha$. By construction, $W$ is graded by
non-negative integers.  Moreover, after extension of scalars, the
space $W \otimes \QQ(t_1,t_2)$ is preserved by the action of
$\mathcal{H}$.  We then have the following easy observation.

\begin{prop}
There is an isomorphism of $\mathcal{H}$-modules
$$W \otimes \QQ(t_1,t_2) \rightarrow \mathcal{F}_{\An} \otimes \QQ(t_1,t_2) $$
uniquely specified by requiring $v$ to map to $v_{\emptyset}$.
\end{prop}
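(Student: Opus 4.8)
The plan is to construct the inverse isomorphism explicitly. I would define $\phi\colon\mathcal{F}_{\An}\otimes\QQ(t_1,t_2)\to W\otimes\QQ(t_1,t_2)$ by $v_{\emptyset}\mapsto v$ and, on a Nakajima basis element, $\prod_k\pp_{-\mu^{(k)}}(\gamma_{i_k})v_{\emptyset}\mapsto\prod_k\pp_{-\mu^{(k)}}(\gamma_{i_k})v$. Once $\phi$ is seen to be a well-defined $\mathcal{H}$-module isomorphism, its inverse is the map asserted in the proposition, and uniqueness is automatic since $v_{\emptyset}$ generates the source.

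First I would check that $v$ is a highest-weight vector for $\mathcal{H}$: under the embedding the annihilation operators $\pp_k(1)$ and $\pp_k(E_i)$ with $k>0$ are sent into $\mathfrak{gl}(n+1)\otimes t\QQ[t]\subset\mathfrak{gl}(n+1)\otimes\QQ[t]$, which kills $v$, while $c$ acts as the identity on $v$ and hence on all of $W$. This makes $\phi$ well defined (the creation operators $\pp_{-k}(\gamma)$, $k>0$, satisfy only the relation that they commute, and their images lie in the abelian subalgebra $\bigoplus_{k<0}\mathfrak{h}(k)$ of $\gotg$), and an induction on degree using the Heisenberg relations, with $\pp_k(\gamma)v=0$ as base case, shows that $\phi$ is $\mathcal{H}$-linear. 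A small preliminary is that the displayed assignment is a Lie algebra map at all, which reduces to trace bookkeeping: $\mathrm{Id}$ and the coroots $e_{ii}-e_{i+1,i+1}$ span the diagonal $\mathfrak{h}$ and commute, so every bracket $[\pp_k(\cdot),\pp_l(\cdot)]$ collapses to its central term, and $\mathrm{tr}\bigl((e_{ii}-e_{i+1,i+1})(e_{jj}-e_{j+1,j+1})\bigr)=-\langle E_i,E_j\rangle$, $\mathrm{tr}\bigl(\mathrm{Id}(e_{ii}-e_{i+1,i+1})\bigr)=0=\langle 1,E_i\rangle$, $\mathrm{tr}(\mathrm{Id}^2)=n+1$ reproduce the Poincar\'e pairings, the last after the normalization in $\pp_k(1)\mapsto-\mathrm{Id}(k)/((n+1)^2t_1t_2)$ and the equivariant localization identity $\langle 1,1\rangle=1/((n+1)t_1t_2)$ on $\An$. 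The grading matches as well, since $\pp_{-k}(\gamma)$ raises degree by $k$ while $\mathfrak{h}(-k)$ lowers the $d$-eigenvalue by $k$ without changing the $\mathfrak{h}^{\ast}$-component of a weight, carrying $V_\Lambda[\Lambda-m\delta]$ to $V_\Lambda[\Lambda-(m+k)\delta]$; this also re-confirms that $\mathcal{H}$ preserves $W\otimes\QQ(t_1,t_2)$ and that $\phi$ is graded.

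Injectivity of $\phi$ is immediate: over the fraction field the equivariant Poincar\'e pairing is nondegenerate, so $\mathcal{H}$ is an ordinary Heisenberg algebra with $c$ acting by the nonzero scalar $1$, and its Fock representation $\mathcal{F}_{\An}\otimes\QQ(t_1,t_2)$ is irreducible; since $\phi(v_{\emptyset})=v\neq 0$, $\ker\phi=0$. For surjectivity, both sides are graded with finite-dimensional components, so it suffices to match graded dimensions. The degree-$m$ part of the source has dimension equal to the number of partitions of $m$ into parts carrying one of $n+1$ colors, with generating series $\prod_{k\geq 1}(1-q^k)^{-(n+1)}$, since $\HH_T^{\ast}(\An,\QQ)$ has rank $n+1$; the degree-$m$ part of the target is $V_\Lambda[\Lambda-m\delta]$, and
$$\sum_{m\geq 0}\dim V_\Lambda[\Lambda-m\delta]\,q^m=\prod_{k\geq 1}(1-q^k)^{-(n+1)}$$
is precisely the content of the homogeneous (Frenkel--Kac--Segal) realization of the basic representation: writing $\mathfrak{gl}(n+1)=\mathfrak{sl}(n+1)\oplus\QQ\,\mathrm{Id}$, the $\widehat{\mathfrak{sl}}(n+1)$ factor contributes the zeroth string function $\prod_k(1-q^k)^{-n}$ on the zero finite-weight subspace and the central loop of $\mathrm{Id}$ contributes an extra $\prod_k(1-q^k)^{-1}$. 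Equivalently, under $V_\Lambda\cong S\bigl(\mathfrak{h}\otimes t^{-1}\QQ[t^{-1}]\bigr)\otimes\QQ[Q]$ the subspace $W$ is exactly the summand $S\bigl(\mathfrak{h}\otimes t^{-1}\QQ[t^{-1}]\bigr)\otimes e^{0}=U(\mathcal{H}_{-})\,v$, which is the image of $\phi$. Hence $\phi$ is an isomorphism.

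The step I expect to be the main obstacle -- indeed the only genuinely non-formal one -- is this last input: the structure of the basic representation of $\widehat{\mathfrak{gl}}(n+1)$ over its homogeneous Heisenberg subalgebra, equivalently the fact that cutting down to the weights $\Lambda-m\delta$ isolates a single copy of the Fock space. Granting it (it is a direct transcription of the homogeneous vertex operator construction into the normalization used here, which is why the authors call the proposition an ``easy observation''), the rest is bookkeeping. I would also stress that the statement only becomes true after inverting: over $R$ or $\QQ[t_1,t_2]$ the pairing degenerates along $t_1+t_2=0$, and the two modules need not be isomorphic there.
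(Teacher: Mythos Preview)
Your proof is correct. The paper does not actually give a proof of this proposition; it is stated as an ``easy observation'' and left to the reader, so there is nothing to compare your argument against. Your approach---verifying that the embedding of $\mathcal{H}$ into $\gotg\otimes\QQ(t_1,t_2)$ respects the commutation relations, that $v$ is a vacuum vector for $\mathcal{H}$, and then matching graded dimensions via the homogeneous (Frenkel--Kac--Segal) realization of the basic representation---is exactly the standard route and is what the authors presumably have in mind.

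One small correction to your closing remark: the Poincar\'e pairing does \emph{not} degenerate along $t_1+t_2=0$; the paper explicitly notes that a basis can be chosen so that no $(t_1+t_2)$ denominators appear, and indeed remarks just after the proposition that everything is defined over $R=\QQ[t_1,t_2]_{(t_1+t_2)}$. The denominators that force passage to the fraction field come from the tangent weights $w_L^i w_R^i$ (e.g.\ in $\langle 1,1\rangle$), not from $t_1+t_2$.
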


Although we will not need it, Nakajima has identified the
entire basic representation of $\gotg$ with the cohomology of
various moduli spaces.
Also, note that while we have extended scalars to $\QQ(t_1,t_2)$, all objects and maps
given here are again defined over $R = \QQ[t_1,t_2]_{(t_1+t_2)}$.

\subsection{Quantum cohomology}\label{endofgeneral}

Let $\beta \in H_{2}(\Hb_{m}(\An), \ZZ)$ be a curve class. Given
cohomology classes $\overrightarrow{\mu}_{1}, \dots,
\overrightarrow{\mu}_{k} \in H_{T}^{\ast}(\Hb_{m}(\An),\QQ)$, the
$k$-point genus $0$ Gromov-Witten invariant is defined by
integration agains the $T$-equivariant virtual class of the moduli
space of genus $0$ stable maps $$ \langle
\overrightarrow{\mu_{1}}, \dots,
\overrightarrow{\mu_{k}}\rangle^{\Hb}_{0,k,\beta} =
\int_{[\Mbar_{0,k}(\Hb(\An),\beta]^{\mathrm{vir}}}
\ev_{1}^{\ast}\overrightarrow{\mu_{1}} \dots
\ev_{k}^{\ast}\overrightarrow{\mu_{k}}
$$
where $\ev_{i}:\Mbar_{0,k}(\Hb(\An),\beta) \rightarrow \Hb(\An)$
denote evaluation maps.  We can then encode these invariants in
the $k$-point generating function
$$
\langle \overrightarrow{\mu_{1}}, \dots,
\overrightarrow{\mu_{k}}\rangle^{\Hb} = \sum_{\beta} \langle
\overrightarrow{\mu_{1}}, \dots,
\overrightarrow{\mu_{k}}\rangle^{\Hb}_{0,k,\beta} q^{D\cdot \beta}
\prod_{i=1}^{n}s_{i}^{(1,\omega_{i})\cdot\beta} \in
\QQ(t_1,t_2)((q))[[s_1,\dots,s_n]]$$ where again $D$ and
$(1,\omega_{i})$ are the basis of divisors on $\Hb(\An)$ described
earlier.

Since $\An$ is noncompact, some care is required to define the
above integrals rigorously.  Although
$\Mbar_{0,k}(\Hb(\An),\beta)$ may be noncompact, its $T$-fixed
locus is necessarily compact.  In this case, we can define the
above integral by the pushforward of its equivariant residue.  In
the compact case, this agrees with the original definition by the
virtual localization formula.

In the case of $k=3$, we can define the quantum product on
$$QH_{T}(\Hb_{m}(\An)) = H_{T}^{\ast}(\Hb_{m}(\An),\QQ)\otimes
\QQ(t_1,t_2)((q))[[s_1,\dots,s_n]]$$ as follows.  Given
$\overrightarrow{\mu}, \overrightarrow{\nu},
\overrightarrow{\rho}$, the quantum product $\circ$ is defined
using the structure constants
$$\langle \overrightarrow{\mu}|
\overrightarrow{\rho}\circ\overrightarrow{\nu}\rangle = \langle
\overrightarrow{\mu},
\overrightarrow{\rho},\overrightarrow{\nu}\rangle^{\Hb} \in
\QQ(t_1,t_2)((q))[[s_1,\dots,s_n]].$$ The brackets on the
left-hand side of the above equation denote the inner product on
Fock space.  It is a standard fact that these structure constants
define a ring deformation of the classical equivariant cohomology
ring.

\subsection{Operator formulas}

The main object of study in this paper are the two-point genus $0$ invariants
for $\Hb(\An)$.  These can be encoded in an operator on Fock space; more precisely,  we define the operator $\Theta(q,s_1,\dots,s_n)$ by the equality
\begin{equation}\label{Op Theta}
\langle \overrightarrow{\mu}|\Theta(q,s_1,\dots,s_n)|\overrightarrow{\nu}\rangle
=\langle \overrightarrow{\mu}, \overrightarrow{\nu}\rangle^{\Hb}.
\end{equation} 
In the left-hand side, the angle brackets refer to the inner
product on Fock space induced by Poincare duality.

Consider the following function of
$q,s_1, \dots,s_n$ with coefficients in $U(\gotg)$:
\begin{gather*}
\Omega_{+}:=\sum_{1\le i<j\le n+1}\sum_{k\in\ZZ} :e_{ji}(k)
e_{ij}(-k):\log(1-(-q)^k s_i\dots s_{j-1})
\end{gather*}
In this expression, we use the normal ordering shorthand where
$$
:e_{ji}(k) e_{ij}(-k):=\left\{ \begin{aligned}
e_{ji}(k)e_{ij}(-k),& \quad k<0 \textrm{ or } k=0, i<j \\
e_{ij}(-k) e_{ji}(k),& \quad\textrm{otherwise}.
\end{aligned}
\right.
$$
Moreover, we expand the logarithms so the Taylor expansion has
nonnegative exponents in the $s$ variables.

A priori, the above expression defines an operator on the entire
basic representation $V_{\Lambda}$. However, a direct calculation
shows that each summand $:e_{ij}(k)e_{ji}(-k):$ commutes with
elements of the Cartan subalgebra $\hoth$.  As a result,
$\Omega_{+}$ preserves each weight subspace of $V_{\Lambda}$ and
in particular descends to a well-defined operator on each graded
component
$$V_{\Lambda}[\Lambda-m\delta]\otimes\QQ(t_1,t_2) = H_{T}^{\ast}(\Hb_m(\An),\QQ).$$
Moreover, for each graded piece, it is clear that only finitely
many summands in $\Omega_{+}$ contribute.

We also have the following operator
$$\Omega_{0} = -\sum_{k\geq 1} \left[(n+1)t_{1}t_{2}\pp_{-k}(1)\pp_{k}(1) +
\sum_{i=1}^{n} \pp_{-k}({E_{i}})
\pp_{k}(\omega_{i})\right]\log\left( \frac{1-(-q)^k}{1-(-q)}   \right)$$ and the sum
$$\Omega = \Omega_{0} + \Omega_{+}.$$
The expression for $\Omega_{0}$ is already written in terms of
Heisenberg operators, so it obviously acts on Fock space.

Our main theorem is that $\Omega(q,s_1,\dots,s_n)$ is essentially the operator
encoding two-point Gromov-Witten invariants for $\Hb(\An)$.

\begin{thm}\label{theorem1}
The generating function of two-point invariants of $\Hb_{m}(\An)$
is given by the following operator equality
$$\Theta(q,s_1,\dots,s_n) = (t_1+t_2)\cdot \Omega(q,s_1,\dots,s_n).$$
In terms of two-point invariants, we have
$$\langle \overrightarrow{\mu}, \overrightarrow{\nu}\rangle^{\Hb} =
(t_1+t_2) \langle \overrightarrow{\mu}| \Omega(q,s_{1},\dots,s_{n})|
\overrightarrow{\nu}\rangle.
$$
\end{thm}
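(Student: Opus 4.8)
The plan is to follow the strategy of \cite{okpanhilb} for $\CC^2$: reduce the two-point series to a relative Gromov--Witten series of the threefold $\An\times\mathbf{P}^1$, evaluate that series via the known Gromov--Witten/Donaldson--Thomas theory of $\An\times\mathbf{P}^1$, and recognize the answer as the matrix of $\Omega$.

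\textbf{Step 1: reduction to a relative threefold.} First I would identify $\Theta$ with an equivariant relative Gromov--Witten series of $X=\An\times\mathbf{P}^1$ relative to the two fibers $\An\times\{0\}$ and $\An\times\{\infty\}$. As in the $\CC^2$ case, a genus $0$ two-pointed stable map $f\colon C\to\Hb_m(\An)$ pulls the universal subscheme back to a length-$m$ family over $C$, which together with a degree-$d$ map $C\to\mathbf{P}^1$ sending the marked points to $0,\infty$ produces a stable relative map to $X$; conversely, such relative maps recover two-pointed maps to the Hilbert scheme. Under this dictionary a class $\beta$ with $D\cdot\beta=d$ and $(1,\omega_i)\cdot\beta=b_i$ corresponds to $d[\mathbf{P}^1]+\sum b_iE_i$ on $X$, so the coefficient of $q^d\prod s_i^{b_i}$ in $\langle\overrightarrow{\mu},\overrightarrow{\nu}\rangle^{\Hb}$ equals the relative invariant of $X$ in that class with relative insertions $\overrightarrow{\mu}$ over $0$ and $\overrightarrow{\nu}$ over $\infty$; the noncompactness of $\An$ is handled by the same $T$-equivariant residues used to define the Hilbert scheme invariants.

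\textbf{Step 2: evaluating the threefold series.} Next I would invoke the Gromov--Witten and Donaldson--Thomas theories of $X$, computed in \cite{gwan} and \cite{dtan} and shown there to agree under $q=-e^{iu}$. The Donaldson--Thomas side is computed by $T$-equivariant localization on Hilbert schemes of curves in $X$: the fixed loci are monomial configurations along the toric skeleton, with the interesting contributions coming from the $\mathbf{P}^1$-edge over each $T$-fixed point of $\An$ and from the exceptional curves $E_i$. Summing the edge contributions produces the factors $\log\!\big(1-(-q)^ks_i\cdots s_{j-1}\big)$, the monomial $s_i\cdots s_{j-1}$ recording which exceptional curves a configuration bridges; which fixed points of $\An$ are joined is bookkept by the matrix units $e_{ij}(k)$ on the basic representation, and the normal ordering distinguishes incoming from outgoing boxes, yielding $\Omega_+$. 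The contributions of curves contracted to a point of $\An$ but wrapping $\mathbf{P}^1$ are governed by the local $\CC^2\times\mathbf{P}^1$ computation of \cite{localcurves}; summed over the $n+1$ fixed points with the equivariant normalization $(n+1)t_1t_2$ and the pairing $\langle E_i,\omega_j\rangle=\delta_{ij}$, they assemble into the Heisenberg quadratic $\Omega_0$.

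\textbf{Step 3: recognizing the operator, and the main obstacle.} Using the $\mathcal{H}$-module isomorphism $W\otimes\QQ(t_1,t_2)\cong\mathcal{F}_{\An}\otimes\QQ(t_1,t_2)$ of the Proposition, the bilinear form computed in Step 2 is exactly $\langle\overrightarrow{\mu}|(\Omega_0+\Omega_+)|\overrightarrow{\nu}\rangle$; the overall factor $t_1+t_2$ then comes from the holomorphic symplectic form on $\An$ (of $T$-weight $t_1+t_2$), whose induced cosection on $\Hb_m(\An)$ forces the reduced virtual class for every nonzero curve class, so that $\langle\overrightarrow{\mu},\overrightarrow{\nu}\rangle^{\Hb}$ is divisible by $t_1+t_2$ with quotient computed by $\Omega$. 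The crux of the argument is Step 1 together with the convention-matching in Step 2: one must establish rigorously the degeneration/gluing identity equating the descendent-free two-point series of the noncompact $\Hb_m(\An)$ with the relative series of $\An\times\mathbf{P}^1$ --- including compatibility of equivariant residues on noncompact fixed loci, the precise matching of $q,s_i$ with curve classes, and the vanishing of all contributions beyond those resummed in the logarithms --- and then carry the $(-q)^k$ signs and the normal-ordering conventions faithfully through the Gromov--Witten/Donaldson--Thomas correspondence so that the Donaldson--Thomas answer of \cite{dtan} collapses to precisely the normal-ordered bilinear $\Omega_+$ rather than a superficially different $\widehat{\mathfrak{gl}}(n+1)$ expression.
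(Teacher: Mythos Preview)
Your proposal has a genuine gap in Step~1, and the overall logic is inverted relative to the paper.

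The direct geometric identification you sketch in Step~1 does not exist in the form you describe. A two-pointed genus-$0$ map $f\colon C\to\Hb_m(\An)$ does not come equipped with any ``degree-$d$ map $C\to\mathbf{P}^1$'': the integer $d=D\cdot\beta$ is just an intersection number (and can be negative), not the degree of an auxiliary map. More seriously, the relative Gromov--Witten series of $\An\times\mathbf{P}^1$ in \cite{gwan} is an \emph{all-genus} theory in the variable $u$, whereas on the Hilbert-scheme side you are only looking at genus~$0$; there is no bijection of moduli spaces underlying the substitution $q=-e^{iu}$. The paper is explicit that the Hilbert/threefold correspondence fails for a general surface and is a special feature here tied to the holomorphic symplectic form, so it cannot be obtained by a universal moduli-space argument. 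Finally, you invoke \cite{dtan} as an input, but in this paper that is described as forthcoming work; the Donaldson--Thomas computation is not available to you as a black box.

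The paper proceeds in the opposite direction. It computes $\Theta$ intrinsically on the Hilbert scheme: the reduced virtual class gives the $(t_1+t_2)$ factor and a factorization removing parts labelled by $1$ (Proposition~\ref{factorization}); a $T^{\pm}$-localization argument shows that only \emph{unbroken} chains of $T$-orbits contribute modulo $(t_1+t_2)^2$, which yields the vanishing of Proposition~\ref{vanishing}; these two constraints drive an inductive algorithm (Propositions~\ref{fixedlemma1}--\ref{fixedlemma3}) reducing everything to two explicit fixed-point correlators, evaluated by hand. Section~\ref{operatorcalculations} then checks that $(t_1+t_2)\Omega_+$ satisfies the identical constraints and matches on the base cases, forcing $\Theta_+=(t_1+t_2)\Omega_+$; the punctual piece is pulled back from the $\CC^2$ computation of \cite{okpanhilb}. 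Only \emph{after} Theorem~\ref{theorem1} is established does the paper compare formulas with \cite{gwan} to obtain the threefold correspondence for divisor insertions --- and the full three-point comparison remains conditional on the nondegeneracy conjecture of Section~\ref{gener}. So the correspondence you want to use as a lemma is in fact a (partial) corollary.
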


As a corollary, we have the following operator expressions for
quantum multiplication by the divisors $D$ and $(1,\omega_i)$. Let
$$M_{D}, M_{(1,\omega_{1})}, \dots, M_{(1,\omega_{n})}$$
denote the operators on $\bigoplus_{m}
QH_{T}^{\ast}(\Hb_{m}(\An))$. Let $M_{D}^{cl},
M_{(1,\omega_{i})}^{cl}$ denote the operators for classical
multiplication.

\begin{cor}\label{div mult}
We have the equality
 \begin{gather*}
M_{D} = M_{D}^{cl} + (t_1+t_2) q \frac{d}{dq}\Omega(q,s_1,\dots,s_n)\\
M_{(1,\omega_{i})} = M_{(1,\omega_{i})}^{cl} +
(t_1+t_2)s_{i}\frac{d}{ds_{i}} \Omega_{+}(q,s_1,\dots,s_n)
\end{gather*}
\end{cor}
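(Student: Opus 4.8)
The plan is to derive the divisor operators directly from Theorem \ref{theorem1} by the standard procedure that extracts the quantum product by a divisor class from the two-point function. Concretely, for a divisor $\delta \in H_T^2(\Hb_m(\An))$, the three-point invariant $\langle \overrightarrow{\mu}, \delta, \overrightarrow{\nu}\rangle^{\Hb}_{0,3,\beta}$ is obtained from the two-point invariant $\langle \overrightarrow{\mu}, \overrightarrow{\nu}\rangle^{\Hb}_{0,2,\beta}$ by the divisor equation: inserting $\delta$ multiplies the $\beta$-term by $\delta\cdot\beta$, provided $\beta \ne 0$ (the $\beta=0$ term contributes the classical product). Translating through the generating-function bookkeeping, multiplying the coefficient of $q^{D\cdot\beta}\prod s_i^{(1,\omega_i)\cdot\beta}$ by $D\cdot\beta$ is exactly the effect of applying $q\tfrac{d}{dq}$, and multiplying by $(1,\omega_i)\cdot\beta$ is the effect of applying $s_i\tfrac{d}{ds_i}$. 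Combining this with $\langle \overrightarrow{\mu}, \overrightarrow{\nu}\rangle^{\Hb} = (t_1+t_2)\langle \overrightarrow{\mu}|\Omega|\overrightarrow{\nu}\rangle$ and the definition $\langle \overrightarrow{\mu}|\delta\circ\overrightarrow{\nu}\rangle = \langle\overrightarrow{\mu},\delta,\overrightarrow{\nu}\rangle^{\Hb}$ gives the stated formulas, once one checks that the classical term is separated out correctly and that differentiation sees only the quantum (positive-degree) part.

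The two differentiation operators treat the $\beta=0$ locus differently, and this accounts for the asymmetry between the two lines of the corollary. Since $\Omega_0$ depends on $q$ but not on the $s_i$, and $\Omega_+$ is the only part carrying $s$-dependence, applying $s_i\tfrac{d}{ds_i}$ annihilates $\Omega_0$ automatically, so the second formula only involves $\Omega_+$. For the first formula, one must observe that the $q^0$-coefficient of $\Omega$ contributes nothing to $q\tfrac{d}{dq}\Omega$, so applying $q\tfrac{d}{dq}$ to the full $\Omega$ correctly isolates the purely quantum corrections to $M_D$; the remaining classical piece is $M_D^{cl}$ by definition of the quantum deformation. A small point to verify here is that the operator $D\circ(-)$ acting on the finite-dimensional cohomology $H_T^*(\Hb_m(\An))$ matches, weight-space by weight-space, the restriction of $q\tfrac{d}{dq}\Omega$; this follows since $\Omega_+$ preserves weight spaces (as noted in the text after the definition of $\Omega_+$) and only finitely many summands contribute in each graded piece, so the derivatives are well-defined term by term.

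The main obstacle is not the formal manipulation but the divisor equation itself in this noncompact, virtual setting: one needs the three-point function with a divisor insertion to reduce to $(\delta\cdot\beta)$ times the two-point function. This requires that the divisor classes $D$ and $(1,\omega_i)$ be pulled back appropriately along the forgetful map $\Mbar_{0,3}(\Hb(\An),\beta)\to\Mbar_{0,2}(\Hb(\An),\beta)$, and that the $T$-equivariant virtual class and the residue-pushforward definition of the integral (used because $\Hb(\An)$ is noncompact) be compatible with this pullback — i.e.\ that the projection formula holds for the $T$-fixed loci, which are compact. I would justify this by working entirely on the compact $T$-fixed locus and invoking the virtual localization formula, exactly as the excerpt does when defining the invariants. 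Once the divisor equation is in hand, the rest is the bookkeeping described above; the degenerate cases (unstable $\beta=0$ with too few marked points) contribute only the classical term and are handled by the usual conventions.
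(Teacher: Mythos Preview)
Your proposal is correct and follows exactly the same approach as the paper: the paper's proof is the single sentence ``This follows immediately from the divisor equation $\langle \overrightarrow{\mu},D,\overrightarrow{\nu}\rangle_{\beta} = (D\cdot \beta)\langle \overrightarrow{\mu},\overrightarrow{\nu}\rangle_{\beta}$ for $\beta \ne 0$,'' and you have simply spelled out in detail the bookkeeping (logarithmic derivatives extract the degree factors, $\Omega_0$ is $s$-independent, the $\beta=0$ term gives the classical piece) and the technical justification for the divisor equation in the equivariant noncompact setting.
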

\begin{proof}
This follows immediately from the divisor equation
$$\langle \overrightarrow{\mu},D,\overrightarrow{\nu}\rangle_{\beta} = (D\cdot \beta)
\langle \overrightarrow{\mu},\overrightarrow{\nu}\rangle_{\beta}$$
for $\beta \ne 0$.
\end{proof}

\subsection{Gromov-Witten theory of $\An\times\mathbf{P}^1$}

Consider the projective line $\mathbf{P}^1$ with three
distinguished marked points $0,1,\infty$.
 The $T$-equivariant
Gromov-Witten theory of $\An\times\mathbf{P}^{1}$ relative to the
fibers over $0,1,\infty$ has been studied in
\cite{gwan}.
 If we study curve classes of degree $m>0$
over the base $\mathbf{P}^{1}$, then relative conditions at each
fiber are given by cohomology-weighted partitions of $m$
$$\overrightarrow{\mu},\overrightarrow{\nu},\overrightarrow{\rho}.$$
The relative Gromov-Witten theory of $\An\times\mathbf{P}^{1}$ is
encoded by the generating funciton
$$\mathsf{Z}^{\prime}_{GW}(\An\times\mathbf{P}^1)_{\overrightarrow{\mu},\overrightarrow{\nu},\overrightarrow{\rho}} \in \QQ(t_1,t_2)((u))[[s_{1}, \dots,
s_{n}]].$$ The reader should see \cite{gwan} for an
explanation of this generating function; the $u$-variable encodes
the genus of the domain curve while the variables $s_{i}$ again
encode the degree of the curve classes with respect to the
divisors $\omega_{i}$ on the $\An$ surface.

By comparing our formulas with those of \cite{gwan}, we
will prove the following precise version of the
Gromov-Witten/Hilbert correspondence discussed in the
Introduction.

\begin{thm}
Under the variable substitution $q = -e^{iu}$, we have
\begin{align*}
(-1)^{m}\langle\overrightarrow{\mu},\overrightarrow{\nu},\overrightarrow{\rho}\rangle^{\mathrm{Hilb}}_{\An}
=
(-iu)^{-m+l(\mu)+l(\nu)+l(\rho)}\mathsf{Z}^{\prime}(\An\times\mathbf{P}^1)_{\overrightarrow{\mu},\overrightarrow{\nu},\overrightarrow{\rho}}
\end{align*}
for $\overrightarrow{\nu} = (2)$ and $\overrightarrow{\nu} = (1,\omega_{i})$.
\end{thm}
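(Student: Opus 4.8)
The plan is to establish the identity by reducing each side to an explicit operator on the basic representation of $\widehat{\mathfrak{gl}}(n+1)$ and matching the two operators termwise under the substitution $q=-e^{iu}$. The restriction to $\overrightarrow{\nu}\in\{(2),(1,\omega_i)\}$ is what makes the reduction possible: these are precisely the divisor classes, so on the Hilbert side the divisor equation applies, and on the $\An\times\mathbf{P}^1$ side these are the relative conditions for which \cite{gwan} supplies a closed operator formula; for a general middle insertion one would instead have to invoke the reconstruction of Section~\ref{gener}.

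On the Hilbert scheme side the reduction is immediate from what has been proved already. Since $\overrightarrow{\nu}$ is $(2)$ (proportional to the boundary divisor $D$) or $(1,\omega_i)$, the divisor equation used in Corollary~\ref{div mult} gives
$$\langle\overrightarrow{\mu},\overrightarrow{\nu},\overrightarrow{\rho}\rangle^{\mathrm{Hilb}}_{\An}=\langle\overrightarrow{\mu},\overrightarrow{\nu},\overrightarrow{\rho}\rangle_{\beta=0}+(t_1+t_2)\,\big\langle\overrightarrow{\mu}\big|\,\mathcal{D}_{\overrightarrow{\nu}}\,\Omega\,\big|\overrightarrow{\rho}\big\rangle,$$
where $\mathcal{D}_{(2)}$ is a scalar multiple of $q\frac{d}{dq}$ and $\mathcal{D}_{(1,\omega_i)}=s_i\frac{d}{ds_i}$ acts on $\Omega_+$; here Theorem~\ref{theorem1} identifies the two-point function with $(t_1+t_2)\Omega$. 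So the left-hand side is controlled by the logarithmic derivatives of the coefficient functions $\log(1-(-q)^ks_i\cdots s_{j-1})$ and $\log\big((1-(-q)^k)/(1-(-q))\big)$ of $\Omega$; in particular these derivatives are rational in $q$, so the substitution $q=-e^{iu}$ is unproblematic even though $-e^{iu}$ is not a small parameter.

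Next I would import from \cite{gwan} the corresponding operator formula for $\mathsf{Z}'(\An\times\mathbf{P}^1)_{\overrightarrow{\mu},\overrightarrow{\nu},\overrightarrow{\rho}}$. There the relative theory of $\An\times\mathbf{P}^1$ is assembled by the degeneration formula from cap and tube contributions expressed in the same $\widehat{\mathfrak{gl}}(n+1)$ formalism, and for a relative condition $(2)$ or $(1,\omega_i)$ at the third fiber one again obtains an explicit logarithmic derivative of the two-fiber tube, whose coefficients are the trigonometric functions obtained from those of $\Omega$ by $(-q)^k\mapsto e^{iku}$. The theorem then becomes the formal assertion that the Hilbert-side operator, under $q=-e^{iu}$, equals the $\An\times\mathbf{P}^1$-side operator after multiplying by $(-1)^m(-iu)^{-m+l(\mu)+l(\nu)+l(\rho)}$: the factor $(t_1+t_2)$ is common to both sides; the power of $-iu$ is the standard dimension-shift factor of the Gromov--Witten/Hilbert correspondence, reconciling the numerical genus-$0$ Hilbert invariants with the $u$-graded all-genus relative invariants and at the same time converting the classical triple intersection on the left into the genus-$0$ leading coefficient of $\mathsf{Z}'$ on the right; and $(-1)^m$ absorbs the residual sign coming from the normal-ordering convention and from the $(2)$-versus-$D$ comparison.

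The hard part will be the bookkeeping rather than any new geometry: the genuine geometric input --- Theorem~\ref{theorem1} and the computations of \cite{gwan} --- is already in hand, and what remains is to extract the cap and tube operators of \cite{gwan} in a form literally comparable with $\Omega$, to track precisely the powers of $u$ produced by the substitution $q=-e^{iu}$ against those absorbed into $(-iu)^{-m+l(\mu)+l(\nu)+l(\rho)}$, to pin down every sign, and to verify that the unstable contributions match --- the $\beta=0$ term on the Hilbert side against the genus-$0$ leading term and any fiber-collapsing contributions concealed in the primed partition function $\mathsf{Z}'(\An\times\mathbf{P}^1)$. Once these are aligned the identity follows termwise from the change of variables.
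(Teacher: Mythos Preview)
Your approach is correct and matches the paper's: both sides are expressed via the explicit operator $\Omega$ (on the Hilbert side, through Theorem~\ref{theorem1} and the divisor equation) and the corresponding operator from \cite{gwan} (on the relative side), and the identity then follows by direct comparison under $q=-e^{iu}$. The paper streamlines the final comparison by first invoking the root dependence and degree scaling proved both here (Proposition~\ref{degreescaling}) and in \cite{gwan}, which reduces everything to the case $n=1$ and the coefficient of $s^1$; this cuts your ``termwise bookkeeping'' down to a single explicit match of $\Omega_+(q,s)$ against the operator $\Theta^{\bullet}(u,s)$ of \cite{gwan}.
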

Moreover, the statements and proofs in \cite{gwan} were
motivation for the strategy pursued in this paper.  Again assuming
a nondegeneracy conjecture, the above statement should hold for arbitrary three-point invariants.

Just as with quantum cohomology, the three-point invariants
defined using the Gromov-Witten theory of
$\An\times\mathbf{P}^{1}$ can also be used to define a ring
deformation of the classical equivariant cohomology
$H_{T}^{\ast}(\An,\QQ)$ over $\QQ(t_1,t_2)((u))[[s_1,\dots,s_n]]$.
An equivalent version of this correspondence is the statement
that, after a transcendental change of variables, this ring
deformation is {\it explicitly} isomorphic to the quantum
deformation.

In upcoming work \cite{dtan}, we will finish the comparison of
both of these three-point invariants with the Donaldson-Thomas
theory of $\An \times\mathbf{P}^{1}$

\subsection{Overview of proof}

Our operator formulas have many striking qualitative features
which are geometrically surprising and turn out to be useful in
their proof.  First, as a function of $s_{1}, \dots, s_{n}$, we
observe that (\ref{Op Theta}) is essentially root-theoretic.  That is, the
only curve classes from $\An$ that contribute to $\Omega$ are
those corresponding to positive roots $\alpha_{ij}$ and their
multiples.  Moreover, the associated expression is essentially
independent of which root we are working with.

Second, if we fix a root $\alpha_{i,j}$ and isolate the
contribution to $\Omega$ from its multiples, the associated terms
have a logarithmic dependence on $(s_{i}\cdot\dots s_{j-1}).$  In
practice, this means that the contribution from $d\alpha_{i,j}$ is
essentially given by the contribution from $\alpha_{i,j}$.  In
terms of the divisor operators from corollary~\ref{div mult}, this implies
that their matrix entries are rational functions in
$q,s_{1},\dots, s_{n}$.  While one expects rational functions in
$q$ for theoretical reasons, the fact that these operators are
rational functions in all variables is a special feature of these
geometries.

Our proof proceeds by establishing these qualitative features 
directly and using them to algorithmically reduce the computation to a few basic cases.
We then show that both $\Theta$ and $\Omega$ satisfy the
same qualitative properties and have the same value on the
basic cases.  In section $3$, we establish some preliminary lemmas
involving reduced virtual classes.  In section $4$, we 
set up the algorithm on the geometric side for the operator $\Theta$.
In section $5$, we perform the same analysis for $\Omega$ and prove the
main theorem and its corollaries in section $6$.  In particular,
we discuss a conjecture on $\Omega$ which implies that these divisors generate
the full quantum cohomology ring.
Finally, in section $7$,
we discuss elementary properties of the quantum differential equation and give
an argument, due to Jim Bryan, for extending these formulas to $D$ and $E$ surface
resolutions.

\section{Preliminary lemmas}

In this section, we set some further notation and explain basic
properties of the reduced virtual class for $\Hb(\An)$.

\subsection{Definitions}

We first clarify the definition of two-point Gromov-Witten
invariants
$$\langle \overrightarrow{\mu}, \overrightarrow{\nu} \rangle^{\Hb}_{\beta} =  
\int_{[\Mbar_{0,2}(\Hb(\An),\beta]^{\mathrm{vir}}}
\ev_{1}^{\ast}\overrightarrow{\mu} 
\ev_{2}^{\ast}\overrightarrow{\nu}
.$$
The integrand is the virtual fundamental class on the space of
stable maps and has dimension
$$-K_{\Hb(\An)}\cdot \beta + (2n-3) +2 = 2n-1.$$

As mentioned earlier, the space of stable maps is typically
noncompact. There are two approaches to making sense of the above
expression.  The first approach, already described, is to use the
fact that its $T$-fixed locus is compact.  The equivariant residue
is defined to be the formal contribution of these fixed loci to
virtual localization.  We can take the integral to be the
pushforward to a point of these equivariant residues, which will
take values in $\QQ[t_1,t_2]_{(t_{1}+t_{2})}$ due to the denominators occuring in 
the residue expressions.

Alternately, if the insertions correspond to $T$-equivariant
cycles for which the space of maps meeting these cycles is
compact, then this integral can be replaced with one over this
incidence locus.  We then have a $T$-equivariant integral on a
compact space which is defined in the usual sense. In particular,
the associated invariant takes values in $\QQ[t_1,t_2]$. Since any
insertion can be written as a combination of these compact cycles
with coefficients in $\QQ(t_1,t_2)$, we can use this as a
definition in general.  This agrees with the first approach, by the
virtual localization formula.

The Hilbert-Chow morphism defines a map from the Hilbert scheme of
points to the symmetric product of $\An$:
$$\rho^{HC}: \Hb_{m}(\An) \rightarrow S^{m}(\An).$$
We distinguish curve classes $\beta \in H_{2}(\Hb(\An),\ZZ)$ based
on whether they are contracted by $\rho^{HC}_{\ast}$.  Curves that
are contracted by $\rho^{HC}$ will be called {\it punctual
curves}, since they parametrize subschemes with fixed support.
Noncontracted curves will be called nonpunctual curves.  It is
easy to see that a class $\beta$ is punctual if and only if
$(1,\omega_{i})\cdot\beta = 0$ for all $i$ and is effective if, in
addition, $D \cdot \beta \geq 0.$

As we shall explain later, the contribution to the two-point operator
$\Theta$ from punctual curve classes
 can be deduced from the calculations for $\Hb(\mathbb{C}^{2})$ in \cite{okpanhilb}.
For most of this paper, we will study the contribution to $\Theta$ from
non-punctual curves, denoted by
$$\langle \vec{\mu}|\Theta_{+}(q,s_{1},\dots,s_{n})| \vec{\nu} \rangle\in \QQ(t_1,t_2)((q))[[s_{1},\dots,s_{n}]].$$
Moreover, given a curve class $\alpha = c_{1}E_{1} +\dots+ c_{n}E_{n}\in
H_{2}(\An)$, we isolate the coefficient in $\Theta$ of
$s_{1}^{c_{1}}\cdot\dots\cdot s_{n}^{c_{n}}$ as
$\Theta_{\alpha}(q)$ so that the corresponding invariants are given by
$$\langle \vec{\mu}|\Theta_{\alpha}(q)|\vec{\nu}\rangle \in \QQ(t_1,t_2)((q)).$$
Notice that we are still considering all possible values of
$D\cdot \beta$, so the result is a Laurent series in $q$.  

Finally, given a curve class $\beta \in H_{2}(\Hb_{m}(\An),\ZZ)$,
we define the support of $\beta$ to be the smallest interval $1
\leq i < j \leq n$ containing the set $\{k | (1,\omega_{k})\cdot
\beta \ne 0\}$.  It will be useful to consider the contribution to $\Theta$ of
all curve classes with fixed support $[i,j]$:
$$\langle \vec{\mu}| \Theta_{[i,j]}| \vec{\nu}\rangle \in \QQ(t_1,t_2)((q))[[s_{i},\dots,
s_{j-1}]].$$

\subsection{Bases for Fock space}\label{bases}

It will be useful in our arguments to shift between different
bases for the cohomology of $\Hb_{m}(\An)$. First, as discussed in
section~\ref{Nak basis}, for every choice of basis $\{\gamma_{k}\}$ of
$H_{T}^{\ast}(\An,\QQ)$, there is an associated Nakajima basis
$\{\vec{\mu}\}$ given by partitions weighed by elements of this
basis. We will work with the Nakajima basis indexed by weighted partitions of the form
$$\mu_{1}(\omega_{1})\dots\mu_{n}(\omega_{n})\mu_{n+1}(1)$$
where $\{\mu_{i}\}$ is an $(n+1)$-tuple of partitions (or multipartition) such that $\sum |\mu_{i}|=m$

If we extend coefficients to $\QQ(t_1,t_2)$, then we have a basis
of $H_{T}^{\ast}(\An,\QQ)\otimes\QQ(t_1,t_2)$ given by fixed
points $[p_{1}],\dots, [p_{n+1}]$ and can also study its associated Nakajima
basis
$$\mu_{1}([p_{1}])\dots\mu_{n+1}([p_{n+1}]),$$
where $\{\mu_{i}\}$ is a again a multipartition of $m$.

Finally, we will also work with the basis of localized equivariant
cohomology given by $T$-fixed points on $\Hb_{m}(\An)$, which can
be described as follows. We first recall the description of
$T$-fixed subschemes of $\mathbb{C}^{2}$ under the standard torus action.
Given such a subscheme of length $m$, it must be the zero locus of
a monomial ideal
$$I_{\lambda} =(x^{\lambda_1},yx^{\lambda_2},\dots,y^{l-1}x^{\lambda_l})$$
associated to the partition $\lambda$ of $m$.

The toric surface $\An$ admits an affine cover by open sets $U_k
\cong \mathbb{C}^{2}$ centered at the fixed point $p_{k}$, where we fix
the identification with $\mathbb{C}^{2}$ so that the $x$ and $y$-axes
correspond to $E_{k-1}$ and $E_{k}$ respectively.  Given a
$T$-fixed subscheme of length $m$ on $\An$, its restriction to
each $U_{k}$ yields an associated monomial ideal and partition.
This gives a bijection between $T$-fixed subschemes of length $m$
and multipartitions $\vec{\lambda} = \{\lambda_{k}\}$
such that $\sum |\lambda_{k}| = m$. The associated cohomology
class shall be denoted by
$$[J_{\vec{\lambda}}]\in\HH_{T}^{\ast}(\Hb_m(\An))\otimes \QQ(t_1,t_2).$$

The relationship between the Nakajima basis associated to $\{p_{i}\}$
and the fixed-point basis can be described in terms of symmetric
functions as follows (see \cite{nakjack, li-qin-wang, okpanhilb}). Both of these bases are induced by the
Nakajima and fixed-point bases for $\Hb(\mathbb{C}^{2})$ under the
isomorphism
$$\mathcal{F}_{\An}\otimes\QQ(t_1,t_2) = \bigotimes_{i=1}^{n+1}
\mathcal{F}_{\mathbb{C}^{2},i}\otimes \QQ(w^{i}_{L},w^{i}_{R})$$ where
the coordinate axes on the $i$-th factor have been identified with
the tangent weights at $p_{i}$. Let $\mathrm{Symm}$ denote the
ring of symmetric functions over $\QQ(t_1,t_2)$ in countably many
variables $z_{1},z_{2},\dots.$  This ring admits an isomorphism with Fock space by
identifying $1$ with the vacuum vector $v_{\emptyset}$ and the
Heisenberg operator $\pp_{-k}([p_{i}])$ with multiplication by the
Newton symmetric polynomial $w_{R}^{i}\cdot
\mathsf{p}_{k}(z)$ where $\mathsf{p}_{k}(z) = \sum_{a} z_{a}^{k} \in
\mathrm{Symm}.$ Under this identification, the Nakajima basis element
$\mu_{i}([p_{i}])$ is idenitifed with the Newton symmetric function
$\frac{(w_{R}^{i})^{l(\mu_{i})}}{\mathfrak{z}(\mu_{i})}\mathsf{p}_{\mu_{i}}(z)$ and the normalized
fixed-point $(w_{R}^{i})^{-|\lambda|}[J_{\lambda}]$ is identified with the integral Jack
polynomial
$$\mathsf{J}_{\lambda}^{\theta}(z)$$
with parameter $\theta = w_{L}^{i}/w_{R}^{i}$. If we specialize
our equivariant weights so that $t_1+t_2 = 0$ then the Jack
polynomials written above become proportional to the associated
Schur polynomials
$$\mathsf{s}_{\lambda}(z)=(-1)^{|\lambda|}\frac{\dim\lambda}{|\lambda|! }\mathsf{J}_{\lambda}(z) \mod (t_1+t_2)$$ where $\dim \lambda$ is
the dimension of the irreducible representation of $S_{m}$
associated to $\lambda$.

To apply this to $\An$, the Nakajima basis element $\prod \mu_{i}([p_{i}])$ now corresponds to an
$(n+1)$-tuple of power-sum symmetric functions 
$\otimes_{i}
\mathsf{p}_{\mu_{i}}(z^{(i)})$ and the fixed-point
basis element $[J_{\overrightarrow{\lambda}}]$ corresponds to an $(n+1)$-tuple of Jack symmetric
polynomials $\otimes \mathsf{J}_{\lambda_{i}}(z^{(i)}).$  Under the specialization $t_{1}+t_{2}=0$, we again
are allowed to work with $(n+1)$-tuples of Schur polynomials.

\subsection{Reduced virtual classes}
We now define the reduced virtual fundamental class for
$\Mbar_{0,k}(\Hb(\An),\beta)$.  Given any variety with an
everywhere-nondegenerate holomorphic symplectic form, this form
gives rise to a trivial factor of the obstruction theory.  By
removing this trivial factor by hand, we obtain a new obstruction
theory with with virtual dimension increased by $1$.  In the case
of $\Hb(\mathbb{C}^{2})$, this construction is important in the analysis
of \cite{okpanhilb} and we will use it in much the same way. Our
discussion is based on the more detailed treatment given there.

We first explain the standard and modified obstruction theory for
a fixed domain curve $C$.  Given a fixed nodal, pointed curve $C$
of genus $0$, let $M_{C}(\Hb(\An), \beta)$ denote the moduli
space of maps from $C$ to $\Hb(\An)$ of degree $\beta \neq 0$.  The
usual perfect obstruction theory for $M_C(\Hb(\An),\beta)$ is defined
by the natural morphism
\begin{equation}\label{standardmap}
R\pi_{\ast}(\mathrm{ev}^{\ast}T_{\Hb})^{\vee} \rightarrow
L_{M_{C}},
\end{equation}
where $L_{M_{C}}$ denotes the cotangent complex of
$M_{C}(\Hb(\An),\beta)$ and
\begin{gather*}
\mathrm{ev}: C \times M_C(\Hb(\An),\beta) \rightarrow \Hb(\An),\\
\pi: C \times M_C(\Hb(\An), \beta) \rightarrow M_C(\Hb(\An),\beta).
\end{gather*}
are the evaluation and projection maps.

Let $\gamma$ denote the holomorphic symplectic form on $\An$
induced by the standard form $dx\wedge dy$ on $\mathbb{C}^2$
induces holomorphic symplectic form on $\An$ which, in turn,
induces a holomorphic symplectic form $\gamma$ on $\Hb_{m}(\An)$.
The $T$-representation $\mathbb{C}\cdot \gamma$ has weight
$-(t_1+t_2)$. Let $\omega_{\pi}$ denote the relative dualizing
sheaf.   The symplectic pairing and pullback of
differentials induces a map
$$\mathrm{ev}^{\ast}(T_{\Hb(\An)})\rightarrow
\omega_{\pi}\otimes(\mathbb{C}\gamma)^{\ast}.$$ This, in turn,
yields a map of complexes
$$R\pi_{\ast}(\omega_{\pi})^{\vee}\otimes\mathbb{C}\gamma \rightarrow
R\pi_{\ast}(\mathrm{ev}^{\ast}(T_{\Hb(\An)})^{\vee})$$ and the
truncation
$$\iota:\tau_{\leq -1}R\pi_{\ast}(\omega_{\pi})^{\vee}\otimes\mathbb{C}\gamma \rightarrow
R\pi_{\ast}(\mathrm{ev}^{\ast}(T_{\Hb(\An)})^{\vee}).$$ This truncation
is a trivial line bundle with equivariant weight $-(t_1+t_2)$.

Results of Ran and Manetti (\cite{ran,manetti}) on obstruction
theory and the semiregularity map imply the following.  First,
there is an induced map
\begin{equation}\label{reducedmap}
C(\iota) \rightarrow L_{M_{C}}
\end{equation}
where $C(\iota)$ is the mapping cone associated to $\iota$.
Second, this map (\ref{reducedmap}) satisfies the necessary
properties of a perfect obstruction theory.  This is precisely the
modified obstruction theory we use to define the reduced virtual
class.   Since all maps in this section are compatible with the
$T$-action, we have a $T$-equivariant reduced virtual class.

There is one important subtlety regarding the semiregularity
results of (\cite{ran,manetti}). In order to apply their results,
we require a compact target space.  We can embed the $\An$
singularity in a surface $S$ with a holomorphic symplectic form
that is degenerate away from the singularity.  Given a stable map
$f$, its deformation theory can be studied
 on the Hilbert scheme of the resolved surface $\widetilde{S}$ where
our curve maps entirely to the nondegenerate locus.  Theorem $9.1$
of \cite{manetti} still gives the necessary vanishing statement
for realized obstructions.

As with the standard obstruction theory (\ref{standardmap}), we
obtain the \textit{reduced}  $T$-equivariant perfect obstruction
theory on $\Mbar_{0,k}(\Hb(\An),\beta)$ by varying the domain $C$,
and studying the the relative obstruction theory over the Artin
stack $\mathfrak{M}$ of all nodal curves.  Since the new
obstruction theory differs from the standard one by the
1-dimensional obstruction space $(\mathbb{C}\gamma)^{\vee}$, we have that
the reduced virtual dimension is given by
$$1 + (2n-3) +k.$$
Furthermore we have the identity
\begin{align*}
[\Mbar_{0,k}(\Hb(\An),\beta)]^{vir}_{\mathrm{standard}}
&= c_{1}(\mathbb{C}\gamma^{\vee})[\Mbar_{0,k}(\Hb(\An),\beta)]^{\mathrm{red}}\\
&=(t_1+t_2)[\Mbar_{0,k}(\Hb(\An),\beta)]^{\mathrm{red}}
\end{align*}

We have the following lemma, whose proof is nearly identical to
that of Lemma $2$ in \cite{okpanhilb}.
\begin{lem}\label{div}
The standard $T$-equivariant Gromov-Witten invariants of
$\Hb_{m}(\An)$ with nonzero degree and with insertions from
$H_{T}^{\ast}(\Hb_{m}(\An),\QQ)$ are divisible by $(t_1+t_2)$.
\end{lem}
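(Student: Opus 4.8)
The proof follows that of Lemma~$2$ in \cite{okpanhilb}. Its input is the identity recorded above: for $\beta\neq 0$,
$$[\Mbar_{0,k}(\Hb(\An),\beta)]^{vir}=(t_1+t_2)\cdot[\Mbar_{0,k}(\Hb(\An),\beta)]^{\mathrm{red}}.$$
Pairing both sides against $\ev_1^{\ast}\overrightarrow{\mu}_1\cdots\ev_k^{\ast}\overrightarrow{\mu}_k$ gives
$$\langle\overrightarrow{\mu}_1,\dots,\overrightarrow{\mu}_k\rangle^{\Hb}_\beta=(t_1+t_2)\cdot\langle\overrightarrow{\mu}_1,\dots,\overrightarrow{\mu}_k\rangle^{\mathrm{red}}_\beta$$
as elements of $\QQ(t_1,t_2)$, where the right-hand side is the corresponding invariant of the reduced virtual class. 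So the lemma reduces to showing that the reduced invariant has no pole along $t_1+t_2=0$, i.e.\ lies in $R=\QQ[t_1,t_2]_{(t_1+t_2)}$; divisibility of the standard invariant is then immediate.

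Since cohomology-weighted partitions built from the basis $\{1,\omega_1,\dots,\omega_n\}$ form a $\QQ[t_1,t_2]$-basis of $\HH^{\ast}_T(\Hb_m(\An),\QQ)$, by $\QQ[t_1,t_2]$-linearity in each insertion it is enough to treat such Nakajima basis elements. I would then reduce once more. The localization formula writes each label $\gamma\in\{1,\omega_1,\dots,\omega_n\}$ as $\gamma=\sum_j \gamma|_{p_j}\,(w^L_j w^R_j)^{-1}[p_j]$, and inspection of the tangent weights
$$w^L_j=(n+2-j)t_1+(1-j)t_2,\qquad w^R_j=(j-n-1)t_1+jt_2$$
shows that neither is a scalar multiple of $t_1+t_2$ (this would force $n=-1$); hence each coefficient $\gamma|_{p_j}(w^L_j w^R_j)^{-1}$ already lies in $R$. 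Expanding every Heisenberg label this way, a Nakajima class in the $\{1,\omega_i\}$-basis becomes an $R$-linear combination of Nakajima classes weighted by the fixed points $p_j$, so it suffices to prove the reduced invariant lies in $R$ when all insertions are weighted by fixed points.

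For such insertions each $\overrightarrow{\mu}_i$ is supported on the closed subset $P:=\{[\xi]\in\Hb_m(\An):\operatorname{supp}\xi\subseteq E_1\cup\dots\cup E_n\}$, since the Nakajima correspondences only move support within the fixed-point set. The geometric heart of the argument, exactly as for $\CC^2$ in \cite{okpanhilb}, is that $P$ is proper and that the incidence locus
$$W:=\ev_1^{-1}(\operatorname{supp}\overrightarrow{\mu}_1)\cap\dots\cap\ev_k^{-1}(\operatorname{supp}\overrightarrow{\mu}_k)\subset\Mbar_{0,k}(\Hb(\An),\beta)$$
is proper: because $\CC^2/\ZZ_{n+1}$ is affine, every complete curve in $\Hb_m(\An)$ is contracted by the proper composite $\Hb_m(\An)\to S^m(\An)\to S^m(\CC^2/\ZZ_{n+1})$, so a stable map of class $\beta\neq 0$ with $\ev_1$ landing in $P$ parametrizes only subschemes supported over the singular point, hence lying in $P$; therefore $W$ is set-theoretically contained in the proper moduli space of stable maps of class $\beta$ into $P$. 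The reduced perfect obstruction theory restricts to $W$ (the excised trivial line bundle is irrelevant here), so $\langle\overrightarrow{\mu}_1,\dots,\overrightarrow{\mu}_k\rangle^{\mathrm{red}}_\beta$ is a genuine $T$-equivariant integral over the proper Deligne--Mumford stack $W$, hence lies in $\QQ[t_1,t_2]\subset R$. This proves the lemma.

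The step I expect to require the most care is the last one: checking that $P$ and $W$ are genuinely proper --- this is exactly where affineness of $\CC^2/\ZZ_{n+1}$ and properness of the Hilbert--Chow morphism are used --- and matching the reduced invariant defined by equivariant residue with the honest integral of the restriction of the reduced virtual class to $W$, which is the Borel--Moore homology bookkeeping on the non-compact moduli stack carried out in detail in \cite{okpanhilb}.
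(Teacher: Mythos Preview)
Your argument is correct and follows essentially the same route as the paper's own proof: reduce to Nakajima insertions in the $\{1,\omega_i\}$ basis, pass to fixed-point labels using that the change of basis has coefficients in $R$, and then invoke compactness of the incidence locus together with the reduced/standard comparison to conclude. You supply more detail than the paper does (the explicit check that $w_L^j w_R^j$ is a unit in $R$, and the properness argument via the Hilbert--Chow map to the affine quotient), but the structure is identical.
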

\begin{proof}
As these invariants take values in $\QQ(t_1,t_2)$, divisibility is
defined by valuation respect to $(t_1+t_2)$.  The cohomology
$H_{T}^{\ast}(\Hb(\An),\QQ)$ is spanned as a $\QQ[t_1,t_2]$-module
by the Nakajima basis with respect to $(1, \omega_1, \dots,
\omega_n)$, so it suffices to prove this statement with insertions
from this basis.  If we instead study insertions given by the
Nakajima basis with respect to the fixed points $[p_{1}], \dots,
[p_{n+1}]$, then by compactness these invariants lie in
$\QQ[t_1,t_2]$ and by the reduced construction are divisible by
$(t_1+t_2)$.  Since the change of basis from $\{[p_{1}], \dots,
[p_{n+1}]\}$ to $\{1, \omega_{1}, \dots, \omega_{n}\}$ does not
introduce any denominators of $(t_1+t_2)$, we are done.
\end{proof}

We denote the reduced invariants with curved brackets; we have
shown that
$$\langle \overrightarrow{\mu}, \overrightarrow{\nu}\rangle^{\Hb}_{0,2,\beta} = (t_1+t_2)\left( \vec{\mu}, \vec{\nu} \right)^{\Hb}_{0,2,\beta}.$$


\subsection{Factorization}

The first application of reduced class arguments will be studying
two point invariants in the Nakajima basis with respect to $\{1,
\omega_{1},\dots,\omega_{n}\}$. The following result shows that we
can remove any parts labelled with $1$. It is the analog of the
additivity statements for $\Hb(\mathbb{C}^{2})$ proved in Section $3.5$
of \cite{okpanhilb}.

\begin{prop}\label{factorization}
We have the factorization
$$\langle \mu(1) \prod \lambda_{i}(\omega_{i})|\Theta_{+}| \nu(1) \prod \rho_{i}(\omega_{i})\rangle
= \langle \mu(1)|\nu(1)\rangle \cdot \langle \prod
\lambda_{i}(\omega_{i})|\Theta_{+}| \prod \rho_{i}(\omega_{i})\rangle.$$
\end{prop}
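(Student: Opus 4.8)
The strategy is to reduce the statement to the corresponding additivity/factorization result for $\Hb(\mathbb{C}^2)$ proved in Section 3.5 of \cite{okpanhilb}, using the reduced virtual class. The key geometric input is that parts labelled by the identity class $1$ correspond, under the Hilbert–Chow morphism, to points whose movement is unconstrained along the noncompact surface directions; such motion decouples from the exceptional locus $E_1,\dots,E_n$ that controls the $s_i$-dependence of $\Theta_+$. Concretely, since $\Theta_+$ records only the contribution of \emph{nonpunctual} curve classes $\beta$ with nontrivial $(1,\omega_i)\cdot\beta$, and since the support of any such curve class lies in the exceptional chain, I would argue that the moduli space $\Mbar_{0,2}(\Hb(\An),\beta)$ relevant to these invariants factors (up to the usual subtleties with fixed loci) as a product of a Hilbert scheme of points moving in the exceptional directions and a trivial family accounting for the "$1$-labelled" points. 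The holomorphic symplectic form on $\Hb(\An)$ is a box product on this factorization, so the reduced obstruction theory — obtained by removing the one-dimensional trivial piece coming from $\gamma$ — is the box product of a reduced obstruction theory on the exceptional factor with the honest obstruction theory on the trivial factor.

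The steps, in order, would be: (1) Pass to reduced invariants using Lemma \ref{div} and the identity $\langle\vec\mu,\vec\nu\rangle^{\Hb}_{0,2,\beta}=(t_1+t_2)(\vec\mu,\vec\nu)^{\Hb}_{0,2,\beta}$, so that it suffices to prove the factorization for the reduced two-point function. (2) Fix a nonpunctual effective class $\beta$ and analyze its $T$-fixed stable maps; since $\beta$ has vanishing $(1,\omega_i)\cdot\beta$ exactly for the "extra" points, the maps split into a component supported near the exceptional locus and isolated (constant-in-$\Hb$) point data for the $1$-labelled parts. (3) Use the localization/virtual-class product formula to write the reduced invariant as a product of a reduced invariant on the smaller Hilbert scheme carrying $\prod\lambda_i(\omega_i)$, $\prod\rho_i(\omega_i)$ and a \emph{classical} pairing $\langle\mu(1)|\nu(1)\rangle$ on the trivial factor — here one must check that $\beta\ne 0$ forces the "extra" insertions to contribute only through the classical (Poincaré) pairing, which is where the divisor-degree bookkeeping with respect to $\{1,\omega_1,\dots,\omega_n\}$ and the cohomological-degree formula from Section \ref{Nak basis} enters. (4) Sum over $\beta$ to recover $\Theta_+$ on both sides. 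Throughout, one appeals to the $\Hb(\mathbb{C}^2)$ additivity statements of \cite{okpanhilb} applied factor-by-factor in the decomposition $\mathcal{F}_{\An}=\bigotimes_{i}\mathcal{F}_{\mathbb{C}^2,i}$, exactly as those arguments were used there for a single $\mathbb{C}^2$ chart.

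The main obstacle I anticipate is making the "factorization of moduli spaces" step rigorous in the presence of noncompactness: $\Mbar_{0,2}(\Hb(\An),\beta)$ itself does not literally split as a product, and one must work either with its $T$-fixed locus (which is compact) or with a well-chosen compact incidence locus, as discussed in Section 2.1. The cleanest route is probably to mimic \cite{okpanhilb} and argue on the level of the reduced obstruction theory restricted to $T$-fixed loci, showing that the virtual normal bundle and reduced class both respect the splitting into "exceptional-direction" data and "$1$-labelled" data; the trivial symplectic factor gets removed entirely from the second piece, turning its contribution into the classical pairing. A secondary technical point is verifying that no cross-terms arise — i.e., that a $1$-labelled part cannot share a curve component with an $\omega_i$-labelled part in a way that contributes to a nonpunctual class — which again follows from the support and degree constraints, since any curve component touching an $\omega_i$-labelled insertion contributes positively to $s_i$ while the $1$-labelled parts contribute nothing. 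Once these are in place, the proposition follows by the same bookkeeping as Proposition 3.x of \cite{okpanhilb}.
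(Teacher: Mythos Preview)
Your overall strategy---pass to reduced invariants, then argue that the $1$-labelled parts decouple from the exceptional-locus geometry---is the same as the paper's. However, the paper avoids precisely the obstacle you flag (making the moduli-space factorization rigorous via $T$-fixed loci) by a cleaner two-step trick that you do not mention.

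First, instead of working with $1$-labelled insertions directly, the paper replaces each part of $\mu$ by a fixed-point class $[p_{i_k}]$ (so the first insertion now has \emph{compact} support). A cohomological-degree count shows the resulting invariant has degree $l(\mu)-l(\nu)+1\le 1$; compactness then forces it to be a polynomial in $t_1,t_2$, and Lemma~\ref{div} forces divisibility by $t_1+t_2$. Hence it vanishes unless $l(\mu)=l(\nu)$, and in that case equals $(t_1+t_2)$ times a \emph{nonequivariant constant}.

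Second, because this reduced invariant is nonequivariant, one may replace the $[p_{i_k}]$ by honest point classes $[\xi_k]$ for \emph{generic} points $\xi_k$ lying off the exceptional locus. Now the relevant moduli space literally factors as
\[
\prod_k \Hb_{\mu_k}(\xi_k)\ \times\ Z,
\]
where $Z$ parametrizes subschemes supported on the exceptional chain. The obstruction theory splits accordingly, and the reduced-class argument (two independent trivial factors force vanishing) shows that the only contribution comes from maps contracted on each $\Hb_{\mu_k}(\xi_k)$ factor; this forces $\mu=\nu$ and yields the stated factorization.

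So the decisive idea you are missing is the degree count plus the passage to generic nonequivariant point insertions, which turns your anticipated ``factorization up to $T$-fixed-locus subtleties'' into a genuine product of moduli spaces. Your proposed route through $T$-localization and virtual normal bundles on fixed loci could perhaps be pushed through, but it is substantially more work and the paper's argument sidesteps it entirely. Also note that the relevant tensor decomposition is not the Fock-space splitting $\bigotimes_i \mathcal{F}_{\mathbb{C}^2,i}$ over torus fixed points, but the geometric splitting over the chosen generic points $\xi_k$ together with the exceptional locus.
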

\begin{proof}
In the above expression, the first factor on the right-hand side
is just the usual inner product on Fock space, equivalent to the
classical Poincare pairing.  In particular, it is nonzero if and
only if $\mu = \nu$.

We can assume $l(\mu) \leq l(\nu)$.  We consider the associated
two-point invariant obtained by ordering the parts $\mu^{(k)}$ of
$\mu$ and labelling them with fixed points $[p_{i_{k}}]$:
$$\langle \mu^{(1)}([p_{i_{1}}])\dots \mu^{(l)}([p_{i_{l}}])\cdot \prod
\lambda_{i}(\omega_{i})|\Theta_{+}|\nu(1) \prod
\rho_{i}(\omega_{i})\rangle.$$

The (halved) cohomological degree of this invariant is 
$$2l(\mu) + m - l(\mu) + m - l(\nu) - (2m -1) = l(\mu) - l(\nu) + 1\leq 1.$$
Since the first insertion has compact support, concentrated along
the exceptional locus of $\An$, it forces the invariant to be a
polynomial in $t_1$ and $t_2$ that must also be divisible by
$(t_1+t_2)$ by Lemma \ref{div}. Therefore the invariant vanishes
unless $l(\mu) = l(\nu)$, in which case it equals
$\gamma\cdot(t_1+t_2)$ where
\begin{equation}\label{red inv}
 \gamma  = \left( \mu^{(1)}([p_{i_{1}}])\dots \mu^{(l)}([p_{i_{l}}])\cdot \prod
\lambda_{i}(\omega_{i}), \nu(1) \prod
\rho_{i}(\omega_{i})\right)^{\Hb}\in \QQ
\end{equation} is the reduced invariant.
Since $\gamma$ is a nonequivariant constant, it can be evaluated
by replacing the equivariant classes $[p_{i_{k}}]$ with
nonequivariant point classes $[\xi_{k}]$ for distinct points
$x_{i_{k}} \in \An$ that do not lie on any of the exceptional
divisors $E_{i}$.

The moduli space of maps connecting the Nakajima cycle
$\mu(\xi)\prod \lambda_{i}(\omega_{i})$ to the Nakajima cycle
$\nu(1)\prod \rho_{i}(\omega_{i})$ is empty unless $\mu = \nu$. In
this case, it can be identified with the space of maps to the
product
\begin{equation}\label{prod of punct}
\prod_{k}\Hb_{\mu_{k}}(\xi_{k}) \times Z\end{equation}
where $\Hb_{\mu_{k}}(\xi_{k})$ is the punctual Hilbert scheme of
$\mu_{k}$ points supported at the point $\xi_{k}$ and $Z$ is the
subscheme of $\Hb(\An)$ consisting of subschemes supported on the
exceptional locus.  This moduli space of maps to (\ref{prod of punct}) has connected components
corresponding to the contribution of each of these factors to the
total degree of the stable map.

The only component that contributes to our invariant is
where the map to each of the punctual Hilbert schemes is
contracted. Indeed, the standard obstruction theory factors into
contributions arising from each of the factors.  Moreover, if the
component of our curve class from any of these factors is nonzero
then the arguments from last section give rise to a trivial factor
to the obstruction theory.  If two of these components are
nonzero, then we have two trivial factors and the associated
reduced invariant must also vanish. Since we are considering 
non-punctual curve classes, we know that the component of our
curve class in $Z$ must be nonzero, and therefore the domain curve
is contracted in the projection to each factor
$\Hb_{\mu_{k}}(\xi_{k})$. The reduced invariant in $\eqref{red inv}$ is
given by
$$\prod_{k} \frac{1}{\mu_{k}} \cdot \left( 
\prod \lambda_{i}(\omega_{i}),  \prod
\rho_{i}(\omega_{i})\right)^{\Hb}.$$

Finally, if we substitute
$$1 = \sum_{k} \frac{[p_{k}]}{w^{i}_{L}\cdot w^{i}_{R}}$$
for the first insertion, the statement of the proposition follows
by a direct calculation.
\end{proof}

As a consequence of this proposition, if we proceed inductively on
the number of points $m$, it suffices to determine only a certain
minor of the full two-point matrix $\Theta_{+}$.  Moreover, from
this proof, these invariants are always of the form
$\gamma(t_1+t_2)$, where $\gamma$ is the nonequivariant reduced
invariant. This allows us to make the following useful
observation. In order to calculate the two-point operator $\Theta_{+}$
precisely, it is enough to calculate the above two-point invariants
$\mod (t_1+t_2)^{2}$.  If we then work with any other of the 
bases described in section \ref{bases}, it also suffices to determine
the two-point invariants with respect to the new basis $\mod
(t_1+t_2)^{2}$.  This is true (and well-defined) since the
coefficients in the change of bases never have factors of
$(t_1+t_2)$ in the denominator.

\section{Geometric calculations}\label{geometriccalculations}

In this section, we give an inductive procedure for determining
two-point invariants in terms of a fixed number of new
calculations for each $m$.

The strategy in this section is to apply virtual localization \cite{grabpan}
with respect to the torus action.
However, while the $T$-fixed points of $\Hb(\An)$ are isolated,
the loci of $T$-fixed curves are typically positive-dimensional
and quite complicated to describe concretely. Instead we proceed
indirectly and use the observation from last section to ignore
loci that contribute excess multiples of $(t_1+t_2)$.  This allows
us to reduce the analysis to correlators for which only
zero-dimensional loci of curves contribute.

\subsection{Fixed-point correlators}

Our procedure for determining these invariants proceeds via three
intermediate propositions about two-point invariants in the fixed-point
basis. Recall that we use the subscript $[i,j]$ to isolate the
contribution of curve classes on $\An$ that are linear
combinations of $E_{i}, \dots, E_{j-1}$ with nonzero coefficients
for $E_{i}$ and $E_{j-1}$.

\begin{prop}\label{fixedlemma1}
For $m \geq 1$, an arbitrary two-point correlator
$$\langle [J_{\vec{\lambda}}]| \Theta_{[i,j]}|[J_{\vec{\pi}}]\rangle \in
\QQ[t_1,t_2]((q))[[s_{i},\dots,s_{j-1}]]$$ is congruent modulo
$(t_1+t_2)^{2}$ to a linear combination of
\begin{itemize}
\item $\langle [J_{\vec{\eta}}]|\Theta_{[i,j]}| [J_{\vec{\eta}}]\rangle$ where $\eta_{k} =
\emptyset$ for $k \ne i$
\item two-point correlators $\langle [J_{\vec{\kappa}}]|\Theta_{[i,j]}| [J_{\vec{\sigma}}]\rangle$
for $\Hb_{m^{\prime}}(\An)$ with $m^{\prime} < m$
\end{itemize}
with coefficients in $R =\QQ(t_1,t_2)_{(t_1+t_2)}$
determined by pairings in the classical equivariant cohomology of
$\Hb(\An)$.
\end{prop}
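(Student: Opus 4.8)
The plan is to run virtual localization on $\Mbar_{0,2}(\Hb(\An),\beta)$ for curve classes $\beta$ with support $[i,j]$, working throughout modulo $(t_1+t_2)^2$ as permitted by the discussion at the end of Section~\ref{bases}. The key structural input is that, after the reduced construction, each two-point invariant is already divisible by $(t_1+t_2)$ (Lemma~\ref{div}), so a single extra vanishing factor of $(t_1+t_2)$ kills a contribution modulo $(t_1+t_2)^2$. Concretely, I would stratify the $T$-fixed locus of the space of stable maps by the combinatorial type of the domain and the way the nonpunctual degree $\beta$ is distributed. For a fixed $T$-fixed stable map, the domain is a chain/tree of rational components; each component either carries a nonzero multiple of some $E_\ell$ (a "moving" component) or is contracted to a $T$-fixed point $J_{\vec\eta}$ of $\Hb(\An)$. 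I would argue, using the factorization of the reduced obstruction theory exactly as in the proof of Proposition~\ref{factorization}, that any configuration in which the curve class is "split" nontrivially between two separate exceptional directions — or more precisely, any configuration whose normalization decomposes the problem into two independent nonzero-degree subproblems — contributes an extra holomorphic symplectic trivial factor and hence vanishes mod $(t_1+t_2)^2$. This forces the surviving fixed loci to be of a very restricted shape.

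Next I would analyze the surviving loci. Because the support is the full interval $[i,j]$, the curve must "reach" from a neighborhood of $E_i$ to a neighborhood of $E_{j-1}$; by the connectedness of the domain and the vanishing just described, the surviving contributions come from a single chain along the exceptional locus, with the two marked points landing on the two ends. The endpoint behavior is where the two cases in the statement come from. If a marked point lies on a component that is contracted to a fixed point $J_{\vec\sigma}$ with $\sigma_k\neq\emptyset$ for some $k$, then that contracted component can be "split off": the node where it attaches lets us write the correlator, via the usual gluing/node-smoothing argument in localization together with the classical multiplication in $QH_T$, as a classical equivariant pairing times a two-point correlator on a Hilbert scheme of fewer points $m'<m$ — this produces the second bullet. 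If instead no such splitting is available at the ends, the only fixed loci left are those where essentially all of the "mass" sits in a single punctual/exceptional configuration concentrated at the left end $p_i$, i.e. correlators $\langle [J_{\vec\eta}]|\Theta_{[i,j]}|[J_{\vec\eta}]\rangle$ with $\eta_k=\emptyset$ for $k\neq i$ — this is the first bullet. Throughout, the coefficients appearing are $T$-equivariant residues of Euler classes of the virtual normal bundles and classical structure constants, all of which lie in $R=\QQ[t_1,t_2]_{(t_1+t_2)}$ since (as noted after Lemma~\ref{div}) the relevant changes of basis introduce no $(t_1+t_2)$ in denominators.

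The main obstacle I anticipate is making precise the bookkeeping that reduces the a priori many-stratum localization sum to exactly these two families, in particular controlling the positive-dimensional $T$-fixed curve loci. The point — and the reason the $(t_1+t_2)$-filtration is so useful — is that we never have to evaluate the contribution of a positive-dimensional fixed locus honestly: we only need to show each such locus either contributes $0$ mod $(t_1+t_2)^2$ (via an excess trivial factor in the obstruction theory, as in Proposition~\ref{factorization}) or can be absorbed into one of the two listed families. So the real work is a careful case analysis of the possible $T$-fixed configurations of a genus-$0$, $2$-pointed stable map with nonpunctual class supported on $[i,j]$: showing that any domain component carrying a nontrivial multiple of two \emph{different} $E_\ell$'s, or any "branching" of the nonpunctual degree, or any contracted component at an end with nonempty partition data away from $p_i$ in a position not already accounted for, either dies mod $(t_1+t_2)^2$ or is recognized as a smaller $m'$ correlator after node-smoothing. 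Once that dichotomy is established, assembling the localization formula and reading off the two bullets — with the excess-mass-at-$p_i$ correlators as the genuinely "new" $m$-dependent input — is routine, and linearity in the coefficients over $R$ is immediate from the structure of the virtual localization formula.
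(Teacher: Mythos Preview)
Your proposal has a genuine gap, and it also diverges from the paper's approach in a way worth flagging. The paper's proof of this proposition uses \emph{no} direct localization on $\Mbar_{0,2}(\Hb(\An),\beta)$ at all; it is a short algebraic argument that takes Propositions~\ref{factorization} (factorization) and~\ref{vanishing} (vanishing) as black boxes. The idea is: work in the Nakajima basis associated to $\{1,[p_1],\dots,\widehat{[p_j]},\dots,[p_{n+1}]\}$; any part labelled by $1$ is removed by factorization (this is the source of the smaller-$m'$ terms), so one is reduced to fixed-point correlators with $\lambda_j=\eta_j=\emptyset$; then Proposition~\ref{vanishing} forces $\vec\lambda=\vec\eta$. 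For general $n$ a second pass, applying $\mathfrak{p}_{-1}(1)$ to a multipartition with one fewer box at some $p_k$ ($k\ne i,j$) and using vanishing again, kills any correlator with $\lambda_k\ne\emptyset$ for $k\ne i$. That is the whole proof.

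The specific gap in your argument is the mechanism you give for producing correlators on $\Hb_{m'}(\An)$ with $m'<m$. You write that a contracted component at an endpoint can be ``split off'' at its node, yielding via gluing a classical pairing times a two-point correlator for fewer points. But splitting the \emph{domain} of a stable map at a node does not change the target: both pieces still map to $\Hb_m(\An)$, not to any $\Hb_{m'}(\An)$. The reduction in the number of points in the paper comes from the Fock-space structure, namely from peeling off a Nakajima creation operator $\mathfrak{p}_{-k}(1)$ in an insertion (Proposition~\ref{factorization} and its restatement as Lemma~\ref{fact2}), not from any decomposition of the moduli of stable maps. Without that mechanism your case analysis cannot account for the second bullet, and the remaining plan (direct stratification of positive-dimensional $T$-fixed loci, excess-trivial-factor arguments for ``branching'' configurations) would in effect be re-deriving Proposition~\ref{vanishing} rather than using it, with no clear route to the diagonal form $\vec\eta=\vec\eta$ concentrated at $p_i$.
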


For the next two propositions, we specify the following fixed points by
the multipartitions $\vec{\rho},
\vec{\theta},\vec{\kappa},\vec{\sigma}$:
\begin{gather*}
\rho_i=(m),\quad \rho_k=\emptyset, \mbox{ if } k\ne  i,\\
\theta_i=(1^m), \quad \theta_k=\emptyset, \mbox{ if } k\ne i,\\
\kappa_i=(m-1),\quad \kappa_j=(1),\quad\kappa_k=\emptyset,\mbox{
if } k\ne i,j,\\
\sigma_i=(1^{m-1}),\quad \sigma_j=(1),
\quad\sigma_k=\emptyset,\mbox{ if } k\ne i,j.
\end{gather*}
We then have the next two steps in our algorithm.
\begin{prop}\label{fixedlemma2}
For $m \geq 1$, a two-point correlator $\langle [J_{\vec{\eta}}]|\Theta_{[i,j]}|
[J_{\vec{\eta}}]\rangle$ where $\eta_{k} = \emptyset$ for
$ k \ne i$ is congruent modulo $(t_1+t_2)^{2}$ to a linear
combination of
\begin{itemize}
\item $\langle [J_{\vec{\rho}}]|\Theta_{[i,j]}|[J_{\vec{\kappa}}]\rangle$
\item $\langle [J_{\vec{\theta}}]|\Theta_{[i,j]}| [J_{\vec{\sigma}}]\rangle$
\item two-point correlators $\langle [J_{\vec{\lambda}}]|\Theta_{[i,j]}| J_{\vec{\eta}}]\rangle$
for $\Hb_{m^{\prime}}(\An)$ with $m^{\prime} < m$
\end{itemize}
with coefficients in $R$ determined by pairings in the classical equivariant cohomology of
$\Hb(\An)$.
\end{prop}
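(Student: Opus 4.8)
We need to prove that a diagonal two-point correlator $\langle [J_{\vec\eta}] \mid \Theta_{[i,j]} \mid [J_{\vec\eta}]\rangle$, where $\eta_k = \emptyset$ for $k \neq i$ (so $\eta_i$ is a partition of $m$ concentrated at the single fixed point $p_i$), reduces modulo $(t_1+t_2)^2$ to the two "universal" correlators $\langle [J_{\vec\rho}] \mid \Theta_{[i,j]} \mid [J_{\vec\kappa}]\rangle$ and $\langle [J_{\vec\theta}] \mid \Theta_{[i,j]} \mid [J_{\vec\sigma}]\rangle$ plus lower-$m$ data. I'd prove this in three parts, the third being the crux.

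Let me think about what we actually have.

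**Part 1: Reduce the diagonal correlator to off-diagonal "building block" correlators via the Jack/Schur dictionary mod $(t_1+t_2)$.** By the observation at the end of Section 3.4, it suffices to compute these invariants mod $(t_1+t_2)^2$; and since the invariants are all of the form $\gamma(t_1+t_2)$ with $\gamma$ nonequivariant (by the reduced-class argument / Lemma \ref{div}), it in fact suffices to determine $\gamma$, i.e. to work with the $t_1+t_2 = 0$ specialization of the leading coefficient. Under that specialization the fixed-point basis $[J_{\lambda}]$ on a single $\mathbb C^2$-chart becomes (proportional to) the Schur polynomial $\mathsf s_\lambda$, via the dictionary in Section \ref{bases}. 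So the correlator $\langle [J_{\vec\eta}] \mid \Theta_{[i,j]} \mid [J_{\vec\eta}]\rangle$ is, up to the known proportionality constants and mod $(t_1+t_2)$, a pairing of $\mathsf s_{\eta_i}$ against $\mathsf s_{\eta_i}$ through the operator $\Theta_{[i,j]}$, which lives on the $i$-th Fock factor.

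**Part 2: Use the Fock-factor structure of $\Theta_{[i,j]}$ to isolate the relevant Heisenberg modes.** Because $[i,j]$ records the support, only curve classes that are positive combinations of $E_i,\dots,E_{j-1}$ with both endpoint coefficients nonzero contribute; geometrically such a curve connects the chart at $p_i$ to the chart at $p_j$ and back. On the Fock side, after restricting to the $s$-degrees with support exactly $[i,j]$, the operator $\Theta_{[i,j]}$ is built from Heisenberg operators on the $p_i$ and $p_j$ factors only — and, for the diagonal correlator with $\eta_k = \emptyset$ for $k \neq i$, the bra and ket are supported entirely on the $p_i$-factor. So only the "diagonal-at-$p_i$" part of $\Theta_{[i,j]}$ matters here, or else one part of the curve class degenerates into a trivial obstruction-theory factor and contributes an extra $(t_1+t_2)$ (killing the term mod $(t_1+t_2)^2$) — exactly the kind of argument used in the proof of Proposition \ref{factorization}. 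Hence mod $(t_1+t_2)^2$ the diagonal correlator depends only on a single operator on $\mathrm{Symm}$, namely the "two-point operator attached to the interval $[i,j]$ localized at the single vertex $p_i$."

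**Part 3 (the main obstacle): show that single-vertex operator is determined mod $(t_1+t_2)^2$ by its matrix entries between a two-row partition and a two-column partition, i.e. by the $\vec\rho/\vec\kappa$ and $\vec\theta/\vec\sigma$ correlators.** This is where the real work is. The idea is an induction on $m$ using the Pieri/branching structure of Schur functions together with the earlier factorization results. Concretely: any Schur function $\mathsf s_{\eta_i}$ can be obtained by repeatedly applying $\mathsf p_1$ (i.e. $\pp_{-1}(p_i)$) to lower partitions and projecting; but the point operators $\pp_{\pm 1}(p_i)$ are controlled classically, and their commutators with the degree-raising/lowering parts of $\Theta_{[i,j]}$ feed back (via the divisor/string-type relations and Proposition \ref{factorization}) into strictly smaller-$m$ correlators. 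The commutation forces the matrix of the single-vertex operator to be determined by its entries on partitions of the form $(m)$, $(m-1,1)$, $(1^{m-1},1)$ and $(1^m)$ — the shapes appearing in $\vec\rho,\vec\kappa,\vec\sigma,\vec\theta$ — plus lower-$m$ contributions. The genuinely delicate point is controlling the excess $(t_1+t_2)$-divisibility carefully enough that the reduction is exact mod $(t_1+t_2)^2$ and not merely mod $(t_1+t_2)$: one must check that every "commutator term" that is not one of the four privileged shapes either carries a second factor of $(t_1+t_2)$ (from a degenerate component of the curve class, as in Proposition \ref{factorization}) or is a genuine lower-$m$ correlator with coefficient in $R$. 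I expect this bookkeeping — tracking which components of the stable-map moduli space contribute a trivial obstruction factor — to be the heart of the proof, directly parallel to the $\Hb(\mathbb C^2)$ analysis of \cite{okpanhilb}.
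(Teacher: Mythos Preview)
Your Parts~1 and~2 are on the right track, but Part~3 misses the actual mechanism and, as written, would not close the induction. You propose to reduce the diagonal correlator by computing commutators of $\pp_{\pm1}$ with $\Theta_{[i,j]}$ and tracking excess $(t_1+t_2)$-divisibility from degenerate curve components, in the style of Proposition~\ref{factorization}. But for the geometric operator $\Theta$ we have no explicit commutation relations (those are only available for $\Omega_+$, in Section~\ref{operatorcalculations}), so there is nothing to compute; and factorization alone does not cut down the number of unknowns enough to reach only the four extremal shapes.

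The ingredient you are missing is Proposition~\ref{vanishing}, the unbroken-curves vanishing statement: if $\vec\lambda\neq\vec\eta$ but $|\lambda_i|=|\eta_i|$ or $|\lambda_j|=|\eta_j|$, then $\langle[J_{\vec\lambda}]\,|\,\Theta_{[i,j]}\,|\,[J_{\vec\eta}]\rangle\equiv 0\pmod{(t_1+t_2)^2}$. The paper's argument runs as follows (for $\mathcal A_1$, in Schur notation). If $\eta_i=\lambda$ is not a rectangle, choose two distinct single-box removals $\lambda\searrow\lambda_1,\lambda_2$. By Proposition~\ref{vanishing} the off-diagonal correlator $\langle \mathsf s_{\lambda_1}\otimes \mathsf s_{(1)}\,|\,\Theta\,|\,\mathsf s_{\lambda_2}\otimes \mathsf s_{(1)}\rangle$ vanishes $\bmod\ (t_1+t_2)^2$. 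Now use Lemma~\ref{fact2} (the factorization statement) to move each $\mathsf s_{(1)}=\mathsf p_1$ from the second tensor slot to the first, and expand by the Schur branching rule; a second application of Proposition~\ref{vanishing} kills every cross term, leaving exactly $\langle \mathsf s_\lambda\otimes 1\,|\,\Theta\,|\,\mathsf s_\lambda\otimes 1\rangle\sim 0$, i.e.\ the diagonal correlator is $\sim$ to lower-$m$ data. Rectangles $\lambda=(a,\dots,a)$ with $a\neq 1,m$ are handled by the same trick using two-box removals and $\mathsf s_{(2)},\mathsf s_{(1,1)}$. Only $\lambda=(m)$ and $\lambda=(1^m)$ survive, and a final factorization step relates their diagonal correlators to the specified off-diagonal ones $\langle[J_{\vec\rho}]\,|\,\Theta\,|\,[J_{\vec\kappa}]\rangle$ and $\langle[J_{\vec\theta}]\,|\,\Theta\,|\,[J_{\vec\sigma}]\rangle$ (note $\vec\kappa,\vec\sigma$ carry a box at $p_j$, not just shapes at $p_i$). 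So the engine is the interplay of Proposition~\ref{vanishing} with factorization and Schur branching, not commutator calculus or extra reduced-class factors.
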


\begin{prop}\label{fixedlemma3}
For these special two-point correlators we have the following expression modulo 
$(t_1+t_2)^2$
\begin{gather*}
\langle
[J_{\vec{\rho}}]|\Theta_{[i,j]}|[J_{\vec{\kappa}}]\rangle=(-1)^{m-1}(t_1+t_2)
((n+1) t_1)^{2m}\frac{(m!)^2}{m}
\log(1-(-q)^{m-1}s_{ij}),\\
\langle
 [J_{\vec{\theta}}]|\Theta_{[i,j]}|[J_{\vec{\sigma}}]\rangle=
(-1)^{m-1}(t_1+t_2)((n+1) t_1)^{2m}\frac{(m!)^2}{m}
\log(1-(-q)^{-m+1}s_{ij}),
\end{gather*}
where $s_{ij}=s_i\cdot\dots \cdot s_{j-1}$ and the logarithm is expanded
with non-negative exponents in $s_{k}$.
\end{prop}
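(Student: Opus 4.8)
The plan is to establish both identities by $T$-equivariant virtual localization on $\Mbar_{0,2}(\Hb_m(\An),\beta)$, summed over all curve classes $\beta$ whose surface part is a nonnegative multiple $d\,\alpha_{ij}$ of the positive root. As in the discussion following Proposition~\ref{factorization}, it suffices to compute each such correlator modulo $(t_1+t_2)^2$, i.e.\ to compute the associated \emph{reduced} two-point invariant, which is a nonequivariant rational number; and since the insertions are fixed-point classes, hence compactly supported, the relevant incidence loci are compact and localization applies in the standard way. A useful preliminary remark is that, working modulo $(t_1+t_2)$, we may set $t_2=-t_1$, under which all of the tangent weights degenerate to $w^L_k\equiv(n+1)t_1$ and $w^R_k\equiv-(n+1)t_1$. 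This is the origin of the overall factor $((n+1)t_1)^{2m}$ — it enters through the normalization relating the fixed-point classes $[J_{\vec\lambda}]$ to the integral Jack (equivalently Schur) polynomials of Section~\ref{bases} — and it also explains why the answer depends on $i<j$ only through the monomial $s_{ij}$.

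I would first treat the primitive case, where $\beta$ has surface part $\alpha_{ij}=E_i+\dots+E_{j-1}$ and punctual degree $m-1$ (the $d=1$ term of the logarithm). The key step is to identify the fixed loci of the moduli space that survive modulo $(t_1+t_2)^2$: by Lemma~\ref{div} and the reduced-class construction, every positive-dimensional fixed locus contributes an excess factor of $(t_1+t_2)$ and can be discarded, so only isolated fixed stable maps remain. These should be completely explicit — a chain of rational components, one covering each $E_k$ with $i\le k\le j-1$ exactly once, glued to a component inside the punctual Hilbert scheme of $p_i$ that interpolates between the fat point $(m)$ (resp.\ $(1^m)$) and the configuration obtained by shedding one point toward $p_j$. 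Evaluating the Euler class of the virtual normal bundle of this locus as a product of edge factors for the $E_k$ and a punctual factor at $p_i$, and using the degenerate weights above, should produce exactly $(-1)^{m-1}(t_1+t_2)((n+1)t_1)^{2m}(m!)^2/m$. The only difference between the $(\vec\rho,\vec\kappa)$ and $(\vec\theta,\vec\sigma)$ correlators is the direction in which the cluster at $p_i$ is spread, which reverses the sign of $D\cdot\beta$ and hence converts $(-q)^{m-1}$ into $(-q)^{-m+1}$.

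For $d>1$ the surface class becomes $d\,\alpha_{ij}$ with punctual degree $d(m-1)$, and the surviving fixed stable maps are precisely the degree-$d$ covers of the primitive configurations above. A standard multiple-cover computation in the reduced theory then contributes a factor $1/d$ relative to the primitive contribution, and summing over $d\ge 1$ against the bookkeeping weights $q^{D\cdot\beta}\prod_k s_k^{(1,\omega_k)\cdot\beta}$ gives
$$\sum_{d\ge1}\Big(-\frac1d\Big)\big((-q)^{\pm(m-1)}s_{ij}\big)^d=\log\!\big(1-(-q)^{\pm(m-1)}s_{ij}\big),$$
which is the asserted formula after multiplication by the constant $(-1)^{m-1}(t_1+t_2)((n+1)t_1)^{2m}(m!)^2/m$ identified above.

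The main obstacle is the geometry underlying the second step of each part: verifying that positive-dimensional fixed loci contribute trivially modulo $(t_1+t_2)^2$, giving the precise list of surviving isolated fixed stable maps (in particular understanding how the shedding at $p_i$ interacts with the chain of covers of $E_i,\dots,E_{j-1}$), and pinning down the combinatorial constants $(m!)^2/m$ and the signs. A convenient consistency check is to specialize to a single simple root, $j=i+1$: there the chain collapses to one edge and the computation matches the $\Hb(\CC^2)$ results of \cite{okpanhilb} after the substitution dictated by the embedding $\mathcal{H}\hookrightarrow\gotg\otimes\QQ(t_1,t_2)$, and the general root then differs only by the length of the chain.
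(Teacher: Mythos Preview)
Your high-level strategy is the same as the paper's: localize, discard all but a single isolated fixed map per degree $d$, and sum the residues to obtain the logarithm. But the mechanism you invoke for the discarding step is wrong, and your description of the surviving fixed maps is inaccurate in both cases.

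First, Lemma~\ref{div} does not say that positive-dimensional $T$-fixed loci contribute an excess factor of $(t_1+t_2)$; it is a statement about the full invariant. The paper's actual device is to localize with respect to the antidiagonal subtorus $T^{\pm}\subset T$ and to classify $T^{\pm}$-fixed components as \emph{broken} or \emph{unbroken}: broken contributions vanish outright (Lemma~\ref{vanish broken}, imported from \cite{okpanhilb}), and the unbroken condition --- that the two fractional tangent weights at every node sum to a multiple of $t_1+t_2$ --- is precisely what pins down a unique chain of $T$-orbits and forces the cover degrees on the components to be compatible. Without this ingredient you have no argument that only one fixed map survives per $d$.

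Second, the surviving chains are not as you describe. For $(\vec\theta,\vec\sigma)$ there is \emph{no} punctual component: the chain has exactly $j-i$ links, each a nonpunctual $T$-orbit moving one point one step along $E_i,\dots,E_{j-1}$, and the first link already carries $D$-degree $1-m$. For $(\vec\rho,\vec\kappa)$ there is an initial punctual orbit $(m)\to(m-1,1)$ at $p_i$ followed by $j-i$ nonpunctual orbits, so the chain has $j-i+1$ links; moreover the unbroken tangent-weight matching forces the punctual link to be a degree-$md$ cover when the nonpunctual links are degree-$d$ covers, not degree $d$ as you suggest. These structural facts are what produce the correct $q$-exponent $(-q)^{\pm(m-1)d}$ and, after the explicit edge/node/marked-point residue bookkeeping, the constant $(m!)^2/m$. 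The ``standard multiple-cover $1/d$'' heuristic is not enough here; the $1/d$ emerges only after all the edge automorphism and normal-bundle factors are assembled.
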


\subsection{Degree scaling}

As a first application of the above three steps, we easily show the
following proposition.
\begin{prop}\label{degreescaling}
The two-point operator $\Theta$ satisfies the following
properties:
\begin{itemize}
\item$\Theta_{+}(q,s_{1},\dots,s_{n}) = \sum_{1\leq i < j \leq n} \sum_{d \geq 1}
\Theta_{d\alpha_{i,j}}(q) (s_{i}\cdot\dots s_{j-1})^d$
\item $\Theta_{d\alpha_{i,j}}(-q) = \Theta_{\alpha_{i,j}}((-q)^{d})/d$
for $d \geq 1$.
\end{itemize}
\end{prop}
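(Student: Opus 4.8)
The plan is to deduce both parts of Proposition~\ref{degreescaling} from the three ``algorithmic'' Propositions~\ref{fixedlemma1}, \ref{fixedlemma2}, and \ref{fixedlemma3}, by induction on the number of points $m$. For the first bullet, I would argue that the support of any nonzero contribution to $\Theta_+$ must be an interval $[i,j]$ and that within a fixed support the curve class must be a positive multiple of the corresponding root $\alpha_{i,j}$. Concretely: fix a target support $[i,j]$ and run the reduction. Proposition~\ref{fixedlemma1} writes an arbitrary correlator $\langle [J_{\vec\lambda}]|\Theta_{[i,j]}|[J_{\vec\pi}]\rangle$ modulo $(t_1+t_2)^2$ in terms of the ``diagonal'' correlators with $\eta_k=\emptyset$ for $k\ne i$ plus strictly smaller $m'$; Proposition~\ref{fixedlemma2} then reduces those diagonal correlators to the two special correlators $\langle[J_{\vec\rho}]|\Theta_{[i,j]}|[J_{\vec\kappa}]\rangle$ and $\langle[J_{\vec\theta}]|\Theta_{[i,j]}|[J_{\vec\sigma}]\rangle$ (again up to smaller $m'$); and Proposition~\ref{fixedlemma3} evaluates these last two explicitly. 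Since all the change-of-basis coefficients and the coefficients appearing in the reductions lie in $R=\QQ(t_1,t_2)_{(t_1+t_2)}$ (no $(t_1+t_2)$ in denominators) and, crucially, do \emph{not} depend on $q$ or the $s$-variables, the entire $q,s$-dependence of $\Theta_{[i,j]}$ is inherited from the two explicit logarithms in Proposition~\ref{fixedlemma3}. Those logarithms depend on $s_i,\dots,s_{j-1}$ only through the single monomial $s_{ij}=s_i\cdots s_{j-1}$, so $\Theta_{[i,j]}$ is a function of $q$ and $s_{ij}$ alone; expanding in $s_{ij}$ shows that only the curve classes $d\alpha_{i,j}$, $d\ge1$, contribute, which is exactly the first bullet once one sums over all intervals $[i,j]$ and checks (via the base case $m$ small) that punctual-free classes with non-interval support give nothing.

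For the second bullet, the point is the scaling behavior of $\log(1-(-q)^{d}s_{ij})$ under $s_{ij}\mapsto s_{ij}^d$, $q\mapsto (-q)^d$. The two explicit correlators in Proposition~\ref{fixedlemma3}, at the ``$m$-point'' level with a single nonempty part $(m)$ or $(1^m)$ on the $i$-th factor, contribute the term $\log(1-(-q)^{m-1}s_{ij})$ (and its companion with $(-q)^{-m+1}$). Extracting the coefficient of $s_{ij}^d$ in $-\log(1-(-q)^{m-1}s_{ij})$ gives $(-q)^{d(m-1)}/d$, which is precisely $\tfrac1d$ times the coefficient of $s_{ij}$ in $-\log\bigl(1-((-q)^d)^{m-1}s_{ij}\bigr)$. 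Thus each elementary building block already satisfies $\Theta_{d\alpha_{i,j}}(-q)=\Theta_{\alpha_{i,j}}((-q)^d)/d$. To promote this from the building blocks to the full operator $\Theta$, I would note that the reductions of Propositions~\ref{fixedlemma1}--\ref{fixedlemma2} combine correlators with $q$- and $s$-independent coefficients: if every correlator at parameter $d\alpha_{i,j}$ is obtained from the corresponding correlator at $\alpha_{i,j}$ by the substitution $(-q)\mapsto(-q)^d$ followed by division by $d$, and the linear-combination coefficients are untouched by this substitution, the relation propagates through the whole algorithm. The induction on $m$ handles the ``smaller $m'$'' terms appearing in Propositions~\ref{fixedlemma1} and \ref{fixedlemma2}; one must check the substitution $(-q)\mapsto(-q)^d$ is compatible with how those lower-$m$ correlators enter, which it is because the coefficients there are again classical (hence $q$-free).

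The main obstacle is bookkeeping rather than any deep new idea: I have to verify carefully that \emph{none} of the coefficients produced in the reductions of Propositions~\ref{fixedlemma1} and \ref{fixedlemma2} secretly depend on $q$ or on the $s$-variables, and that the passage between the fixed-point basis $[J_{\vec\lambda}]$ and whatever basis $\Theta_+$ is ultimately recorded in is also $q$- and $s$-independent (this is the content of working modulo $(t_1+t_2)^2$ and the remark in Section~\ref{bases} that change-of-basis matrices have no $(t_1+t_2)$ in the denominator, but one still needs their $q,s$-independence, which holds because they come from classical equivariant cohomology). A secondary technical point is making precise the claim that a non-punctual curve class with non-interval support contributes nothing to $\Theta$ --- this should follow from the definition of support together with the fact that, after the reduction, every surviving term is tied to a single interval $[i,j]$ via Proposition~\ref{fixedlemma3}, but I would want to state it cleanly, perhaps by first observing that $\Theta_{[i,j]}$ only sees monomials in $s_{ij}$ and then noting $\Theta_+=\sum_{[i,j]}\Theta_{[i,j]}$ by definition of the support decomposition. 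Once these are in place, both bullets drop out by matching Taylor coefficients in $s_{ij}$ of the logarithms.
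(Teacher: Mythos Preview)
Your proposal is correct and follows essentially the same approach as the paper: induction on $m$, reduction via Propositions~\ref{fixedlemma1}--\ref{fixedlemma3} to the two explicit logarithmic correlators, and the observation that all reduction coefficients come from classical equivariant cohomology and hence are $q,s$-independent, so the $s_{ij}$-dependence and degree scaling of the logarithms propagate to the whole operator. The paper's own proof is terser but structurally identical; the one step you leave implicit that the paper names explicitly is the final invocation of Proposition~\ref{factorization} to pass from correlators with only $\omega_i$-labels (where linearity in $t_1+t_2$ makes the mod-$(t_1+t_2)^2$ computation exact) to the full operator $\Theta_+$ including parts labeled by $1$.
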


The first part of this proposition states that our answer only
depends on a sum over roots of the $A_n$ lattice.  The second part
states that taking multiples of a fixed root has an extremely
simple scaling property.  We will explain a second argument for the root dependence at the end of the paper.

\begin{proof}
We proceed by induction on $m$, the number of points, starting
from the vacuous case of $m=0$. Propositions \ref{fixedlemma1},\ref{fixedlemma2}, and \ref{fixedlemma3} then show that
two-point invariants in the fixed-point basis satisfy both claims
in the proposition modulo $(t_1+t_2)^{2}$.  This implies that the reduced
two-point invariants $\langle\prod \mu_{i}(\omega_{i}), \prod
\nu_{j}(\omega_{j})\rangle^{\Hb}$ also satisfy both claims modulo
$(t_1+t_2)^{2}$ and therefore precisely as well by the
observation at the end of last section.  The claim then follows
from the factorization of proposition~\ref{factorization}.
\end{proof}

\subsection{Localization and unbroken curves}

It suffices to evaluate the reduced two-point invariants
$$\left(  [J_{\vec{\lambda}}], [J_{\vec{\pi}}]\right)^{\Hb}_{[i,j]} \mod (t_1+t_2),$$
where again the subscript $[i,j]$ isolates curve classes with support in $[i,j]$.  While
the $T$-fixed loci are quite complicated to describe, many of these
loci will be contribute additional factors of $(t_1+t_2)$ and can
be ignored.  A description of which loci contribute nontrivially
and the proper framework for handling them is given in
\cite{okpanhilb}.  The discussion given here follows section $3.8$
of that paper.

Let $T^{\pm}$ denote the antidiagonal torus $\{(\xi, \xi^{-1}) \}
\subset T$. We will first analyze fixed loci with respect to the
smaller torus $T^{\pm}$. The fixed points on $\Hb_{m}(\An)$ for
$T^{\pm}$ are the same as those with respect to $T$ but the locus
of fixed maps is larger. Let $f \in
\Mbar_{0,2}(\Hb_{m}(\An),\beta)$ denote a $T^{\pm}$-fixed map $f:
C \rightarrow \Hb(\An)$.  We say that $f$ is {\it broken} if
either
\begin{itemize}
\item the domain $C$ contains a connected, $f$-contracted subcurve $C^{\prime}$ for which
the curve $C \backslash C^{\prime}$ has at least two connected
components which are not $f$-contracted.
\item Two non $f$-contracted components $P_{1},P_{2} \subset C$ meet at a node $s$ of $C$
and have tangent weights $w_{P_{1},s}, w_{P_{2},s}$ satisfy
$w_{P_{1},s} + w_{P_{2},s} \ne 0$.
\end{itemize}
If $f$ is broken, so is every map in the connected component of
the $T^{\pm}$-fixed locus containing $f$ and we can classify these
components as broken and unbroken.  We can decompose the
$T^{\pm}$-equivariant calculation for the reduced two-point
correlator into contributions from broken and unbroken fixed loci:
$$\left(  [J_{\vec{\lambda}}], [J_{\vec{\pi}}]\right)^{\Hb}_{[i,j]} =
\left(  [J_{\vec{\lambda}}],
[J_{\vec{\pi}}]\right)^{\mathrm{broken}}_{[i,j]} + \left(
[J_{\vec{\lambda}}],
[J_{\vec{\pi}}]\right)^{\mathrm{unbroken}}_{[i,j]}.$$ The
following lemma is proven in \cite{okpanhilb}.
\begin{lem}\label{vanish broken}
The $T^{\pm}$-equivariant broken contributions vanish.  In particular,
if there are no unbroken contributions, then $\left(  [J_{\vec{\lambda}}], [J_{\vec{\pi}}]\right)^{\Hb}_{[i,j]}$
vanishes $\mod t_1+t_2$.
\end{lem}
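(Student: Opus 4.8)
The plan is to reduce, following \cite[\S3.8]{okpanhilb}, to a statement about $T^{\pm}$-equivariant virtual localization for the reduced class. By compactness of the incidence loci attached to fixed-point insertions, the ordinary invariant $\langle[J_{\vec\lambda}],[J_{\vec\pi}]\rangle^{\Hb}_{0,2,\beta}$ lies in $\QQ[t_{1},t_{2}]$ and is divisible by $(t_{1}+t_{2})$ by Lemma~\ref{div}; hence the reduced invariant $\left([J_{\vec\lambda}],[J_{\vec\pi}]\right)^{\Hb}_{[i,j]}$ is itself a polynomial in $t_{1},t_{2}$, and its class modulo $(t_{1}+t_{2})$ is its restriction to the subtorus $T^{\pm}=\{t_{1}+t_{2}=0\}$. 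Since the line $\CC\gamma$ spanned by the holomorphic symplectic form carries $T$-weight $-(t_{1}+t_{2})$, it is $T^{\pm}$-trivial, so the one-dimensional obstruction removed in the reduced construction lies, on $T^{\pm}$, in the \emph{fixed} part of the obstruction theory; consequently the moving part of $N^{\mathrm{vir},\mathrm{red}}$ agrees with that of the standard theory and has no zero weights. The virtual localization formula \cite{grabpan} therefore applies to $[\Mbar_{0,2}(\Hb_{m}(\An),\beta)]^{\mathrm{red}}$ and writes $\left([J_{\vec\lambda}],[J_{\vec\pi}]\right)^{\Hb}_{[i,j]}\bmod(t_{1}+t_{2})$ as a sum, over connected components $F$ of the $T^{\pm}$-fixed locus, of $\int_{[F]^{\mathrm{red}}}i_{F}^{*}\bigl(\ev_{1}^{*}[J_{\vec\lambda}]\cdot\ev_{2}^{*}[J_{\vec\pi}]\bigr)\big/\,e\bigl(N^{\mathrm{vir},\mathrm{red}}_{F}\bigr)$.

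Next I would make the fixed-locus analysis local on the surface. Since $T^{\pm}$ fixes exactly $p_{1},\dots,p_{n+1}$ in $\An$, a $T^{\pm}$-fixed subscheme is supported there, and the $T^{\pm}$-fixed locus of $\Hb_{m}(\An)$ is the disjoint union over $\sum m_{k}=m$ of the products $\prod_{k}\Hb_{m_{k}}(\CC^{2})^{T^{\pm}}$ formed in the toric charts $U_{k}\cong\CC^{2}$. A $T^{\pm}$-fixed stable map thus decomposes according to how the length $m$ and the curve class $\beta$ of support $[i,j]$ distribute among the charts, and on each chart the configuration of $T^{\pm}$-invariant curves and the reduced deformation theory are precisely those of $\Hb(\CC^{2})$. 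In particular the split of fixed components into broken and unbroken, and the structure of $N^{\mathrm{vir},\mathrm{red}}_{F}$, reduce chart by chart to \cite[\S3.8]{okpanhilb}, where the vanishing of each broken contribution is proved. The mechanism there is, roughly, a rigidity statement: a breaking --- a contracted subcurve meeting two or more non-contracted components, or an unbalanced node between two non-contracted components --- forces into the reduced obstruction bundle of $F$ itself an extra $T^{\pm}$-trivial line, obtained by pairing $\gamma$ against the additional first-order deformations created by the breaking; since a trivial line bundle has vanishing Euler class, $[F]^{\mathrm{red}}=0$ and the contribution of $F$ vanishes. Granting this for every broken component, the ``in particular'' is immediate: if there are moreover no unbroken components the localization sum is empty, so $\left([J_{\vec\lambda}],[J_{\vec\pi}]\right)^{\Hb}_{[i,j]}\equiv 0\pmod{t_{1}+t_{2}}$.

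The step I expect to do the real work is the chart reduction: one must check that the $T^{\pm}$-fixed loci of $\Mbar_{0,2}(\Hb_{m}(\An),\beta)$ genuinely split over the charts $U_{k}$ with no interaction beyond the matching at the points $p_{k}$, and --- more delicately --- that the mechanism forcing the broken loci to contribute zero in \cite{okpanhilb}, essentially the assertion that the symplectic reduction annihilates exactly the broken part of the fixed locus, transports unchanged to the $\An$ charts and is unaffected by restricting to curve classes of support $[i,j]$. I expect this to follow from the locality of the description of the reduced obstruction theory, but it is the point that needs to be written out carefully.
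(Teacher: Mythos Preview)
The paper gives no proof of this lemma; it simply cites \cite{okpanhilb}, where the argument is carried out directly for any target with a holomorphic symplectic form. Your description of the underlying mechanism is correct: a breaking produces a second $T^{\pm}$-trivial line in the obstruction theory (one for each maximal non-contracted subchain separated by the break), and since the reduced construction removes only one such line, the reduced virtual class of a broken component vanishes. That argument in \cite{okpanhilb} uses only the symplectic form $\gamma$ on the target and nothing specific to $\Hb(\CC^{2})$, so it transports to $\Hb_{m}(\An)$ verbatim.

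Your chart-by-chart reduction, however, does not work as stated and is in any case unnecessary. It is true that $T^{\pm}$-fixed \emph{points} of $\Hb_{m}(\An)$ are supported at the $p_{k}$ and hence factor through the charts $U_{k}$. But $T^{\pm}$-fixed stable \emph{maps} do not split this way: a non-contracted component can be a multiple cover of a nonpunctual $T$-orbit connecting $p_{k}$ to $p_{k+1}$ (exactly the orbits described in Lemma~\ref{nonpunctualorbit}), and such a component lives in no single chart. So the claim that ``the configuration of $T^{\pm}$-invariant curves and the reduced deformation theory are precisely those of $\Hb(\CC^{2})$'' on each chart is false, and the worry you flag in your last paragraph is a genuine obstruction, not a technicality. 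The fix is simply to drop the chart reduction and run the broken-locus argument of \cite[\S3.8]{okpanhilb} directly on $\Hb_{m}(\An)$: the two cases in the definition of ``broken'' each produce, via pairing with $\gamma$, an additional trivial summand of the $T^{\pm}$-fixed part of the obstruction sheaf on $F$, forcing $[F]^{\mathrm{red}}=0$.
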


\subsection{Properties of unbroken curves}

Every nonempty unbroken connected component must contain a
$T$-fixed map, so we can determine necessary criteria for
two-point invariants to be nonzero $\mod (t_1+t_2)^{2}$.

We first give a description of nonpunctual one-dimensional
$T$-orbits on $\Hb_{m}(\An).$ In the following, given partitions
$\mu, \rho$ we denote by $\mu \cup \rho$ the partition obtained by
concatenating the lists of parts and $\mu^{\prime}$ the partition
obtained by taking the transpose of the associated Young diagram.
\begin{lem}\label{nonpunctualorbit}
Suppose we have a nonpunctual one-dimensional $T$-orbit $C$
connecting fixed points $J_{\vec{\lambda}}$ and $J_{\vec{\eta}}$.
Then there exists a unique $k$ and partitions $\mu, \nu, \rho$
such that
\begin{itemize}
\item $(1,\omega_{k})\cdot [C] > 0$,
\item $\lambda_{l} = \eta_{l}$ for $l \neq k, k+1$,
\item up to reordering the two fixed points we have
\begin{gather*}
\lambda_{k} = \mu \cup \rho,\quad \lambda_{k+1} = \nu^{\prime}\\
\eta_{k} = \mu,\quad \eta_{k+1} = (\nu \cup
\rho)^{\prime}\end{gather*}
\item and the tangent weight at $J_{\vec{\lambda}}$ is a positive integral multiple
of $w_{R}^{k}$.
\end{itemize}
If $l(\rho) = 1$ then there is a unique such $T$-orbit.
\end{lem}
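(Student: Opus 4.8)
The plan is to analyze $T$-fixed one-dimensional orbits on $\Hb_m(\An)$ concretely, using the affine cover $U_1,\dots,U_{n+1}$ by copies of $\mathbb{C}^2$ and the fact that a nonpunctual orbit must move support between two adjacent fixed points $p_k$ and $p_{k+1}$ along the curve $E_k$. First I would observe that since $C$ is nonpunctual, the Hilbert--Chow image $\rho^{HC}_\ast[C]$ is a nonzero effective curve class, which (by the support condition and the fact that a one-dimensional $T$-orbit can only move a single point) forces exactly one point of the length-$m$ subscheme to travel along a single exceptional curve $E_k$ while the remaining $m-1$ points stay $T$-fixed. This immediately gives $(1,\omega_k)\cdot[C]>0$ for a unique $k$ and $\lambda_l=\eta_l$ for $l\ne k,k+1$, since the subschemes supported near $p_l$ for $l\ne k,k+1$ are unchanged along the orbit.

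Next I would pin down the combinatorics near $p_k$ and $p_{k+1}$. The key is the standard local picture: near $p_k$, with coordinates so the $x$-axis is $E_{k-1}$ and the $y$-axis is $E_k$, a $T$-fixed subscheme restricts to a monomial ideal $I_{\lambda_k}$; similarly near $p_{k+1}$, where now $E_k$ is the $x$-axis. A one-dimensional orbit along $E_k$ corresponds to the $\mathbb{C}^\ast$-family of subschemes obtained by letting one "box" slide off the $y$-axis column at $p_k$ onto the $x$-axis row at $p_{k+1}$; this is precisely the elementary transformation that replaces $(\mu\cup\rho,\nu')$ at one end with $(\mu,(\nu\cup\rho)')$ at the other, where $\mu$ records the part of $\lambda_k$ untouched, $\rho$ records the boxes that migrate (as a column of $E_k$ at $p_k$, these become a row at $p_{k+1}$, hence the transposes), and $\nu$ records the existing content at $p_{k+1}$ that the migrating boxes attach beneath. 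I would verify this by writing out the two monomial ideals explicitly and checking that the gluing along $U_k\cap U_{k+1}$ is consistent as a flat family over $\mathbb{P}^1$ (the $T$-orbit closure). The tangent weight computation at $J_{\vec\lambda}$ then follows from the local tangent space formula for $\Hb_m(\mathbb{C}^2)$: the deformation moving the box toward $p_{k+1}$ has weight a positive multiple of $w_R^k$, using the tangent weights $w_i^L,w_i^R$ listed in Section~2.1.

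Finally, for the uniqueness statement when $l(\rho)=1$: when only a single box migrates, $\rho=(1)$ is forced, $\mu$ is determined as $\lambda_k$ minus its last part (or a corner box) and $\nu'$ is determined by $\lambda_{k+1}$, so all the data $(k,\mu,\nu,\rho)$ is rigidly fixed by the pair $(\vec\lambda,\vec\eta)$, and there is a unique $T$-orbit realizing the transition — the family is a single $\mathbb{P}^1$. The main obstacle I anticipate is making the box-migration bookkeeping fully rigorous: one must be careful that $\mu$, $\nu$, $\rho$ are genuinely well-defined (not just up to the stated reordering of the two endpoints) and that the transposes land on the correct factor, since the roles of the $x$- and $y$-axes swap between the charts $U_k$ and $U_{k+1}$. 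I would handle this by adopting the convention already fixed in Section~2.1 identifying the axes of $U_k$ with $E_{k-1},E_k$, and tracking a single monomial basis through the coordinate change $(x,y)\mapsto(y^{-1},xy^{\,c})$ (or the appropriate transition map of the $A_n$ resolution) to see the column-to-row transposition explicitly.
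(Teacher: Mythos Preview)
Your overall picture is right, and your explicit box-migration description in the second paragraph is more detailed than what the paper gives (the paper simply cites section~7.2 of Nakajima's book for the local structure near a fixed point, which supplies conditions (iii) and (iv)). But there is a real gap in your first paragraph, and it affects the argument for conditions (i) and (ii).

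The claim that a one-dimensional $T$-orbit ``can only move a single point'' while ``the remaining $m-1$ points stay $T$-fixed'' is false in general. When $|\rho|>1$, several boxes migrate simultaneously along the same $E_k$; indeed you say as much later (``$\rho$ records the boxes that migrate''), so your two paragraphs are inconsistent. More importantly, the Hilbert--Chow argument you give does not, by itself, rule out a simultaneous \emph{punctual} change at some other $p_l$ with $l\neq k,k+1$: such a change is contracted by $\rho^{HC}$ and would be invisible in $\mathrm{Sym}^m(\An)$. So you have not actually established $\lambda_l=\eta_l$ for $l\neq k,k+1$.

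The paper closes this gap with a tangent-weight argument rather than a support argument. At a fixed point $J_{\vec\lambda}$, the tangent space to $\Hb_m(\An)$ decomposes over the factors $p_1,\dots,p_{n+1}$, and the tangent direction of the orbit must have weight proportional to $w_R^k$ (by the local model at $p_k$). One then checks that $w_R^k$ is not proportional to any tangent weight $a\,w_L^l+b\,w_R^l$ arising at a factor $l\neq k,k+1$: a short computation gives $a/b=(l-k)/(l-k-1)>0$ for such $l$, contradicting the constraint $ab\le 0$ on Hilbert-scheme tangent weights (and $w_R^k$ is not proportional to $w_R^l$ or $w_L^l$ either, except $w_L^{k+1}=-w_R^k$). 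This single non-proportionality observation forces the uniqueness of $k$ and the invariance of $\lambda_l$ for $l\neq k,k+1$ in one stroke, and you should replace your ``one point moves'' heuristic with it.
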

\begin{proof}

The structure of a nonpunctual orbit near a fixed point 
can be analyzed using $\Hb(\mathbb{C}^{2})$ as a local model, in
which case, the third and fourth conditions are satisfied by
the analysis in section $7.2$ of \cite{nakajimabook}.  The transpose
occurs because of our convention for partition orientation.  
Since the tangent weights $w_{R}^{k}$ are not proportional to
any other tangent weights, this forces the uniqueness of $k$
as well as the first two conditions.  
\end{proof}

For punctual $T$-orbits we have the following lemma
\begin{lem}\label{punctualorbit}
Given a punctual $T$-orbit connecting $J_{\vec{\lambda}}$ and
$J_{\vec{\eta}}$, then there exists a unique $k$ such that
\begin{itemize}
\item
$\lambda_{l} = \eta_{l}$ for $l \ne k$,
\item
the tangent weight at $J_{\vec{\lambda}}$ is of the form
$$a\cdot w_{L}^{k} + b\cdot w_{R}^{k}$$
with $a\cdot b \leq 0$.
\end{itemize}
\end{lem}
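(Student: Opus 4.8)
The plan is to reduce, as in the nonpunctual case (Lemma~\ref{nonpunctualorbit}), to a local analysis on $\Hb(\mathbb{C}^2)$. A punctual $T$-orbit $C$ connecting two fixed subschemes $J_{\vec\lambda}$ and $J_{\vec\eta}$ is by definition contracted by the Hilbert--Chow morphism $\rho^{HC}$, so it parametrizes subschemes with a fixed support configuration on $\An$. Since $\rho^{HC}_\ast[C] = 0$, the support multiset is constant along $C$; in particular the support does not move among the $T$-fixed points $p_1,\dots,p_{n+1}$ of $\An$ (it cannot even move continuously, being a finite set of points), so the restriction of the subscheme to every affine chart $U_l\cong\mathbb{C}^2$ other than one distinguished chart $U_k$ stays at a fixed monomial ideal; this gives the first bullet, $\lambda_l = \eta_l$ for $l\ne k$, and also the uniqueness of $k$. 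The remaining content is entirely concentrated in the chart $U_k$, where we are looking at a one-dimensional punctual $T$-orbit on $\Hb_{|\lambda_k|}(\mathbb{C}^2)$.

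First I would make the first bullet precise: a one-dimensional $T$-orbit in $\Hb_m(\An)$ has closure a rational curve whose two torus-fixed endpoints are $J_{\vec\lambda}$ and $J_{\vec\eta}$; restricting to the open cover $\{U_l\}$, the family of subschemes over $C$ decomposes as a product of families over the charts, and "punctual" forces all but one of these families to be constant (a nonconstant family on $U_l$ with $l\ne k$ would change the monomial ideal there, hence move support, contradicting $\rho^{HC}$-contraction). This pins down $k$ and the equality $\lambda_l=\eta_l$ for $l\ne k$. The slight subtlety to check is that a single one-dimensional orbit cannot be nonconstant in two different charts simultaneously --- but that would force the orbit closure to map onto a curve in $S^m(\An)$ in the $U_k$ and $U_{k'}$ factors, again contradicting punctuality; alternatively, a $T$-orbit on a product maps isomorphically onto a $T$-orbit in one factor and is constant in the others.

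Next I would establish the tangent weight statement by purely local computation on $\Hb_m(\mathbb{C}^2)$. The $T$-weights on the tangent space $T_{I_\lambda}\Hb_m(\mathbb{C}^2)$ are the arm-leg hook weights: they have the form $(a+1)(-t_1) + b\, t_2$ or $a(-t_1)+(b+1)t_2$ type expressions --- more precisely, for each box of $\lambda$ the two tangent weights are $-(\mathrm{leg}+1)t_1 + \mathrm{arm}\, t_2$ and $(\mathrm{arm}+1)t_1 - \mathrm{leg}\, t_2$ with nonnegative arm and leg. In the present normalization the $x,y$-axes in $U_k$ are identified with $E_{k-1},E_k$, so the two coordinate directions carry weights $w_L^k$ and $w_R^k$ (up to sign conventions), and every tangent weight at $J_{\vec\lambda}$ coming from a move inside $U_k$ is an integral combination $a\, w_L^k + b\, w_R^k$ with $a$ and $b$ of opposite sign (or one of them zero), exactly because arm and leg are nonnegative and the two hook weights pair them with opposite signs. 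I would cite section~$7.2$ of \cite{nakajimabook} for the description of these tangent weights, exactly as in the proof of Lemma~\ref{nonpunctualorbit}, and then read off $a\cdot b\le 0$. The nonpunctual case was precisely the one where the relevant hook weight is a \emph{positive} multiple of $w_R^k$ alone (one of $a,b$ zero with the other positive); the punctual orbits account for all the remaining one-dimensional orbits fixed in the $U_k$-chart, which are the genuinely "internal" deformations of a monomial ideal, and for those the $T$-weight is a mixed hook weight with $ab\le 0$.

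The main obstacle I anticipate is rigorously justifying the "decomposition into charts" step --- i.e. that a punctual one-dimensional $T$-orbit in $\Hb_m(\An)$ really is supported, scheme-theoretically, in a single affine chart $U_k$ and is visibly a one-dimensional $T$-orbit in $\Hb_{|\lambda_k|}(\mathbb{C}^2)$ under the product decomposition of a neighborhood of the fixed point. This requires knowing that near a $T$-fixed point $J_{\vec\lambda}$ the Hilbert scheme $\Hb_m(\An)$ is $T$-equivariantly a product $\prod_l \Hb_{|\lambda_l|}(U_l)$ locally (which holds because the supports are at distinct fixed points $p_l$ and the Hilbert functor is local on disjoint opens), together with the fact that the Hilbert--Chow morphism respects this product --- so $\rho^{HC}$-contraction is equivalent to contraction in each factor. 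Once that is in hand, everything reduces to the single-chart statement for $\mathbb{C}^2$, which is standard hook-weight bookkeeping, and the lemma follows.
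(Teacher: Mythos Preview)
Your argument for the second bullet (the form of the tangent weight via the arm--leg hook formula on $\Hb(\mathbb{C}^2)$) is correct and matches the paper. The gap is in your argument for the \emph{uniqueness} of $k$. Both justifications you offer are incorrect as stated. First, ``a nonconstant family on $U_l$ would change the monomial ideal there, hence move support'' is false: a punctual deformation at $p_l$ changes the monomial ideal while keeping the support (with multiplicity) fixed, so Hilbert--Chow contraction rules out nothing here. Second, ``a $T$-orbit on a product is constant in all but one factor'' is false in general: if two factors happen to have proportional tangent weights at the fixed point, a one-dimensional $T$-orbit can project nontrivially to both. So punctuality alone does not force the orbit into a single chart.

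The paper's proof supplies exactly the missing ingredient. It observes that the tangent weights at $J_{\vec\lambda}$ coming from chart $U_k$ are of the form $a\,w_L^k + b\,w_R^k$ with $ab\le 0$, and then checks directly that as $k$ varies these linear combinations are \emph{distinct} in $\mathbb{Z}t_1\oplus\mathbb{Z}t_2$. Concretely, from $a\,w_L^k+b\,w_R^k=c_1t_1+c_2t_2$ one finds $a-b=(c_1-c_2)/(n+1)$ and then $a=c_2+k(a-b)$, $b=c_2+(k-1)(a-b)$; the sign condition $ab\le 0$ pins down $k$ from $(c_1,c_2)$. This distinctness is what guarantees that each one-dimensional $T$-orbit through $J_{\vec\lambda}$ lies in a single chart factor of the local product decomposition, and hence that $\lambda_l=\eta_l$ for $l\ne k$. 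In short, you need a weight-separation argument, not a support argument, to get uniqueness of $k$.
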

\begin{proof}
At a fixed points of $\Hb(\mathbb{C}^{2})$, the tangent weights are of
the form $at_{1} +bt_{2}$ with $a\cdot b \leq 0$.  One can check
directly that the associated linear combinations $a\cdot w_{L}^{k}
+ b\cdot w_{R}^{k}$ are distinct as we vary $k$. Therefore any
$T$-orbit is noncontracted over a unique fixed point and the
statement follows.
\end{proof}

Let $f: (C,z_{1},z_{2}) \rightarrow \Hb_{m}(\An)$ be a
two-pointed, $T$-fixed unbroken map. Any noncontracted irreducible
component of $C$ has at most $2$ marked or nodal points.  This
implies that $C$ must be a chain of rational curves
$$C = C_{1} \cup C_{2} \cup \dots \cup C_{r}$$
with nodes  $q_{1}, \dots, q_{r-1}$. Up to relabelling, we must
have
$$z_{1} \in C_{1}, z_{2} \in C_{r}$$
or
$$z_{1},z_{2} \in C_{1}$$
and $C_{1}$ is $f$-contracted.

In the first case, let $S$ denote the sequence of $T$-fixed points
of $\Hb(\An)$
$$S = f(z_{1}), f(q_{1}), f(q_{2}), \dots, f(q_{r}), f(z_{2});$$
we otherwise take $S$ to be
$$S = f(q_{1}), f(q_{2}), \dots, f(q_{r}).$$

We define an ordering  on fixed points given as follows. Given a
partition $\lambda$ with parts $\lambda^{(1)}, \dots,
\lambda^{(l)}$, we have the function
$$\epsilon(\lambda) = \sum \binom{\lambda^{(i)}}{2}.$$
Given two multipartitions $\vec{\lambda},
\vec{\eta}$, we say that
$$\vec{\lambda} \succeq \vec{\eta}$$
if, when $k$ is the smallest integer such that $|\lambda_{k}| \ne
|\eta_{k}|$, we have $|\lambda_{k}| > |\eta_{k}|$ or, if $|\lambda_{k}|
= |\eta_{k}|$ for all $k$, then $\epsilon(\lambda_{k}) \geq
\epsilon(\eta_{k})$ for all $k$.

\begin{lem}\label{partialordering}
The sequence $S$ is either an increasing sequence or a decreasing
sequence with respect to the ordering $\succeq$.
\end{lem}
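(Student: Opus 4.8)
The plan is to analyze the sequence $S$ of $T$-fixed points read off along the chain $C = C_1 \cup \dots \cup C_r$ and show that consecutive fixed points in $S$ are always comparable in $\succeq$ and that the direction of comparison cannot switch. The strategy breaks into two pieces: a \emph{local} statement at each edge $C_a$ (that the two endpoints $f(q_{a-1}), f(q_a)$ are comparable, and that the sign of the comparison is forced by the sign of the tangent weight of $C_a$ at one end), and a \emph{global} unbrokenness argument showing the tangent-weight signs are coherent along the chain, so the comparisons all point the same way.

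First I would treat a single noncontracted component $C_a$. By Lemmas~\ref{nonpunctualorbit} and \ref{punctualorbit}, the endpoints $f(q_{a-1})$ and $f(q_a)$ differ only in a single index $k$ (or two consecutive indices $k,k+1$ in the nonpunctual case), and the change in the partition data is governed by the combinatorial move $\lambda_k = \mu\cup\rho$, $\lambda_{k+1} = \nu'$ versus $\eta_k = \mu$, $\eta_{k+1} = (\nu\cup\rho)'$ (nonpunctual), or a move within a single $\Hb(\CC^2)$-factor (punctual). In either case one checks that the two multipartitions are $\succeq$-comparable: in the nonpunctual case the block sizes $|\lambda_k|, |\lambda_{k+1}|$ strictly change (one goes up, one goes down) and the lexicographic-on-sizes clause of $\succeq$ decides the comparison; in the punctual case the block sizes are all equal and one compares $\epsilon$ of the single changed partition — here the key computation is that moving a box in a Young diagram in the direction of a $T$-orbit changes $\epsilon(\lambda) = \sum\binom{\lambda^{(i)}}{2}$ monotonically, and the direction of that change is exactly governed by the sign datum $a\cdot b \le 0$ of the tangent weight $a w_L^k + b w_R^k$ at $f(q_{a-1})$. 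So: \emph{the sign of the $\succeq$-comparison across $C_a$ is determined by the sign of the tangent weight of $C_a$ at its ``incoming'' node.}

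Next I would use unbrokenness to glue these local comparisons. Because $f$ is unbroken, whenever two noncontracted components $C_a, C_{a+1}$ meet directly at a node $q_a$, their tangent weights there satisfy $w_{C_a, q_a} + w_{C_{a+1}, q_a} = 0$; and whenever they are separated by an $f$-contracted subcurve, that subcurve has (after removing it) at most one noncontracted component on each side, so the chain structure is preserved and the weights on the two sides of the contracted piece still match up with opposite signs at the relevant nodes. Combined with the local statement above, this forces the $\succeq$-direction to be the same across every noncontracted edge of the chain — an increase followed by an increase, never an increase followed by a decrease. Finally I would handle the contracted components: an $f$-contracted $C_b$ does not change the fixed point (both its nodes map to the same $[J_{\vec\lambda}]$), so it contributes a trivial step in $S$ and does not affect monotonicity; and in the second case ($z_1, z_2 \in C_1$ with $C_1$ contracted) the sequence $S$ simply omits the marked points and the same argument applies to $f(q_1), \dots, f(q_r)$. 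Stringing these together, $S$ is monotone.

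The main obstacle I expect is the punctual-orbit bookkeeping: verifying cleanly that the $\epsilon$-function is strictly monotone along a one-dimensional punctual $T$-orbit in $\Hb(\CC^2)$ \emph{with the correct sign} relative to the $a\cdot b \le 0$ condition, and matching our transpose conventions so that this sign is consistent with the nonpunctual block-size comparison. This is the point where one must be careful that ``increasing'' means the same thing for both types of edges; once the sign conventions are pinned down, the unbrokenness gluing is essentially formal.
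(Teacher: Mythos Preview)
Your overall strategy matches the paper's: reduce to a local comparison along each noncontracted component, then use the unbroken condition to force a single global sign on the tangent weights (the paper phrases this as: the tangent weight to $C_k$ at $q_{k-1}$ is $\equiv a t_1 \bmod (t_1+t_2)$ with the \emph{same} $a$ for all $k$). The nonpunctual case goes exactly as you describe.

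Where you and the paper diverge is precisely the obstacle you flag. You propose to analyze punctual $T$-orbits in $\Hb(\CC^2)$ combinatorially, tracking how $\epsilon$ changes under the ``box move'' encoded by the orbit. This is risky: a one-dimensional punctual $T$-orbit need not correspond to moving a single box, so a direct combinatorial check is not straightforward, and the condition $a\cdot b \le 0$ by itself does not pin down the sign of $\epsilon(\lambda)-\epsilon(\eta)$ (you still need to know which of $a,b$ is positive). The paper sidesteps all of this with one clean observation: for a punctual component $C_k$ one has $D\cdot C_k > 0$, and by localization
\[
D\cdot C_k \;=\; \frac{c(\lambda)-c(\eta)}{a\,w_L^l + b\,w_R^l},
\qquad c(\lambda)=D|_{J_\lambda}=\sum_{(i,j)\in\lambda}\bigl((i-1)w_L^l+(j-1)w_R^l\bigr),
\]
so the sign of $c(\lambda)-c(\eta)$ is forced by the (already globally fixed) sign of the tangent weight. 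Since the coefficient of $w_L^l$ in $c(\lambda)$ is exactly $\epsilon(\lambda)$, this immediately yields $\epsilon(\lambda)\ge\epsilon(\eta)$ in the correct direction, with no combinatorics of orbits required. Plugging this argument in for your punctual step completes your proof along the paper's lines.
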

\begin{proof}
This follows from the description of the one-dimensional orbits.
We discuss the case where $z_{1}$ and $z_{2}$ are on noncontracted
components.  Assume that the (fractional) tangent weight to
$C_{1}$ at $z_{1}$ is congruent to $a t_{1} \mod (t_1+t_2)$ with
$a< 0$. Because of the unbroken condition, the sum of the tangent
weights at each node $q_{k}$ must be proportional to
$t_{1}+t_{2}$, so the tangent weight to $C_{k}$ at $q_{k-1}$ is $a
t_{1}\mod(t_1+t_2)$ for all $k$.

For a nonpunctual component $C_{k}$, it is a multiple cover of one
of the orbits described in Lemma~\ref{nonpunctualorbit}.  Since $a < 0$, the
tangent space at $q_{k-1}$ maps to a $T$-fixed point with tangent
weight a positive rational multiple $w_{R}^{l}$ for some $l$,
since $w_{L}^{l} = bt_{1} \mod (t_1+t_2)$ with $b > 0$.  The length of the
support of the subscheme at $p_{l}$ decreases and increases at
$p_{l+1}$, so we have $f(q_{k-1}) \succeq f(q_{k})$.

For a punctual component $C_{k}$ connecting partitions $\lambda$
and $\eta$ supported at the fixed point $p_{l}$, the tangent
weight at $q_{k-1}$ is a positive rational multiple of $a w_{L}$
degree with respect to $D$ is given by the localization expression
$0 < D \cdot C_{k} = \frac{c(\lambda) - c(\eta)}{a w_{L}^{l} +
bw_{R}^{l}}$ where $c(\lambda)$ is the content
$$c(\lambda) = D|_{J_{\lambda}} = \sum_{(i,j) \in \lambda} (i-1)w_{L}^{l}
+(j-1)w_{R}^{l}.$$ The coefficient of $w_{L}^{l}$ in $c(\lambda)$
is precisely $\epsilon(\lambda)$, which implies $\epsilon(\lambda)
\geq \epsilon(\eta)$, so again we have $f(q_{k-1}) \succeq
f(q_{k})$.
\end{proof}

The main application of this unbroken analysis is the following
vanishing proposition for two-point fixed correlators.
\begin{prop}\label{vanishing}
Given two distinct $(n+1)$-tuples of partitions $\vec{\lambda}
\neq \vec{\eta}$ such that either $|\lambda_{i}| = |\eta_{i}|$ or
$|\lambda_{j}| = |\eta_{j}|$ then
$$
\langle  [J_{\vec{\lambda}}]|\Theta_{[i,j]}| [J_{\vec{\eta}}]\rangle = 0
\mod (t_1+t_2)^{2}.
$$
\end{prop}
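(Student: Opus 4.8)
The plan is to prove the vanishing statement by a localization argument, using the broken/unbroken decomposition and the partial order $\succeq$ from Lemma~\ref{partialordering}. First I would reduce to the $T^{\pm}$-equivariant computation: by Lemma~\ref{div} and the discussion at the end of Section~\ref{bases}, it suffices to show $\left([J_{\vec\lambda}],[J_{\vec\eta}]\right)^{\Hb}_{[i,j]} \equiv 0 \bmod (t_1+t_2)$, i.e.\ that the reduced two-point invariant vanishes modulo $(t_1+t_2)$. By Lemma~\ref{vanish broken} the broken contributions vanish, so I only need to analyze the unbroken $T^{\pm}$-fixed loci, and since every nonempty unbroken component contains a $T$-fixed map, it is enough to argue that no $T$-fixed unbroken map $f\colon(C,z_1,z_2)\to\Hb_m(\An)$ can connect $[J_{\vec\lambda}]$ to $[J_{\vec\eta}]$ under the stated numerical hypotheses.

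Here is the key point. Suppose such an unbroken $f$ exists, with domain a chain $C = C_1\cup\dots\cup C_r$; associated to it is the sequence $S$ of $T$-fixed points traversed (the endpoints $[J_{\vec\lambda}],[J_{\vec\eta}]$ together with the images of the nodes). By Lemma~\ref{partialordering}, $S$ is monotone for $\succeq$; WLOG it is nonincreasing, so $\vec\lambda \succeq \vec\eta$. Now I track how the block sizes $|\cdot_k|$ change along $S$. Each noncontracted component $C_l$ is, by Lemma~\ref{nonpunctualorbit}, a multiple cover of a nonpunctual orbit, which moves exactly one box of length from block $k$ to block $k+1$ for a single index $k$; each contracted component leaves all block sizes unchanged (Lemma~\ref{punctualorbit}). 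Because the curve class has support $[i,j]$, every index $k$ with $i\le k\le j-1$ must actually be ``crossed'' by some nonpunctual component (otherwise $(1,\omega_k)\cdot\beta = 0$ and the support is strictly smaller). In particular $|\lambda_i| \ne |\eta_i|$ \emph{and} $|\lambda_j| \ne |\eta_j|$ for any unbroken map contributing to $\Theta_{[i,j]}$: indeed, since block $i$ has a box moved out across the edge $E_i$ at least once and (the chain being monotone) never moved back in, we get $|\lambda_i| > |\eta_i|$ strictly; symmetrically block $j$ receives a box across $E_{j-1}$ and never gives one back, so $|\lambda_j| < |\eta_j|$ or $|\lambda_j| > |\eta_j|$, in any case $|\lambda_j|\ne|\eta_j|$. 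This directly contradicts the hypothesis that $|\lambda_i| = |\eta_i|$ or $|\lambda_j| = |\eta_j|$, so no unbroken locus exists and the correlator vanishes $\bmod\,(t_1+t_2)^2$.

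To make the ``never moved back'' claim precise I would argue as follows: in a nonincreasing chain $S$, the first coordinate $k$ at which block sizes differ can only decrease step by step in $|\eta_k|$ relative to $|\lambda_k|$, so once a box leaves block $i$ it is the left-most block affected and the monotonicity of $\succeq$ (which compares the \emph{smallest} index $k$ with $|\lambda_k|\ne|\eta_k|$) forbids it from being replenished; the symmetric statement at block $j$ uses that $E_{j-1}$ is the right-most exceptional curve in the support, so no box can leave block $j$ to the right. The main obstacle, and the step deserving the most care, is this bookkeeping of box-flow versus the partial order: one must be sure that ``support $[i,j]$'' genuinely forces a crossing at \emph{both} ends $i$ and $j-1$, and that monotonicity of $S$ (not merely of the endpoints) rules out any cancellation of crossings. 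Once that is pinned down, the rest is a direct consequence of Lemmas~\ref{vanish broken}, \ref{nonpunctualorbit}, \ref{punctualorbit}, and~\ref{partialordering}, combined with the reduced-class divisibility of Lemma~\ref{div}.
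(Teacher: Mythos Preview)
Your proposal is correct and follows essentially the same approach as the paper's proof: reduce to the nonexistence of unbroken $T$-fixed maps via Lemma~\ref{vanish broken}, then use the monotonicity of the sequence $S$ from Lemma~\ref{partialordering} together with the support constraint $[i,j]$ to force $|\lambda_i|\ne|\eta_i|$ and $|\lambda_j|\ne|\eta_j|$, contradicting the hypothesis. (One harmless inaccuracy: a single nonpunctual $T$-orbit can transfer $|\rho|\ge 1$ boxes rather than exactly one, cf.\ Lemma~\ref{nonpunctualorbit}, but this does not affect the argument.)
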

\begin{proof}
Since we are considering curve classes $\beta$ with support equal
to $[i,j]$, we have $(1,\omega_{k})\cdot\beta=0$ for $k < i$ and
$(1,\omega_{i})\cdot \beta > 0$. Suppose there exists an unbroken
map $f$; since $\vec{\lambda} \neq \vec{\eta}$, the two marked
points must be at opposite ends of the chain $C$.  The first
condition and the description of punctual and nonpunctual
$T$-orbits ensures that $|\lambda_{k}| = |\eta_{k}|$ for $k < i$.
Similarly, the second condition implies that the length of the
subscheme supported at $p_{i}$ either increases or decreases at
some point in the chain.  Lemma \ref{partialordering} implies that
$|\lambda_{i}| \neq |\eta_{i}|$ and the analogous statement for
$j$.  This contradicts the hypothesis, so all $T$-fixed maps are
broken.
\end{proof}

\subsection{Proof of Proposition \ref{fixedlemma1}}

The proof of this proposition and the next will only use the
factorization and vanishing statements of Propositions \ref{factorization}
and \ref{vanishing}.  

Recall that our invariants take values in $R = \QQ[t_1,t_2]_{(t_{1}+t_{2})}$.We define an equivalence relation $\sim$ on elements of
$R((q))[[s_{1},\dots,s_{n}]]$ so that
$$f(t_1,t_2,q,s_1,\dots,s_n) \sim 0$$
if $f \mod (t_1+t_2)^{2}$ is an $R$-linear combination of
 $\langle [J_{\vec{\kappa}}]| \Theta_{i,j}|[J_{\vec{\sigma}}]\rangle$
for $\Hb_{m^{\prime}}(\An)$ with $m^{\prime} < m$.

In these terms, the factorization proposition can be restated as
follows.
\begin{lem}\label{fact2}
Given $k > 0, \vec{\mu} \in H^{\ast}(\Hb_{m}(\An),\QQ), \vec{\nu}
\in H^{\ast}(\Hb_{m-k}(\An),\QQ)$, we have
$$\langle\vec{\mu} | \Theta_{i,j} |  \pp_{-k}(1)\vec{\nu}\rangle \sim 0.$$
\end{lem}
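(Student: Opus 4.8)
The plan is to read this lemma directly off the exact factorization identity of Proposition~\ref{factorization}, after re-expanding both insertions in the Nakajima basis attached to $\{1,\omega_1,\dots,\omega_n\}$. Two-point correlators are linear in each slot, and the expansion of an arbitrary class of $\Hb_m(\An)$ in this basis has coefficients in $R$ (no denominator divisible by $(t_1+t_2)$, by Section~\ref{bases}); since the relation $\sim$ is $R$-linear, it suffices to treat basis vectors $\vec\mu = \mu(1)\prod_i\lambda_i(\omega_i)$ and $\vec\nu = \nu(1)\prod_i\rho_i(\omega_i)$. For such a $\vec\nu$, the vector $\pp_{-k}(1)\vec\nu$ is, up to a ratio of the integer normalizations $\mathfrak{z}$, the basis vector $(\nu\cup(k))(1)\prod_i\rho_i(\omega_i)$ obtained from $\vec\nu$ by inserting one further $1$-labelled part of size $k$.

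First I would observe that Proposition~\ref{factorization}, although stated for $\Theta_+$, gives the same identity for each $\Theta_{i,j}$: it is an equality of generating series in $q,s_1,\dots,s_n$, and $\Theta_{i,j}$ is the subsum of $\Theta_+$ collecting the monomials $s^\alpha$ with $\mathrm{supp}(\alpha)=[i,j]$, so one simply extracts those coefficients on both sides. Applied to the basis vectors above this yields
$$
\langle\,\vec\mu\,|\,\Theta_{i,j}\,|\,\pp_{-k}(1)\vec\nu\,\rangle
\;=\; \langle\,\mu(1)\,|\,(\nu\cup(k))(1)\,\rangle\cdot
\langle\,\textstyle\prod_i\lambda_i(\omega_i)\,|\,\Theta_{i,j}\,|\,\prod_i\rho_i(\omega_i)\,\rangle ,
$$
up to the harmless rational rescaling just mentioned. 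The first factor on the right lies in $R$ and vanishes unless $\mu = \nu\cup(k)$ as partitions; when it is nonzero, $|\mu| = |\nu|+k$, so the residual correlator is a two-point correlator for $\Hb_{m'}(\An)$ with $m' = m-|\mu| = m-|\nu|-k \le m-k < m$, using $k>0$.

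It then remains to rewrite the residual correlator, which lives on a Hilbert scheme of $m'<m$ points, in the fixed-point basis: expanding $\prod_i\lambda_i(\omega_i)$ and $\prod_i\rho_i(\omega_i)$ in the basis $\{[J_{\vec\kappa}]\}$ of $\Hb_{m'}(\An)$ contributes only coefficients in $R$ by Section~\ref{bases}, so the whole expression becomes an $R$-linear combination of correlators $\langle[J_{\vec\kappa}]\,|\,\Theta_{i,j}\,|\,[J_{\vec\sigma}]\rangle$ with $m'<m$; this is exactly $\langle\vec\mu\,|\,\Theta_{i,j}\,|\,\pp_{-k}(1)\vec\nu\rangle \sim 0$ (in fact an equality, not merely a congruence modulo $(t_1+t_2)^2$). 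I do not anticipate any genuine obstacle: the points requiring care are only that every change of basis invoked is $(t_1+t_2)$-integral — already recorded in Section~\ref{bases} — and the elementary count of points showing $m'<m$.
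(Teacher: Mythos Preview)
Your argument is correct and is exactly the approach the paper takes: expand both insertions in the Nakajima basis with respect to $\{1,\omega_1,\dots,\omega_n\}$ and invoke Proposition~\ref{factorization}, observing that the extra $1$-labelled part of size $k>0$ forces the residual correlator to live on $\Hb_{m'}(\An)$ with $m'<m$. The paper's proof is a one-line pointer to Proposition~\ref{factorization}; you have simply unpacked the bookkeeping (the passage from $\Theta_+$ to $\Theta_{i,j}$, the count $m'=m-|\mu|\le m-k$, and the final change back to the fixed-point basis), all of which is routine.
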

\begin{proof}
If we write $\vec{\mu},\vec{\nu}$ in terms of the Nakajima basis
with respect to $1, \omega_{1},\dots, \omega_{n}$, this follows
immediately from the statement of proposition~\ref{factorization} since $k > 0$.
\end{proof}

In the case of $\mathcal{A}_{1}$, we will restate this lemma in
terms of the symmetric function notation for elements of
$\mathcal{F}_{\mathcal{A}_{1}}$. Since
$$\pp_{-k}(1) = \frac{1}{2t_1(t_2-t_1)}\pp_{-k}([p_{1}]) +
\frac{1}{2t_2(t_1-t_2)}\pp_{-k}([p_{2}])$$ and each of these
fixed-point Nakajima operators corresponds to multiplication by
$\mathsf{p}_{k}(z)$, we have
$$\langle (\mathsf{p}_{\mu}(z)\cdot g(z)) \otimes h(z) | \Theta_{+}| \bullet\rangle \sim (-1)^{l(\mu)}
\langle g(z) \otimes (\mathsf{p}_{\mu}(z)\cdot h(z))| \Theta_{+}|
\bullet\rangle,$$ where $\bullet$ denotes any cohomology class.

If we apply Lemma \ref{fact2} to invariants with insertions in the
Nakajima basis with respect to the basis $\{1, p_{1}, \dots,
\widehat{p_{j}}, p_{n}\}$, this shows that all two-point
correlators are determined by two-point correlators
$$ \langle [J_{\vec{\lambda}}]|\Theta_{i,j}| [J_{\vec{\eta}}]\rangle $$
with $\lambda_{j}=\emptyset, \eta_{j} = \emptyset$. By proposition \ref{vanishing},
 we must have $\vec{\lambda} = \vec{\eta}$.  For
$\mathcal{A}_1$, this concludes the proof.

For the case of general $\An$, we have to argue further as
follows. Given two partitions $\pi$ and $\pi^{\prime}$ of size $a$
and $a-1$ respectively, we say that
 $$\pi \searrow \pi^{\prime}$$ if their
 Young diagrams differ by the removal of a single box.
We already have that $\lambda_{j} = \emptyset$; suppose further
that there exists $k \ne i,j$ for which $\lambda_{k} \ne
\emptyset$.  Then there exists a multipartition
$\vec{\eta}$ such that
$$\lambda_{r} = \eta_{r}, \quad r\ne k$$
and
$$\lambda_{k} \searrow \eta_{k}.$$

If we use symmetric function notation for the fixed-point basis,
then we have that modulo $(t_1+t_2)$
$$\pp_{-1}(1)\left( \bigotimes_{r} \mathsf{J}_{\eta_{r}}(z^{(r)})\right) 
\equiv (n+1)t_{1} \sum_{s=1}^{n+1}
\bigotimes_{r\ne s} \mathsf{J}_{\eta_{r}}(z^{(r)}) \otimes \left((\mathsf{p}_{1}(z^{(s)})\cdot
\mathsf{J}_{\eta_{s}}(z^{(s)})\right).$$ 
Moreover, if we expand out
these products in the basis of Jack polynomials, we have that
$$\mathsf{p}_{1}(z)\cdot \mathsf{J}_{\eta_{k}}(z) = c\cdot \mathsf{J}_{\lambda_{k}}(z) + \dots$$
with $c \not\equiv 0 \mod t_{1}+t_{2}$. This implies that
\begin{align*}
0 &\sim \langle \bigotimes \mathsf{J}_{\lambda_{r}}(z^{(r)}) | \Theta_{i,j} |
\pp_{-1}(1)\cdot \bigotimes
\mathsf{J}_{\eta_{r}}(z^{(r)})\rangle \\
&\sim\sum_{s=1}^{n+1} \langle \mathsf{J}_{\lambda_{r}}(z^{(r)})|\Theta_{i,j}|
\bigotimes_{r\ne s}
\mathsf{J}_{\eta_{r}}(z^{(r)})\otimes (\mathsf{p}_{1}(z)\cdot \mathsf{J}_{\eta_{s}}(z))\rangle\\
&\sim c \langle \bigotimes \mathsf{J}_{\lambda_{r}}(z)| \Theta_{i,j}|
\bigotimes \mathsf{J}_{\lambda_{r}}(z)\rangle.
\end{align*}
The last equality follows from Proposition \ref{vanishing}, since the
number of points supported at either $p_{i}$ or $p_{j}$ is fixed.

\subsection{Proof of Proposition \ref{fixedlemma2}}

We will just prove this for $\mathcal{A}_{1}$, since the case of
general $n$ is essentially the same.  We will write everything
using symmetric function notation for fixed-point basis elements.  Moreover, since we are ignoring
factors of $t_{1}+t_{2}$, it is convenient to work with Schur
polynomials instead of Jack polynomials, since the branching rules
are easier to describe. In these terms, the goal of this section
is to calculate
$$\langle \mathsf{s}_{\lambda}(z)\otimes 1|\Theta_{1,2}| \mathsf{s}_{\lambda}(z)\otimes 1\rangle$$
in terms of
$$\langle \mathsf{s}_{\rho}(z)\otimes 1| \Theta_{1,2}| \mathsf{s}_{\kappa}(z)\otimes \mathsf{s}_{(1)}(z) \rangle$$
and
$$\langle \mathsf{s}_{\theta}(z)\otimes 1|\Theta_{1,2}|\mathsf{s}_{\sigma}(z)\otimes \mathsf{s}_{(1)}(z)\rangle$$
where $\rho = (m), \kappa = (m-1), \theta = (1^{m}), \sigma =
(1^{m-1})$.

We first assume that there are two distinct partitions,
$\lambda_{1}, \lambda_{2}$ of size $m-1$ such that
$$\lambda \searrow \lambda_{1}, \lambda \searrow \lambda_{2}$$
This happens if and only if $\lambda \ne (a, a, \dots, a)$ for
some $a$ dividing $m$.

We then have the equality
\begin{align*}
0 &\sim \langle \mathsf{s}_{\lambda_{1}}(z)\otimes \mathsf{s}_{(1)}(z) | \Theta_{1,2}|
\mathsf{s}_{\lambda_{2}}(z)\otimes
\mathsf{s}_{(1)}(z)\rangle\\
 & \sim \langle (\mathsf{s}_{(1)}(z)\cdot \mathsf{s}_{\lambda_{1}}(z))\otimes 1 | \Theta_{1,2}| 
 (\mathsf{s}_{(1)}(z)\cdot \mathsf{s}_{\lambda_{2}}(z))\otimes 1\rangle\\
 & \sim \sum_{\eta_{1}\searrow \lambda_{1}} \sum_{\eta_{2}\searrow \lambda_{2}}
 \langle \mathsf{s}_{\eta_{1}}(z)\otimes 1 |\Theta_{1,2}| \mathsf{s}_{\eta_{2}}(z)\otimes 1\rangle\\
 & \sim \langle \mathsf{s}_{\lambda}(z)\otimes 1 | \Theta_{1,2}| \mathsf{s}_{\lambda}(z)\otimes 1\rangle.
 \end{align*}
 The first equality follows from Proposition \ref{vanishing}, since $\lambda_{1}\ne
\lambda_{2}$.
 The second equality is the restatement of the factorization lemma in terms of symmetric
functions given after lemma \ref{fact2}.  The third equality is 
the branching rule formula for multiplying
 Schur functions.  The fourth equality follows again from the vanishing statement and the fact
 that $\lambda$ is the unique partition $\eta$ of $m$ such that
 $$\eta\searrow \lambda_{1}, \quad \eta\searrow \lambda_{2}.$$

This reduces us to the case where $\lambda = (a, a, \dots, a)$
for some $a | m$ or, equivalently, where the Young diagram of
$\lambda$ is rectangular.  If $a \ne 1, m$, there are two distinct
ways of removing two boxes from the Young diagram of $\lambda$:
$$\lambda_{1}=(a, a, \dots, a-1, a-1), \quad \lambda_{2}=(a, a, \dots, a, a-2).$$
We then again have
\begin{align*}
0 &\sim \langle \mathsf{s}_{\lambda_{1}}(z)\otimes \mathsf{s}_{(2)}(z)|\Theta_{1,2}|
\mathsf{s}_{\lambda_{2}}(z)\otimes
\mathsf{s}_{(1,1)}(z) \rangle \\
&\sim \langle \mathsf{s}_{(1,1)}(z)\cdot \mathsf{s}_{\lambda_{1}}(z)\otimes 1\
|\Theta_{1,2}| \mathsf{s}_{(2)}(z)\cdot
\mathsf{s}_{\lambda_{2}}(z)\otimes 1\rangle \\
& \sim \langle \mathsf{s}_{\lambda}(z)\otimes
1|\Theta_{1,2}|\mathsf{s}_{\lambda}(z)\otimes 1\rangle
\end{align*}
again using Proposition \ref{vanishing} and the branching rule for Schur
functions.

This leaves $\lambda = \rho$ or $\lambda = \theta$.  In these
cases, we have for instance
$$\langle \mathsf{s}_{\theta}(z)\otimes 1|\Theta_{1,2}|\mathsf{s}_{\sigma}(z)\otimes \mathsf{s}_{1}(z)\rangle \sim (-1)\cdot
\langle \mathsf{s}_{\theta}(z)\otimes 1|\Theta_{1,2}|\mathsf{s}_{\theta}(z)\otimes
1\rangle.$$ We argue similarly in the case of $\lambda = \rho.$

\subsection{Proof of Proposition \ref{fixedlemma3}}

In order to prove propositon \ref{fixedlemma3}, we will explicitly
calculate the two invariants

$$\langle [J_{\vec{\theta}}]|\Theta_{i,j}| [J_{\vec{\sigma}}]\rangle,
\langle [J_{\vec{\rho}}]|\Theta_{i,j}| [J_{\vec{\kappa}}]\rangle \mod
(t_1+t_2)^{2}.$$

The main observation in each case is that, for every curve class,
there is at most one unbroken $T$-fixed curve joining the two fixed
point insertions in each of these invariants.   If we assume this
for now, then since every positive-dimensional compact variety with a
$T$-action has at least two fixed points, this curve is the entire
unbroken $T^{\pm}$-fixed locus. By Lemma~\ref{vanish broken}, we have
\begin{align*}
\left( [J_{\vec{\theta}}], [J_{\vec{\sigma}}]\right)^{T} \mod 
(t_1+t_2) &\equiv \left(
[J_{\vec{\theta}}], [J_{\vec{\sigma}}]\right)^{T^{\pm}}\\
&= \left( [J_{\vec{\theta}}], [J_{\vec{\sigma}}]\right)^{T^{\pm},
\mathrm{unbroken}}
\end{align*}
where curved brackets denote the reduced virtual class invariants.
These invariants differ from the usual invariants by a factor of
$(t_{1}+t_{2})$.  Therefore, as long as we work $\mod
(t_{1}+t_{2})^{2}$, it suffices to calculate the localization
residue of the unique unbroken $T$-fixed curve.

We begin by explaining the calculation of the first invariant.

\begin{lem}
The only unbroken maps joining $J_{\vec{\theta}}$ to
$J_{\vec{\sigma}}$ have multidegree given by the monomial
$(q^{1-m}s_{i}\cdot\dots\cdot s_{j-1})^{d}$ for $d \geq 1$.
Moreover, for each $d$, there is a unique such map.
\end{lem}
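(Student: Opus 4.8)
The goal is to classify the unbroken $T$-fixed maps $f:(C,z_1,z_2)\to\Hb_m(\An)$ connecting $J_{\vec\theta}$ to $J_{\vec\sigma}$, where $\theta_i=(1^m)$ and $\sigma_i=(1^{m-1}),\sigma_j=(1)$, and to read off their multidegree. My strategy is to run the structure theory already established: an unbroken $f$ has domain a chain $C=C_1\cup\dots\cup C_r$, the sequence of fixed-point values at the nodes is monotone with respect to the ordering $\succeq$ (Lemma~\ref{partialordering}), and each link is either a multiple cover of one of the one-dimensional orbits described in Lemma~\ref{nonpunctualorbit} or a punctual orbit as in Lemma~\ref{punctualorbit}. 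First I would record what $J_{\vec\theta}$ and $J_{\vec\sigma}$ look like: both have all components empty except at $p_i$ (and, for $\vec\sigma$, a single box at $p_j$). Since we are isolating curve classes with support exactly $[i,j]$, the chain must genuinely travel from $p_i$ to $p_j$, so at least one link must transport a box off $p_i$ and eventually deposit it at $p_j$; every other component's partition stays $\emptyset$ throughout.

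The key constraint is that $\theta_i=(1^m)$ is a single \emph{column}, which is extremal for the ordering via the statistic $\epsilon$. Concretely, I would argue: at each nonpunctual step the partition at $p_k$ loses boxes and at $p_{k+1}$ gains boxes, and by Lemma~\ref{nonpunctualorbit} the partition at $p_k$ before the step has the form $\mu\cup\rho$ with $(\nu\cup\rho)'$ appearing on the other side. Starting from the pure column $(1^m)$ at $p_i$, the only way to peel boxes is to take $\mu=(1^{m-1})$, $\rho=(1)$, $\nu=\emptyset$, so a single box moves to $p_{i+1}$ forming $(1)^{\,\prime}=(1)$; iterating, a single box marches along the chain $p_i\to p_{i+1}\to\dots\to p_j$ via the unique orbits of Lemma~\ref{nonpunctualorbit} (the $l(\rho)=1$ uniqueness clause applies at every step), and at $p_i$ we are left with $(1^{m-1})$. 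This forces $r=j-i$, with the $\ell$-th link being the unique nonpunctual orbit from $p_{i+\ell-1}$ to $p_{i+\ell}$ moving one box, possibly multiply covered of degree $d_\ell\ge 1$. No punctual links can occur: a punctual link fixes which $p_k$ is involved (Lemma~\ref{punctualorbit}) and would change $\epsilon$ of a single-box or near-single-box partition, but all intermediate partitions here are columns $(1^a)$, which have $\epsilon=0$ and admit no punctual moves of positive degree consistent with monotonicity and $D\cdot C>0$.

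Next I would compute the multidegree. The box-moving orbit from $p_k$ to $p_{k+1}$ has class $E_k$ (this is essentially the content computation: $(1,\omega_k)\cdot[C]=1$ on that orbit, $0$ on the others, so $[C]=E_k$), hence a degree-$d_\ell$ cover of the $\ell$-th link contributes $d_\ell E_{i+\ell-1}$ to the $\An$-class and contributes to the $D$-degree via the content difference $c((1^m))-c((1^{m-1}))$ along the peeling, which near $p_i$ gives the exponent $1-m$ of $q$ per unit; assembling, the total multidegree is $(q^{\,1-m}\,s_i\cdots s_{j-1})^{d}$ precisely when all $d_\ell$ are forced to a common value $d$ — and the unbroken condition at each node ($w_{P,s}+w_{P',s}\equiv 0$) together with the explicit tangent weights forces exactly that, since the gluing weights at consecutive nodes match only when the covering degrees agree. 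For each $d\ge 1$ the map is then unique because each link is uniquely a $d$-fold cover of a unique orbit and the chain structure is rigid. I would verify the $q$-exponent bookkeeping by a direct localization/content check at $p_i$.

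\textbf{Main obstacle.} The delicate point is proving there are \emph{no} punctual links and \emph{no} other nonpunctual transport patterns — i.e. that the peeling is literally one box at a time with no excursions — and that the covering degrees of all $j-i$ links are forced equal. The first part is the careful application of Lemmas~\ref{nonpunctualorbit}, \ref{punctualorbit}, \ref{partialordering} to the very constrained endpoints (pure columns), using that $\epsilon$ of every intermediate partition is pinned; the second part is the tangent-weight matching at the nodes, which is where the exact formulas for $w^L_k,w^R_k$ enter and where one must be careful that congruence mod $(t_1+t_2)$ still suffices to pin the degrees. Once these are in hand the multidegree formula is a short content computation.
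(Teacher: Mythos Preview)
Your approach is essentially the paper's: rule out punctual links via the strict minimality of $\epsilon$ on columns, force the unique chain of single-box nonpunctual orbits $\theta\to\theta^{(1)}\to\cdots\to\sigma$, read off the orbit degrees by content, and pin all covering degrees to a common $d$ via the unbroken tangent-weight condition at the nodes. The one point worth sharpening (the paper is terse here too) is why exactly one box leaves $p_i$ on the first link: this is forced not by $\epsilon$ alone but by monotonicity --- along a $\succeq$-decreasing chain the size at $p_i$ is non-increasing --- together with the endpoint constraint $|\sigma_i|=m-1$, so the first nonpunctual step must have $\rho=(1)$.
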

\begin{proof}
We first show there is a unique chain of $T$-orbits that is
decreasing with respect to the partial ordering $\succeq$.  Let
$f: C \rightarrow \Hb(\An)$ be an unbroken $T$-fixed map with
$$C = C_{1} \cup C_{2} \cup \dots \cup C_{r}$$
and let $S = \{J_{\theta} , f(q_{1}), \dots,f(q_{r-1}),
J_{\sigma}\}$ be the decreasing sequence of fixed points associated
to the marked points and nodes. For $1\leq k \leq j-i$, let
$\theta^{(k)}$ be the multipartition defined by
$$\theta^{(k)}_{i} = (1^{m-1}), \theta^{(k)}_{i+k} = (1), \theta^{(k)}_{l} = \emptyset,\quad l
\ne i, i+k.$$ We claim that
$$S = \{ \theta, \theta^{(1)}, \theta^{(2)},\dots, \theta^{(j-1)} = \sigma\}.$$

Indeed, let $L_{1}, \dots L_{r}$ denote the $T$-orbits that are
the reduced images of $C_{k}$. First, $L_{1}$ must be a
nonpunctual $T$-orbit with a single point moving from $p_{i}$ to
$p_{i+1}$, so $f(q_{1}) = \theta^{(1)}$.  This is due to the fact
that the function $\epsilon$ achieves a strict minimum on
$(1^{m})$.  For this same reason, all subsequent $L_{k}$ are
nonpunctual curves as well which forces $f(q_{k}) = \theta^{(k)}.$
The uniqueness of these orbits follows from the full statement of
Lemma \ref{nonpunctualorbit}.

The degrees of each of these orbits is calculated as follows
\begin{gather*}
D \cdot L_{1} = \frac{-\binom{m}{2}w_{R}^{i} +
\binom{m-1}{2}w_{R}^{i}}{w_{R}^{i}} = 1-m
\\
(1,\omega_{i})\cdot L_{1} = 1, (1,\omega_{l})\cdot L_{1} = 0
\end{gather*}
Similarly, we have
$$D\cdot L_{k} = 0, (1,\omega_{k})\cdot L_{k} = 1.$$

Suppose that the degree of $C_{1}$ over $L_{1}$ is $d$.  The
tangent weights at $f(q_{k})$ are $w_{L}^{i+k}$ and $w_{R}^{i+k}$,
which add to $t_{1}+t_2$. Since the fractional tangent weights to
$C$ at each node $q_{k}$ must add to a multiple of $t_1+t_2$, we
have that the degree of $C_{k}$ over $L_{k}$ is $d$ for all $k$.
Therefore, the only unbroken fixed maps have degree given by
$q^{d(1-m)}\cdot(s_{i}\cdot\dots s_{j-1})^{d}$ and, for each $d$,
there is a unique such map $f$.
\end{proof}

This calculation of the residue at this unbroken map $f$ factors
into contributions from each component $C_{k}$, each node $q_{k}$,
and each marked point at $J_{\vec{\theta}}$ and
$J_{\vec{\sigma}}$. For more details on how to determine these
contributions, we refer the reader to \cite{grabpan}.

\begin{enumerate}
\item[$\bullet$]Contribution from $C_{1}$:

This contribution is given by the ratio of equivariant Euler
classes
$$
\frac{1}{d}\cdot \frac{e(H^{1}(C_{1},
f^{\ast}(T_{\Hb(\An)})))}{e(H^{0}(C_{1},
f^{\ast}(T_{\Hb(\An)}))-0)}.
$$
The $\frac{1}{d}$ factor arises from the automorphism of $C_{1}$
over $L_{1}$. The $H^{0}$-term has a single trivial weight $0$
corresponding to reparameterization that we remove by hand in the
above expression.  The restriction of the tangent bundle to the
orbit $L_{1}$ is given by
$$T_{\Hb(\An)}\mid_{L_{1}} = \mathcal{O}(2)\oplus \mathcal{O}(-2)\oplus
\mathcal{O}(1)^{m-1} \oplus \mathcal{O}(-1)^{m-1}.$$ The $T$
weights over each fixed point of these bundles are given by the
following table.
\begin{center}
  \begin{tabular}{c|c|c}
 & $J_{\vec{\theta}}$ & $J_{\vec{\sigma}}$ \\
\hline
$\mathcal{O}(2)$ & $w_{R}^{i}$ & $-w_{R}^{i}$ \\
$\mathcal{O}(-2)$ & $w_{L}^{i}$ & $w_{L}^{i}+2w_{R}^{i}$ \\
$\mathcal{O}(1)$ & $2 w_{R}^{i}$ & $w_{R}^{i}$ \\
$\vdots$ & $\vdots$ & $\vdots$ \\
$\mathcal{O}(1)$ & $m w_{R}^{i}$ & $(m-1)w_{R}^{i}$\\
$\mathcal{O}(-1)$ & $w_{L}^{i} - w_{R}^{i}$ & $w_{L}^{i}$ \\
$\vdots$ & $\vdots$ & $\vdots$ \\
$\mathcal{O}(-1)$ & $w_{L}^{i}-(m-1)w_{R}^{i}$ &
$w_{L}^{i}-(m-2)w_{R}^{i}$
  \end{tabular}
\end{center}
In what follows, let $\tau \equiv (n+1)t_{1} \mod (t_1+t_2)$. The
contribution of $H^{0}(C_{1},f^{\ast}(\mathcal{O}(2))- 0$ is given
by
$$\prod_{k=1}^{d} -(\frac{k}{d} w_{R}^{i})^{2} \equiv
(-1)^{d}\left(\frac{d!}{d^d}\right)^{2} \tau^{2d}.$$ The
contribution of $H^{0}(C_{1},f^{\ast}(\mathcal{O}(1)))$ for $a
=1,\dots, m-1$ is
$$(-1)^{d+1}\tau^{d+1} \prod_{k=0}^{d} (a+\frac{k}{d})\mod(t_{1}+t_{2}).$$
The contribution of $H^{1}(C_{1},f^{\ast}(\mathcal{O}(-2)))$ is
given by
$$(t_{1}+t_{2})\cdot
(-1)^{d-1}\left(\frac{d!}{d^d}\right)^{2}\tau^{2d-2}\mod(t_{1}+t_{2})^{2}.$$
The contribution of $H^{1}(C_{1},f^{\ast}(\mathcal{O}(-1)))$ for
$a =1,\dots, m-1$ is
$$\tau^{d-1} \prod_{k=1}^{d-1} (a+\frac{k}{d})\mod(t_{1}+t_{2}).$$

The total contribution is then
$$\frac{(-1)^{md+m+d}}{d(m-1)!m!} \frac{1}{\tau^{2m}}\cdot (t_{1}+t_{2}) \mod
(t_{1}+t_{2})^{2}.$$

\item[$\bullet$]Contribution from $C_{k}, k > 1$:

The same calculation shows that the contribution here is given by
$$\frac{(-1)^{m}}{d\cdot((m-1)!)^{2}}\cdot (t_{1}+t_{2}) \mod (t_{1}+t_{2})^{2}.$$

\item[$\bullet$]Contribution from nodes:

At each node $q_{k}$ we have a contribution
$$ \frac{d}{t_{1}+t_{2}}$$
arising the normal direction to the fixed locus arising from
smoothing the node, and
$$ (-1)^{m} \tau^{2m}((m-1)!)^{2}\mod(t_{1}+t_{2})$$
from the gluing condition at the node.

\item[$\bullet$]Contribution from marked points:

The contribution from each marked points is the product of the
tangent weights at the fixed point:
$$(-1)^{m}\tau^{2m}(m!)^{2}, (-1)^{m}\tau^{2m}((m-1)!)^{2} \mod (t_{1}+t_{2}).$$

\end{enumerate}

The total contribution yields
\begin{align*}
\langle [J_{\vec{\theta}}]|\Theta_{i,j}|[J_{\vec{\sigma}}]\rangle
&=\sum_{d \geq 1}
\frac{1}{d}\frac{(m!)^2}{m}(-1)^{md+m+d}\tau^{2m} (q^{1-m}s_{i}\dots s_{j-1})^{d}\\
&= (-1)^{m-1}((n+1)t_{1})^{2m}\frac{(m!)^{2}}{m} \log (1 -
(-q)^{1-m}s_{i}\cdot\dots\cdot s_{j-1}).
\end{align*}

For the calculation of
$$\langle [J_{\vec{\rho}}]|\Theta_{i,j}|[J_{\vec{\kappa}}]\rangle,$$
we argue in the same way, omitting the details since they are
similar to the last calculation.
 As before, there is a unique sequence of $T$-orbits
connecting the two fixed points, that is decreasing with respect
to the partial ordering $\succeq$. Let $\rho^{(k)}$ be the
multipartitions defined by
\begin{gather*}
\rho^{(1)}_{i}= (m-1,1), \rho^{(1)}_{l} = \emptyset,\quad l \ne i \\
\rho^{(k)}_{i} = (m-1), \rho^{(k)}_{i+k-1} = (1), \rho^{(k)}_{l} =
\emptyset.
\end{gather*}
Then the sequence of marked points and nodes associated to any
fixed, unbroken map is
$$S = \{\vec{\rho}, \rho^{(1)}, \dots, \rho^{(j-i+1)} = \vec{\kappa}\}.$$
The argument for this is analogous to the argument given above,
now using the fact that the two largest values of the function
$\epsilon$ are achieved at the partitions
$$(m), (m-1,1).$$

Again, the $T$-orbits $L_{1}, \dots, L_{j-i+1}$ joining the fixed
points in $S$ are uniquely determined.  The orbit $L_{1}$ is
punctual with tangent weights $\pm ((m-1)w_{L}^{i} - w_{R}^{i})$
and has degree given by the monomial $q$.  The orbit $L_{2}$ has
degree given by the monomial $q^{-1}s_{i}$ while the remaining
$L_{k}$ are nonpunctual with tangent weights $\pm \tau \mod
(t_{1}+t_{2})$ and degree $s_{k}, i< k < j$. Given an unbroken
$T$-fixed map $f: C \rightarrow \Hb(\An)$, $C = C_{1}\cup\dots\cup
C_{r}$ as before, if the degree of $C_{2}$ over $L_{2}$ is $d$,
then the unbroken condition forces the degree of $C_{1}$ over
$L_{1}$ to be $md$ and the degree of $C_{k}$ over $L_{k}$ for $ k
\geq 2$ is $d$.  Therefore there is again a unique unbroken map
for every $d \geq 1$.

\begin{enumerate}

\item[$\bullet$] Contribution for $L_{1}$:

This punctual contribution is the main calculation of
\cite{okpanhilb}:
$$ (t_{1}+t_{2})\cdot (-1)^{md+m}\frac{(m!)^{2}}{md}\tau^{2m} \mod(t_{1}+t_{2})^{2}.$$

\item[$\bullet$] Contribution for the node $q_{1}$:

The contribution from smoothing the node at this point yields
$$\frac{md}{(m-1)(t_{1}+t_{2})}.$$

\item[$\bullet$] Contribution for $L_{2}$:

This calculation here involves an enumeration of the
$T$-equivariant splitting of the normal bundle, similar to the one
given above.
$$\frac{1}{d} (t_1+t_2)\cdot
\frac{(-1)^{m+d}}{((m-2)!)^{2}(m-1)m}\frac{1}{\tau^{2m}}
\mod(t_{1}+t_{2})^{2}.$$

\item[$\bullet$] Other curves, nodes, and marked points:
The remaining calculations are the same as those given above:
$$(-1)^{m}((m-1)!)^{2}\tau^{2m}.$$
\end{enumerate}

The total contribution yields
\begin{align*}
\langle [J_{\vec{\rho}}]|\Theta_{i,j}|[J_{\vec{\kappa}}]\rangle
&=\sum_{d \geq 1}
\frac{1}{d}\frac{(m!)^2}{m}(-1)^{md+m+d}\tau^{2m} (q^{m-1}s_{i}\dots s_{j-1})^{d}\\
&= (-1)^{m-1}((n+1)t_{1})^{2m}\frac{(m!)^{2}}{m} \log (1 -
(-q)^{m-1}s_{i}\cdot\dots\cdot s_{j-1}).
\end{align*}

\section{Operator calculations}\label{operatorcalculations}

In this section, we prove the results of the last section for the
operator $\Omega_{+}(q, s_{1},\dots,s_{n})$.  The results of last
section give an inductive algorithm (Propositions \ref{fixedlemma1}-\ref{fixedlemma3}) which uniquely determines the
operator $\Theta_{+}$ in terms of
\begin{enumerate}
\item the factorization statement of Proposition \ref{factorization},
\item the vanishing statement of Proposition \ref{vanishing}
\item the calculation of two-point correlators $\mod (t_1+t_2)^{2}$ in Proposition
\ref{fixedlemma3}.
\item the statement that the coefficients of $\langle \prod \mu_{i}(\omega_{i}|\Theta_{+}|\prod \nu_{i}(\omega_{i})\rangle$ are linear polynomials, proportional to $(t_1+t_2)$
\item and the vanishing vacuum expectation $\langle v_{\emptyset}|\Theta_{+}|v_{\emptyset}\rangle$.
\end{enumerate}
All the other manipulations of the last section involved moving
between the Nakajima and fixed-point bases.  If we prove each of
these statements for $(t_{1}+t_{2})\Omega_{+}$, then this inductive algorithm
forces
$$(t_{1}+t_{2})\Omega_{+}= \Theta_{+}.$$

It is clear from its definition that $\Omega_{+}$ has the same
root dependence and degree scaling properties as those of
$\Theta_{+}$ proved in Proposition \ref{degreescaling}.  It therefore suffices
to isolate the contribution of a fixed root $\alpha =
\alpha_{i,j}$ or, equivalently, to study the coefficient of
$s_{i}\cdot \dots \cdot s_{j-1}:$
$$\mathcal{E}_{\alpha}(q) = -\sum_{k\in \ZZ} :e_{j,i}(k)e_{i,j}(-k):(-q)^{k},$$
where we have dropped the factor of $t_{1}+t_{2}$ for convenience.
The vanishing vacuum expectation is immediate from this formula.

\subsection{Commutation relations}

We first write down the commutation relations for
$\mathcal{E}_{\alpha}(q)$ with the Nakajima operators
$\pp_{r}(\gamma)$ for $r \ne 0$.  Define operators
$$\mathcal{E}^{r}_{\alpha}(q) = -\sum_{k \in \ZZ}
e_{j,i}(k)e_{i,j}(r-k)(-q)^{k-\frac{r}{2}}.$$ Notice that, on any
fixed graded piece of Fock space, the summands in the above
expression vanish for $k$ sufficiently negative. For $r=0$, the
discrepancy between this operator and $\mathcal{E}_{\alpha}$
arises from the normal ordering.  This can be written as
$$\mathcal{E}^{0}_{\alpha}(q)= \mathcal{E}_{\alpha}(q) +
(e_{ii}(0)-e_{jj}(0))\frac{-q}{1+q} +\frac{q}{(1+q)^{2}}\cdot
c.$$ In particular, we have
$$\mathcal{E}^{0}_{\alpha}(q)v_{\emptyset} = \frac{q}{(1+q)^{2}} v_{\emptyset}.$$
It follows directly from the embedding of the Heisenberg algebra
into $\gotg$ and from the commutation relations that
\begin{gather*}
\left[ \mathfrak{p}_{r}(\gamma),
\mathcal{E}^{s}_{\alpha}(q)\right] = (\alpha, \gamma)((-q)^{-r/2}-(-q)^{r/2})\cdot
\mathcal{E}_\alpha^{r+s}(q),
\end{gather*}
where $(\alpha,\gamma)$ denotes the Poincare pairing on
$H^{\ast}_{T}(\An,\QQ)$.

This result makes calculating matrix elements with respect to the
Nakajima basis extremely simple. In particular, we can easily
deduce the factorization statement.
\begin{prop}\label{omegafact}
In terms of the Nakajima basis with respect to
$\{1,\omega_{1},\dots,\omega_{n}\}$, we have
$$\langle \mu(1) \prod\lambda_{i}(\omega_{i})|\mathcal{E}_{\alpha}(q)| \nu(1)
\prod\rho_{i}(\omega_{i})\rangle = \langle \mu(1)|\nu(1)\rangle \cdot
\langle \lambda_{i}(\omega_{i})|\mathcal{E}_{\alpha}(q)|
\rho_{i}(\omega_{i})\rangle.$$
\end{prop}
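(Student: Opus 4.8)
The plan is to read the factorization straight off the commutation relation
$[\pp_{r}(\gamma),\mathcal{E}^{s}_{\alpha}(q)]=(\alpha,\gamma)((-q)^{-r/2}-(-q)^{r/2})\mathcal{E}^{r+s}_{\alpha}(q)$
established above, specialized to $\gamma=1$. The first step is the observation that $(\alpha,1)=0$. Indeed $\alpha=\alpha_{i,j}=E_{i}+\dots+E_{j-1}$, and under the embedding $\mathcal{H}\hookrightarrow\gotg\otimes\QQ(t_1,t_2)$ each operator $\pp_{r}(1)$ with $r\ne 0$ is a nonzero scalar multiple of $\mathrm{Id}(r)$; since $\mathrm{Id}$ is central in $\mathfrak{gl}(n+1)$ and $\mathrm{tr}(\mathrm{Id}\cdot e_{ji})=\delta_{ij}=0$ (as $i<j$), the affine relations give $[\mathrm{Id}(r),e_{ji}(k)]=[\mathrm{Id}(r),e_{ij}(-k)]=0$. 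Hence $\mathrm{Id}(r)$ commutes with each normally ordered summand $:e_{ji}(k)e_{ij}(-k):$, so $[\pp_{r}(1),\mathcal{E}_{\alpha}(q)]=0$ for every $r\ne 0$.

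The second step is to pass to the tensor decomposition of Fock space attached to the basis $\{1,\omega_{1},\dots,\omega_{n}\}$. Write $\mathcal{F}_{\An}=\mathcal{F}^{(1)}\otimes\mathcal{F}^{(\omega)}$, where $\mathcal{F}^{(1)}$ is freely generated by the operators $\pp_{-r}(1)$ and $\mathcal{F}^{(\omega)}$ by the operators $\pp_{-r}(\omega_{i})$. This is a tensor product of Heisenberg modules because $(1,\omega_{i})=0$ for every $i$: this holds for the standard lift of $\omega_{i}$ as a $\QQ$-combination of the $E_{j}$ (and in any case may be arranged by subtracting a suitable multiple of $1$, which does not disturb $\langle\omega_{i},E_{j}\rangle=\delta_{i,j}$), and also follows from the same $\mathrm{Id}$-centrality computation. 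Thus operators labelled by $1$ act only on the first tensor factor and those labelled by $\omega_{i}$ only on the second. Since $\mathcal{F}^{(1)}$ is irreducible over the Heisenberg algebra generated by the $\pp_{r}(1)$ (the central charge $(1,1)=1/((n+1)t_1t_2)$ is nonzero, and one applies Schur's lemma on each finite-dimensional graded component, over $\QQ(t_1,t_2)$), and $\mathcal{E}_{\alpha}(q)$ commutes with all of these operators by the first step, $\mathcal{E}_{\alpha}(q)$ must act as $1\otimes\overline{\mathcal{E}}_{\alpha}(q)$ for a well-defined operator $\overline{\mathcal{E}}_{\alpha}(q)$ on $\mathcal{F}^{(\omega)}$.

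To conclude: under this decomposition the Nakajima vector $\mu(1)\prod\lambda_{i}(\omega_{i})$ is exactly $\mu(1)\otimes\prod\lambda_{i}(\omega_{i})$, the normalizations being compatible because an automorphism of a cohomology-weighted partition never permutes parts carrying distinct labels, so $\mathfrak{z}(\overrightarrow{\mu})=\mathfrak{z}(\mu)\cdot\prod_{i}\mathfrak{z}(\lambda_{i})$. Therefore
$$\langle\mu(1)\prod\lambda_{i}(\omega_{i})|\mathcal{E}_{\alpha}(q)|\nu(1)\prod\rho_{i}(\omega_{i})\rangle=\langle\mu(1)|\nu(1)\rangle\cdot\langle\prod\lambda_{i}(\omega_{i})|\overline{\mathcal{E}}_{\alpha}(q)|\prod\rho_{i}(\omega_{i})\rangle,$$
and the second factor on the right equals $\langle\prod\lambda_{i}(\omega_{i})|\mathcal{E}_{\alpha}(q)|\prod\rho_{i}(\omega_{i})\rangle$ because $\mathcal{E}_{\alpha}(q)=1\otimes\overline{\mathcal{E}}_{\alpha}(q)$ and $\langle v_{\emptyset}|v_{\emptyset}\rangle=1$ on $\mathcal{F}^{(1)}$. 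Since $\langle\mu(1)|\nu(1)\rangle$ is the Fock-space inner product, this is the asserted identity.

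I expect no substantial obstacle here; the content is entirely in the commutation relation of the previous step, and what remains is the bookkeeping of labels and normalizing constants together with the elementary input $(1,\omega_{i})=0$, which is the only point that needs a word of care. If one prefers to avoid the commutant argument, an equivalent route is to induct on $l(\nu)$: peel off one factor $\pp_{-r}(1)$ from the ket, commute it past $\mathcal{E}_{\alpha}(q)$ (legitimate by the first step) and then across the bra, where it either contracts against a matching part of $\mu$ (contributing the Heisenberg factor $-r(1,1)$) or annihilates $v_{\emptyset}$; summing over the contractions reassembles $\langle\mu(1)|\nu(1)\rangle$ and the induction closes, with the base case $l(\nu)=0$ handled by moving the remaining annihilators $\pp_{r}(1)$ from the bra onto $v_{\emptyset}$.
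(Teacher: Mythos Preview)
Your argument is correct and follows exactly the paper's approach: the proof there consists of the single observation that $[\mathcal{E}_{\alpha}(q),\pp_{r}(1)]=0$ because $(\alpha,1)=0$, read off from the commutation relation immediately preceding the proposition. You have simply spelled out in detail the bookkeeping (tensor decomposition, Schur/commutant argument, normalization constants) that the paper leaves implicit.
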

\begin{proof}
This follows immediately from the fact that
$$[\mathcal{E}_{\alpha}(q), \mathfrak{p}_{r}(1)] = 0$$
which in turn follows from the vanishing
$$(\alpha, 1) = 0.$$
\end{proof}

Furthermore, it is clear that matrix elements 
$\langle \prod \lambda_i(\omega_i)|\mathcal{E}_{\alpha}(q)|\prod \rho_{i}(\omega_{i})\rangle$
are nonequivariant constants, so that the coefficients of $(t_1+t_2)\Omega_{+}$ are linear.  All that remains are the vanishing statement and the exact evaluations.

\subsection{Diagonalization}\label{diagonalization}

We now analyze the matrix elements of
$\mathcal{E}^{0}_{\alpha}(q)$ with respect to the fixed-point
basis.  For our purposes, we only need to understand a certain
minor of this matrix $\mod (t_{1}+t_{2})$.  More precisely, since all operators and bases are defined over
$R = \QQ[t_1,t_2]_{(t_{1}+t_{2})}$, 
it makes sense to study the reduction of these operators
after tensoring with 
$$R/(t_{1}+t_{2}) = \QQ(\tau),$$
where $\tau \equiv (n+1)t_{1}\mod t_{1}+t_{2}$.

Recall the tensor product
decomposition
$$\mathcal{F}_{\An}\otimes R = \bigotimes_{k=1}^{n+1} \mathcal{F}_{\mathbb{C}^{2},k}\otimes R,$$
where the factors correspond to the $T$-fixed points $p_{1},\dots,
p_{n+1}$ of $\An$. Since
$$(\alpha, [p_{k}]) = 0$$
for $k \ne i,j$, we have
$$[\mathcal{E}^{0}_{\alpha}, \mathfrak{p}_{r}([p_{k}])] = 0$$
for $k \ne i,j$.  Therefore, this operator admits a decomposition
$$\mathcal{E}^{0}_{\alpha}(q) = \mathcal{E}^{\prime}_{\alpha}(q)\otimes \mathrm{Id}$$
where $\mathcal{E}^{\prime}_{\alpha}(q)$ is an operator on
$$\mathcal{F}_{i,j} \otimes R= (\mathcal{F}_{\mathbb{C}^{2},i}
\otimes\mathcal{F}_{\mathbb{C}^{2},j})\otimes R$$ and the identity matrix
acts on the remaining factors. From now on, we suppress the
$\mathbb{C}^{2}$ in our notation. There is a bi-grading on the above
space given by $$\mathcal{F}_{i,j} = \bigoplus_{a,b\geq 0}
\mathcal{F}^{(a)}_{i}\otimes \mathcal{F}^{(b)}_{j}.$$ We are
interested in the minors with respect to this decomposition.  That
is, if $\pi_{a,b}$ denote the orthogonal projections with respect
to this grading, then
$$\mathsf{M}_{\alpha}(q) = \bigoplus_{a,b\geq 0} \pi_{a,b}\circ
\mathcal{E}^{\prime}_{\alpha}(q)\circ \pi_{a,b}$$ is the operator
defined by restricting to the minors where the number of points
concentrated at each fixed point is held constant.  We want to
diagonalize the operator
$$\overline{\mathsf{M}_{\alpha}}(q)$$ on
$$\mathcal{F}_{i,j}\otimes R/(t_{1}+t_{2}) = \mathcal{F}_{i,j}\otimes \QQ(\tau)$$
obtained by extension of scalars.

The advantage of composing with the projectors $\pi_{a,b}$ is that
we have a further factorization of
$\overline{\mathsf{M}_{\alpha}}(q)$. It will again be convenient
to write everything in terms of symmetric function notation for
elements of $\mathcal{F}= \mathcal{F}_{\mathbb{C}^{2}}\otimes \QQ(\tau)$.


Let $\mathsf{A}(q)$ be the operator on symmetric functions defined
by
\begin{align*}
\mathsf{A}(q)\cdot \mathsf{p}_{\mu}(z) &= \sum_{\nu} \mathsf{p}_{\nu}(z)\cdot \\
&\left(\sum_{\rho \subset \mu\cap \nu}
\frac{1}{\mathfrak{z}(\nu\backslash\rho)} \cdot
\frac{(-q)^{1/2}}{(1+q)}\cdot
f_{\mu\backslash\rho}(q)f_{\nu\backslash\rho}(q)\right).
\end{align*}
In the above expression, $f_{\lambda}(q)$ is defined by
$$f_{\lambda}(q) = \prod_{i}\left((-q)^{\lambda_{i}/2} - (-q)^{-\lambda_{i}/2}\right).$$

If $\omega$ is the involution on symmetric functions defined by
$$\omega(\mathsf{p}_{\mu}(z)) = (-1)^{l(\mu)}\mathsf{p}_{\mu}(z),$$
then we also have the conjugate operator
$$\mathsf{B}(q) = \omega\circ \mathsf{A}(q) \circ \omega.$$
We then have the decomposition:
\begin{lem}
$$\overline{\mathsf{M}_{\alpha}}(q) = -\mathsf{B}(q)\otimes\mathsf{A}(q).$$
\end{lem}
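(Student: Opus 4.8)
The plan is to compute the matrix entries of $\overline{\mathsf{M}_\alpha}(q)$ in the power-sum (fixed-point) basis of $\mathcal{F}_{i,j}$ and match them block-by-block in the bigrading against those of $-\mathsf{B}(q)\otimes\mathsf{A}(q)$; since all operators and bases in sight are defined over $R=\QQ[t_1,t_2]_{(t_1+t_2)}$, it is enough to do this modulo $(t_1+t_2)$. The first point is to recognize $\mathsf{A}(q)$ for what it is: $\mathsf{A}(q)=\tfrac{(-q)^{1/2}}{1+q}\exp\!\big(\sum_{k\ge 1}\tfrac{g_k}{k}\mathsf{p}_k\big)\exp\!\big(\sum_{k\ge 1}\tfrac{g_k}{k}\mathsf{p}_k^{\perp}\big)$ with $g_k=(-q)^{k/2}-(-q)^{-k/2}$ and $\mathsf{p}_k^{\perp}$ the adjoint of multiplication by $\mathsf{p}_k$ — expanding the two exponentials against $\mathsf{p}_\mu$ and resumming reproduces the stated formula, with $\tfrac{1}{\mathfrak{z}(\nu\setminus\rho)}$ from the creation exponential, $f_{\mu\setminus\rho}(q)f_{\nu\setminus\rho}(q)$ from the uncontracted parts, and $\tfrac{(-q)^{1/2}}{1+q}$ the overall prefactor; $\mathsf{B}(q)=\omega\mathsf{A}(q)\omega$ is obtained by flipping the sign of each $\mathsf{p}_k,\mathsf{p}_k^{\perp}$. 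So the content of the lemma is that, modulo $(t_1+t_2)$, the $\mathcal{F}_j$-factor of $\mathcal{E}^0_\alpha(q)$ is exactly this displacement operator while the $\mathcal{F}_i$-factor is its $\omega$-conjugate, with an extra global sign.

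To prove this I would exploit three facts about the family $\mathcal{E}^r_\alpha(q)$. First, a Fock-degree count on the summands of $\mathcal{E}^r_\alpha$, using the highest-weight conditions $\rho(\mathfrak{gl}(n+1)\otimes\QQ[t])v_\emptyset=0$, gives $\mathcal{E}^r_\alpha v_\emptyset=0$ for $r>0$, and since $(\mathcal{E}^r_\alpha)^{\dagger}=\mathcal{E}^{-r}_\alpha$ also $\langle v_\emptyset|\mathcal{E}^r_\alpha=0$ for $r<0$; second, $\mathcal{E}^0_\alpha v_\emptyset=\tfrac{q}{(1+q)^2}v_\emptyset$ (and its adjoint); third, the commutation relation $[\mathfrak{p}_r(\gamma),\mathcal{E}^s_\alpha(q)]=(\alpha,\gamma)\big((-q)^{-r/2}-(-q)^{r/2}\big)\mathcal{E}^{r+s}_\alpha(q)$. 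Given a matrix entry $\langle \mathsf{p}_{\mu_1}(z^{(i)})\mathsf{p}_{\mu_2}(z^{(j)})\,|\,\mathcal{E}^0_\alpha\,|\,\mathsf{p}_{\nu_1}(z^{(i)})\mathsf{p}_{\nu_2}(z^{(j)})\rangle$, I would rewrite the insertions through the fixed-point Heisenberg operators $\mathfrak{p}_{-r}([p_i]),\mathfrak{p}_{-r}([p_j])$ and move $\mathcal{E}^0_\alpha$ toward the vacua using the third fact: each commutator either strips a part, shifting the superscript of $\mathcal{E}$ and producing a scalar $(\alpha,[p_{\bullet}])\big((-q)^{\pm r/2}-(-q)^{\mp r/2}\big)$, or lets a part pass through to be paired on the far side. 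By the first two facts only fully-stripped terms survive, and one is left with a sum over the pairings of the stripped parts of the bra with those of the ket, weighted by products of these elementary scalars times a single $\tfrac{q}{(1+q)^2}$; the combinatorics of the pairings, once the $\mathfrak{p}_{-r}([p_{\bullet}])$ are converted back to multiplication by $\mathsf{p}_r(z)$, is exactly the sum over $\rho\subset\mu\cap\nu$ with the $\tfrac{1}{\mathfrak{z}}$-normalizations.

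Finally I would feed in the localization computation of the Poincar\'e pairings: $(\alpha_{ij},[p_k])=(E_i+\dots+E_{j-1})|_{p_k}/(w_L^k w_R^k)$ equals $-(t_1+t_2)/(w_L^k w_R^k)\equiv 0$ for $i<k<j$ and vanishes outside $[i,j]$, whereas for $k=i$ and $k=j$ it equals $\pm 1/w_L^i$ and $\pm 1/w_R^j$, which modulo $(t_1+t_2)$ (where $w_L^{\bullet}\equiv\tau$, $w_R^{\bullet}\equiv-\tau$, $w_L^{\bullet}+w_R^{\bullet}=t_1+t_2$) become $\mp 1/\tau$ with \emph{opposite} signs at $i$ and at $j$. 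This both reproves the decoupling $\mathcal{E}^0_\alpha\equiv\mathcal{E}'_\alpha\otimes\mathrm{Id}$ and, after clearing the $w_R^k\equiv-\tau$ coming from the normalization $\mathfrak{p}_{-r}([p_k])\leftrightarrow w_R^k\mathsf{p}_r(z)$, makes all powers of $\tau$ cancel, leaving precisely the weights $g_r$ of $\mathsf{A}(q)$; the opposite sign at $[p_i]$ is exactly what turns its factor into $\omega\mathsf{A}(q)\omega=\mathsf{B}(q)$, the prefactor $-1$ is the one in $\mathcal{E}_\alpha(q)=-\sum:e_{ji}(k)e_{ij}(-k):(-q)^k$, and the split of $\tfrac{q}{(1+q)^2}$ as $-\tfrac{(-q)^{1/2}}{1+q}\cdot\tfrac{(-q)^{1/2}}{1+q}$ between the two factors is forced by, and consistent with, $\mathcal{E}^0_\alpha v_\emptyset=\tfrac{q}{(1+q)^2}v_\emptyset$. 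I expect the main obstacle to be exactly this constant-chasing: fixing the signs of the equivariant restrictions $E_l|_{p_k}$, pinning the half-integer powers $(-q)^{1/2}$ and the identification of the ``ratio'' parameter with $(-q)$, and reconciling the unrestricted sum over $\nu$ in the displayed formula for $\mathsf{A}(q)$ with the fact that only the degree-preserving blocks enter $\mathsf{M}_\alpha(q)$ — the vacuum identity being the anchoring consistency check. One may instead organize the entire argument through the homogeneous Frenkel--Kac realization of $V_\Lambda$, in which $e_{ij}(k),e_{ji}(k)$ literally become Fourier modes of vertex operators acting on $\mathcal{F}_i$ with charge $+1$ and on $\mathcal{F}_j$ with charge $-1$ modulo $(t_1+t_2)$; then the three facts above are subsumed into one application of Wick's theorem and the combinatorial identity follows by inspection.
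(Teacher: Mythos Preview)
Your approach is essentially the paper's: the paper's proof is a single sentence saying the identity follows from the explicit formula for $\mathsf{A}(q)$, $\mathsf{B}(q)$ and the commutation relations of $\overline{\mathsf{M}_\alpha}(q)$ with the Nakajima operators, and you are writing out exactly that computation --- strip the power-sum insertions using $[\mathfrak{p}_r(\gamma),\mathcal{E}^s_\alpha]=(\alpha,\gamma)g_r\,\mathcal{E}^{r+s}_\alpha$, feed in the vacuum value, and identify the resulting combinatorics with the displayed formula for $\mathsf{A}(q)$ (and its $\omega$-conjugate). The exponential/vertex-operator repackaging and the Frenkel--Kac remark are pleasant reformulations but not a different method.

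One computational slip to fix: your localization formula $(\alpha_{ij},[p_k])=(E_i+\cdots+E_{j-1})|_{p_k}/(w_L^k w_R^k)$ is off by the Euler class. With $[p_k]$ the equivariant point class one has $\langle a,[p_k]\rangle=a|_{p_k}$, so in fact $(\alpha_{ij},[p_k])=t_1+t_2$ for $i<k<j$, $(\alpha_{ij},[p_i])=w_L^i\equiv\tau$, and $(\alpha_{ij},[p_j])=w_R^j\equiv-\tau$ modulo $t_1+t_2$. This does not change your conclusions (the intermediate pairings still vanish modulo $t_1+t_2$ and the endpoint pairings still carry opposite signs, producing the $\omega$-conjugation), but it does change the $\tau$-bookkeeping in the step where you cancel against the normalization $\mathfrak{p}_{-r}([p_k])\leftrightarrow w_R^k\,\mathsf{p}_r$; redo that cancellation with the corrected values and the powers of $\tau$ still drop out.
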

\begin{proof}
This follows from the formulas for $\mathsf{A}(q)$ and
$\mathsf{B}(q)$ and the commutation relations for
$\overline{\mathsf{M}_{\alpha}}(q)$ with respect to the Nakajima
operators.
\end{proof}

The following lemma is proven in \cite{okpanp1}.
 In what follows, given a partition $\lambda$, we define the function
$$e(\lambda,q) = \sum_{i} (-q)^{\lambda_{i}-i+\frac{1}{2}},$$
which is a rational function in $(-q)^{1/2}$.
\begin{lem}
The operators $\mathsf{A}(q)$ and $\mathsf{B}(q)$ are
diagonalizable with eigenvectors given by Schur polynomials
$\mathsf{s}_{\lambda}(z)$ and eigenvalues $e(\lambda,q)$ and
$e(\lambda^{\prime},q)$ respectively.
\end{lem}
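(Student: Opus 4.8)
The plan is to diagonalize $\mathsf{A}(q)$ by transporting it, through the boson-fermion correspondence, to a diagonal fermion bilinear on the charge-zero infinite wedge; the statement for $\mathsf{B}(q)=\omega\circ\mathsf{A}(q)\circ\omega$ then follows formally, since $\omega$ interchanges $\mathsf{s}_\lambda(z)$ and $\mathsf{s}_{\lambda'}(z)$ up to a sign that cancels.

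The first step is to recognize $\mathsf{A}(q)$ as a normalized product of half-vertex operators. Put $f_k(q)=(-q)^{k/2}-(-q)^{-k/2}$, let $\Gamma_-(q)$ be multiplication by $\exp\!\big(\sum_{k\ge1}\tfrac{f_k(q)}{k}\mathsf{p}_k(z)\big)=\sum_\nu\tfrac{f_\nu(q)}{\mathfrak{z}(\nu)}\mathsf{p}_\nu(z)$, and let $\Gamma_+(q)=\exp\!\big(\sum_{k\ge1}\tfrac{f_k(q)}{k}\mathsf{p}_k^{\perp}\big)$, where $\mathsf{p}_k^{\perp}$ is the Hall-pairing adjoint of multiplication by $\mathsf{p}_k(z)$. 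Using $\Gamma_+(q)\,\mathsf{p}_\ell(z)\,\Gamma_+(q)^{-1}=\mathsf{p}_\ell(z)+f_\ell(q)$, a short computation shows that the coefficient of $\mathsf{p}_\nu(z)$ in $\Gamma_-(q)\Gamma_+(q)\,\mathsf{p}_\mu(z)$ is exactly $\sum_{\rho\subseteq\mu\cap\nu}\tfrac{1}{\mathfrak{z}(\nu\setminus\rho)}f_{\mu\setminus\rho}(q)f_{\nu\setminus\rho}(q)$, so that
$$\mathsf{A}(q)=\frac{(-q)^{1/2}}{1+q}\,\Gamma_-(q)\,\Gamma_+(q).$$

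Next I would pass to the fermionic picture. Under the boson-fermion correspondence the ring of symmetric functions in $z$ is identified with the charge-zero infinite wedge so that $\mathsf{s}_\lambda(z)$ maps to the vector $\underline\lambda$ whose Maya diagram is occupied precisely at the half-integral sites $\{\lambda_i-i+\tfrac12\}_{i\ge1}$, and so that multiplication by $\mathsf{p}_k(z)$ and the operator $\mathsf{p}_k^{\perp}$ become the Heisenberg modes $\alpha_{-k}$ and $\alpha_k$. Thus $\Gamma_\pm(q)$ become the standard fermionic half-vertex operators. I would then normal-order the product $\Gamma_-(q)\Gamma_+(q)$ by means of $\Gamma_+(x)\Gamma_-(y)=(1-xy)^{-1}\Gamma_-(y)\Gamma_+(x)$: splitting $\Gamma_\pm(q)$ into their pieces with parameters $(-q)^{1/2}$ and $(-q)^{-1/2}$ and reordering, the scalar prefactor absorbs the singular commutation factor, and what remains is a diagonal operator of the shape $\sum_{k\in\ZZ+1/2}(-q)^{k}\,{:}\psi_k\psi^{*}_k{:}$ plus an additive scalar. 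Such an operator acts on $\underline\lambda$ by $\sum_k(-q)^{k}$ over its occupied sites, regularized against the vacuum, which by the identity $\big((-q)^{1/2}-(-q)^{-1/2}\big)^{-1}=\sum_{i\ge1}(-q)^{-i+1/2}$ equals $\sum_{i\ge1}(-q)^{\lambda_i-i+1/2}=e(\lambda,q)$. Transporting back yields $\mathsf{A}(q)\,\mathsf{s}_\lambda(z)=e(\lambda,q)\,\mathsf{s}_\lambda(z)$, and hence $\mathsf{B}(q)\,\mathsf{s}_\lambda(z)=\omega\,\mathsf{A}(q)\,\omega\,\mathsf{s}_\lambda(z)=e(\lambda',q)\,\mathsf{s}_\lambda(z)$, which is the claim; this is the diagonalization carried out in \cite{okpanp1}.

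The step I expect to be the main obstacle is the reordering just described: the product $\Gamma_+\!\big((-q)^{1/2}\big)\Gamma_-\!\big((-q)^{-1/2}\big)$ is formally singular, its commutation factor being $(1-1)^{-1}$, and it is exactly this singular locus that is responsible for the additive scalar $\big((-q)^{1/2}-(-q)^{-1/2}\big)^{-1}$ in the diagonal operator; making this rigorous requires a careful regularization --- for instance, keeping the two vertex operators at independent parameters and taking a controlled limit, or extracting the relevant residue --- while tracking the prefactor $(-q)^{1/2}/(1+q)$ so that the scalar comes out with the right value. Everything else reduces to routine vertex-operator bookkeeping. A fermion-free alternative is to expand $\Gamma_-(q)$ and $\Gamma_+(q)$ through the Pieri and dual Pieri rules for $\prod_i\!\big(1-(-q)^{1/2}x_i\big)^{-1}\prod_i\!\big(1-(-q)^{-1/2}x_i\big)$ acting on Schur functions, to write $\langle\mathsf{s}_\mu(z),\mathsf{A}(q)\,\mathsf{s}_\lambda(z)\rangle$ as a signed sum over pairs of horizontal and vertical strips, and to check by a telescoping (equivalently Jacobi--Trudi) identity that this collapses to $\delta_{\mu\lambda}\,e(\lambda,q)$; here the vanishing of the off-diagonal entries carries the whole content and is simply a reformulation of the fermionic computation.
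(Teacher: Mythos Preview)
Your argument is correct and rests on the same underlying mechanism as the paper's, namely the identification of $\mathsf{A}(q)$ with the operator $\mathcal{E}_0$ of \cite{okpanp1} on the charge-zero infinite wedge. The paper, however, packages the proof differently: rather than expressing $\mathsf{A}(q)$ as a product of half-vertex operators and reordering, it simply observes that $\mathsf{A}(q)$ and $\mathcal{E}^0_{\mathbf{P}^1}$ satisfy the same commutation relations with the Heisenberg generators and the same vacuum expectation, hence coincide, and then quotes the diagonalization of $\mathcal{E}^0_{\mathbf{P}^1}$ from \cite{okpanp1}. Your route has the virtue of being self-contained and making the Schur eigenbasis visible through the Maya-diagram picture, at the cost of confronting the singular reordering $\Gamma_+((-q)^{1/2})\Gamma_-((-q)^{-1/2})$ directly; the paper's route sidesteps that regularization entirely by outsourcing it to the cited reference, where $\mathcal{E}_0$ is \emph{defined} on the fermion side as the (already regularized) diagonal bilinear, so no limit is needed. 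Either way the $\mathsf{B}(q)$ statement follows from $\omega(\mathsf{s}_\lambda)=\mathsf{s}_{\lambda'}$ exactly as you say.
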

\begin{proof}
In \cite{okpanp1}, the Gromov-Witten theory of $\mathbf{P}^1$ is
given in terms of an operator
$\mathcal{E}^{0}_{\mathbf{P}^{1}}(z)$ on the space of symmetric
functions (identified with standard Fock space). Its eigenvectors
are given by Schur functions $\mathsf{s}_{\lambda}(z)$ with eigenvalues
$e({\lambda},-e^{z})$. Moreover, a direct comparison of the
commutation relations with the Nakajima operators shows that
$$\mathsf{A}(e^{z}) = \mathcal{E}^{0}_{\mathbf{P}^{1}}(z).$$
The statement for $\mathsf{B}(q)$ follows from the fact that
$$\omega(s_{\lambda}) = s_{\lambda^{\prime}}.$$
\end{proof}

The following proposition, including a vanishing statement, is an immediate corollary of the above
discussion.

\begin{prop}\label{omegavan}
The matrix $\overline{\mathsf{M}_{\alpha}}(q)$ is diagonal with
respect to the basis
$$\mathsf{J}_{\lambda}(z)\otimes \mathsf{J}_{\rho}(z)$$
with eigenvalues
$$-e_{\lambda^{\prime}}(q)\cdot e_{\rho}(q).$$
Furthermore, given two distinct multipartitions
$\vec{\lambda} \ne \vec{\eta}$ such that either $|\lambda_{i}| =
|\eta_{i}|$ or $|\lambda_{j}| = |\eta_{j}|$ then
$$
\langle  [J_{\vec{\lambda}}]|\mathcal{E}_{\alpha}(q)|
[J_{\vec{\eta}}]\rangle = 0 \mod (t_1+t_2).
$$
\end{prop}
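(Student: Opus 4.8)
The plan is to obtain both assertions as purely formal consequences of the two lemmas just established — the decomposition $\overline{\mathsf{M}_{\alpha}}(q) = -\mathsf{B}(q)\otimes\mathsf{A}(q)$ and the diagonalization of $\mathsf{A}(q),\mathsf{B}(q)$ in the Schur basis — together with the dictionary of Section~\ref{bases} that identifies the reduced fixed-point classes with nonzero scalar multiples of the corresponding Jack polynomials. For the first assertion I would simply compute: by the preceding lemma, on $\mathsf{s}_{\lambda}(z)\otimes\mathsf{s}_{\rho}(z)$ the operator $-\mathsf{B}(q)\otimes\mathsf{A}(q)$ acts by the scalar $-e(\lambda^{\prime},q)\,e(\rho,q)$. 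Since modulo $(t_{1}+t_{2})$ each integral Jack polynomial $\mathsf{J}_{\lambda}(z)$ is a nonzero scalar multiple of $\mathsf{s}_{\lambda}(z)$, and rescaling eigenvectors does not change eigenvalues, $\overline{\mathsf{M}_{\alpha}}(q)$ is diagonal with respect to $\{\mathsf{J}_{\lambda}(z)\otimes\mathsf{J}_{\rho}(z)\}$ with the stated eigenvalues.

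For the vanishing statement, I would fix $m$ and analyze the matrix element $\langle [J_{\vec{\lambda}}]|\mathcal{E}_{\alpha}(q)|[J_{\vec{\eta}}]\rangle$ for multipartitions $\vec{\lambda}\ne\vec{\eta}$ of $m$. The first move is to replace $\mathcal{E}_{\alpha}(q)$ by $\mathcal{E}^{0}_{\alpha}(q)$: these differ only by the operator $(e_{ii}(0)-e_{jj}(0))\tfrac{-q}{1+q}+\tfrac{q}{(1+q)^{2}}c$, which acts by a scalar on the single weight space $V_{\Lambda}[\Lambda-m\delta]=H^{\ast}_{T}(\Hb_{m}(\An))$, so $\mathcal{E}_{\alpha}(q)$ and $\mathcal{E}^{0}_{\alpha}(q)$ have identical off-diagonal matrix entries in every basis; in particular it suffices to treat $\mathcal{E}^{0}_{\alpha}(q)$. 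Next, using $\mathcal{E}^{0}_{\alpha}(q)=\mathcal{E}^{\prime}_{\alpha}(q)\otimes\mathrm{Id}$ under the tensor decomposition of Section~\ref{diagonalization} and $[J_{\vec{\lambda}}]=\bigotimes_{k}[J_{\lambda_{k}}]$, the matrix element is exactly zero unless $\lambda_{k}=\eta_{k}$ for all $k\ne i,j$ (orthogonality of fixed-point classes on the remaining factors), in which case it factors as $\langle [J_{\lambda_{i}}]\otimes[J_{\lambda_{j}}]|\mathcal{E}^{\prime}_{\alpha}(q)|[J_{\eta_{i}}]\otimes[J_{\eta_{j}}]\rangle\cdot\prod_{k\ne i,j}\langle [J_{\lambda_{k}}]|[J_{\lambda_{k}}]\rangle$.

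Finally I would close the case where $\vec{\lambda}$ and $\vec{\eta}$ agree away from $\{i,j\}$. Since both are multipartitions of the same total size $m$, the hypothesis ``$|\lambda_{i}|=|\eta_{i}|$ or $|\lambda_{j}|=|\eta_{j}|$'' forces \emph{both} equalities, so the two tensors $[J_{\lambda_{i}}]\otimes[J_{\lambda_{j}}]$ and $[J_{\eta_{i}}]\otimes[J_{\eta_{j}}]$ lie in the same bigraded piece $\mathcal{F}^{(|\lambda_{i}|)}_{i}\otimes\mathcal{F}^{(|\lambda_{j}|)}_{j}$. On this piece the projectors $\pi_{a,b}$ act as the identity, so modulo $(t_{1}+t_{2})$ the remaining factor equals (up to a nonzero scalar coming from the Jack-versus-fixed-point normalization) the matrix element of $\overline{\mathsf{M}_{\alpha}}(q)$ between the distinct Jack-basis elements $\mathsf{J}_{\lambda_{i}}\otimes\mathsf{J}_{\lambda_{j}}$ and $\mathsf{J}_{\eta_{i}}\otimes\mathsf{J}_{\eta_{j}}$ (distinct because $(\lambda_{i},\lambda_{j})\ne(\eta_{i},\eta_{j})$). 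By the first part this vanishes, which gives $\langle [J_{\vec{\lambda}}]|\mathcal{E}_{\alpha}(q)|[J_{\vec{\eta}}]\rangle\equiv 0 \bmod (t_{1}+t_{2})$.

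Everything here is bookkeeping once the two lemmas are in hand; the substantive input — diagonalizing $\mathsf{A}(q)$ and $\mathsf{B}(q)$ via the comparison with the equivariant Gromov--Witten theory of $\mathbf{P}^{1}$ — has already been carried out, so I do not expect a genuine obstacle. The only points that require a little care are the scalar correction relating $\mathcal{E}_{\alpha}(q)$ to $\mathcal{E}^{0}_{\alpha}(q)$ (so that one works with the operator whose minors were diagonalized) and the size-counting step that upgrades the ``or'' in the hypothesis to ``and.''
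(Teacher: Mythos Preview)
Your proposal is correct and matches the paper's approach: the paper simply states that the proposition ``is an immediate corollary of the above discussion,'' and you have accurately filled in the bookkeeping---the passage from $\mathcal{E}_{\alpha}$ to $\mathcal{E}^{0}_{\alpha}$ via the scalar correction, the tensor factorization through $\mathcal{E}'_{\alpha}\otimes\mathrm{Id}$, the size-counting step upgrading ``or'' to ``and,'' and the identification with the diagonalized $\overline{\mathsf{M}_{\alpha}}$ on the relevant bigraded piece. There is nothing to add.
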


\subsection{Two-point correlators}

We now prove a version of Proposition \ref{fixedlemma3} for
$\Omega_{+}(q)$.  Let
Let $\Omega_{[i,j]}(q,s_{i},\dots,s_{j-1})$ denote the contribution
to $\Omega_{+}$ arising from monomials in $s_{i},\dots, s_{j-1}$ (or, equivalently,
curve classes supported in $[i,j]$).
Recall the multipartitions
$\vec{\rho}, \vec{\kappa}, \vec{\theta},\vec{\sigma}$ from the statement 
of proposition \ref{fixedlemma2}.

\begin{prop}\label{omegaexact}
We have the following evaluations for two-point correlators modulo $(t_1+t_2)$:
\begin{gather*}
\langle
[J_{\vec{\rho}}]|\Omega_{[i,j]}|[J_{\vec{\kappa}}]\rangle=(-1)^{m-1}
((n+1) t_1)^{2m}\frac{(m!)^2}{m}
\log(1-(-q)^{m-1}s_{ij}),\\
\langle
 [J_{\vec{\theta}}]|\Omega_{[i,j]}|[J_{\vec{\sigma}}]\rangle=
(-1)^{m-1}((n+1) t_1)^{2m}\frac{(m!)^2}{m}
\log(1-(-q)^{-m+1}s_{ij}).
\end{gather*}
\end{prop}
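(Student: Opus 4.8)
By the root-dependence and degree-scaling of $\Omega_{+}$, which (exactly as in Proposition~\ref{degreescaling} for $\Theta$) are immediate from the defining formula, it suffices to isolate the coefficient of the primitive monomial $s_{i}\cdots s_{j-1}$ and then resum over its multiples. Since the coefficient of $(s_{i}\cdots s_{j-1})^{d}$ in $\Omega_{[i,j]}$ is obtained from the $d=1$ coefficient $\mathcal{E}_{\alpha}(q)$ (with $\alpha=\alpha_{i,j}$) by the substitution $(-q)\mapsto(-q)^{d}$ and an overall factor $1/d$, the two asserted logarithms are equivalent to the congruences, modulo $(t_{1}+t_{2})$,
\begin{gather*}
\langle [J_{\vec{\rho}}]\,|\,\mathcal{E}_{\alpha}(q)\,|\,[J_{\vec{\kappa}}]\rangle\equiv(-1)^{m}((n+1)t_{1})^{2m}\tfrac{(m!)^{2}}{m}(-q)^{m-1},\\
\langle [J_{\vec{\theta}}]\,|\,\mathcal{E}_{\alpha}(q)\,|\,[J_{\vec{\sigma}}]\rangle\equiv(-1)^{m}((n+1)t_{1})^{2m}\tfrac{(m!)^{2}}{m}(-q)^{-m+1};
\end{gather*}
resumming $-\sum_{d\ge 1}\tfrac{1}{d}(-q)^{d(m-1)}s_{ij}^{d}=\log(1-(-q)^{m-1}s_{ij})$ then produces the statement.

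\textbf{Reduction to two Fock factors.} Since $(\alpha,[p_{k}])=0$ for $k\neq i,j$, the operator $\mathcal{E}_{\alpha}(q)$ commutes with every $\pp_{r}([p_{k}])$, $k\neq i,j$, and all four fixed-point classes equal the vacuum outside the $i$-th and $j$-th Fock factors; hence the computation lives on $\mathcal{F}_{i,j}=\mathcal{F}_{\mathbb{C}^{2},i}\otimes\mathcal{F}_{\mathbb{C}^{2},j}$ and only involves the operator $\mathcal{E}^{\prime}_{\alpha}(q)$ of Section~\ref{diagonalization}. On each graded weight space the Cartan element $e_{i,i}(0)-e_{j,j}(0)$ acts by zero, so $\mathcal{E}_{\alpha}(q)=\mathcal{E}^{0}_{\alpha}(q)-\tfrac{q}{(1+q)^{2}}$; since the correlators in question pair distinct, hence orthogonal, fixed points, the scalar term drops out and it is enough to evaluate $\langle\,\cdot\,|\,\mathcal{E}^{0}_{\alpha}(q)\,|\,\cdot\,\rangle$. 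Reducing modulo $(t_{1}+t_{2})$ and passing to symmetric functions as in Section~\ref{bases} (with $\pp_{-r}([p_{k}])\leftrightarrow w_{R}^{k}\mathsf{p}_{r}$ and $w_{R}^{i}\equiv w_{R}^{j}\equiv-(n+1)t_{1}$), the classes $[J_{\vec{\rho}}],[J_{\vec{\kappa}}]$ become, up to explicit powers of $(n+1)t_{1}$ and factorials from $\mathsf{J}_{\lambda}\equiv(-1)^{|\lambda|}\tfrac{|\lambda|!}{\dim\lambda}\mathsf{s}_{\lambda}$, the symmetric functions $\mathsf{s}_{(m)}\otimes 1$ and $\mathsf{s}_{(m-1)}\otimes\mathsf{s}_{(1)}$, and similarly $[J_{\vec{\theta}}],[J_{\vec{\sigma}}]$ become $\mathsf{s}_{(1^{m})}\otimes 1$ and $\mathsf{s}_{(1^{m-1})}\otimes\mathsf{s}_{(1)}$.

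\textbf{Evaluation of the matrix elements.} To compute $\langle\,\cdot\,|\,\mathcal{E}^{0}_{\alpha}(q)\,|\,\cdot\,\rangle$ I write the source classes through their generating functions, $\sum_{m}h_{m}t^{m}=\exp(\sum_{r>0}\tfrac{t^{r}}{r}\mathsf{p}_{r})$ and the analogous expression with $e_{m}$, commute $\mathcal{E}^{0}_{\alpha}(q)$ to the right using
$$[\pp_{r}(\gamma),\mathcal{E}^{s}_{\alpha}(q)]=(\alpha,\gamma)\bigl((-q)^{-r/2}-(-q)^{r/2}\bigr)\mathcal{E}^{r+s}_{\alpha}(q),$$
and kill the vacuum with $\mathcal{E}^{0}_{\alpha}(q)v_{\emptyset}=\tfrac{q}{(1+q)^{2}}v_{\emptyset}$. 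This is exactly the manipulation that identifies $\mathsf{A}(q)$ with the Okounkov--Pandharipande $\mathbf{P}^{1}$ operator $\mathcal{E}^{0}_{\mathbf{P}^{1}}$, so one may alternatively quote the corresponding $\mathbf{P}^{1}$ matrix-element computation of \cite{okpanp1}. Because the shapes involved are single rows or single columns, at most one term survives each branching and the resulting sum telescopes.

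\textbf{The main difficulty.} The crux is the last step: verifying that the $q$-series generated by the commutation relations collapses to the single monomial $(-q)^{\pm(m-1)}$ rather than a genuine Laurent series in $q$ --- this is the operator-theoretic shadow of the ``for each degree $d$ there is a unique unbroken $T$-fixed chain'' phenomenon in the proof of Proposition~\ref{fixedlemma3} --- and that its coefficient is exactly $(-1)^{m}((n+1)t_{1})^{2m}(m!)^{2}/m$. The rest is bookkeeping: the powers of $(n+1)t_{1}$ from the $w_{R}^{k}$ and from $\pp_{-r}([p_{k}])\leftrightarrow w_{R}^{k}\mathsf{p}_{r}$, and the factorials from the normalization $\mathsf{J}_{\lambda}\equiv(-1)^{|\lambda|}\tfrac{|\lambda|!}{\dim\lambda}\mathsf{s}_{\lambda}$ relating the fixed-point basis to the Schur basis modulo $(t_{1}+t_{2})$.
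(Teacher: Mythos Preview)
Your strategy and reductions are correct and match the paper's: the root/degree scaling, the localization to the two Fock factors $\mathcal{F}_{i}\otimes\mathcal{F}_{j}$, the passage to Schur functions modulo $(t_{1}+t_{2})$, and the identification of the relevant operator with the $\mathbf{P}^{1}$ operator $\mathcal{E}^{0}_{\mathbf{P}^{1}}$ are exactly what the paper does. Your observation that the Cartan correction $e_{i,i}(0)-e_{j,j}(0)$ vanishes on the weight space and the scalar $\tfrac{q}{(1+q)^{2}}$ drops by orthogonality is also fine.

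The one place you diverge is the actual evaluation. You propose to expand $h_{m}$ (resp.\ $e_{m}$) via its generating exponential, commute $\mathcal{E}^{0}_{\alpha}$ through the resulting infinite product of $\pp_{-r}$'s, and argue that the series telescopes to a single monomial. This works, but it is the laborious route and the ``telescoping'' is not as automatic as you suggest. The paper instead uses the structure already built in Section~\ref{diagonalization}: since $\overline{\mathsf{M}_{\alpha}}(q)$ is \emph{diagonal} in the Schur basis with known eigenvalues $-e(\lambda^{\prime},q)e(\rho,q)$, one only needs to move the single point in the $j$-th factor back to the $i$-th factor. Writing $\mathsf{s}_{(m-1)}\otimes\mathsf{s}_{(1)}=\pp_{-1}(1)\bigl(\mathsf{s}_{(m-1)}\otimes 1\bigr)-(\mathsf{s}_{(1)}\cdot\mathsf{s}_{(m-1)})\otimes 1$, using $[\mathcal{E}_{\alpha},\pp_{-1}(1)]=0$ and the adjoint of $\pp_{-1}(1)$, and the branching $\mathsf{s}_{(1)}\mathsf{s}_{(m-1)}=\mathsf{s}_{(m)}+\mathsf{s}_{(m-1,1)}$, the off-diagonal terms die by Proposition~\ref{omegavan} and the answer is the eigenvalue difference
\[
-e(\emptyset,q)\bigl(e((m-1),q)-e((m),q)\bigr)=(-1)^{m}(-q)^{m-1}.
\]
This replaces your ``main difficulty'' by a one-line computation: the collapse to a single monomial is precisely the statement that $e((m),q)-e((m-1),q)=(-q)^{m-1/2}$, which is immediate from the definition of $e(\lambda,q)$. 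Your generating-function approach would eventually rediscover this identity, but the paper's use of the eigenbasis makes it transparent.
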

\begin{proof}

This immediately reduces to a claim regarding
$\overline{\mathcal{E}_{\alpha}}(q)$.  
Using the commutation relations with Nakajima operators, it
suffices to restrict to the case of $n=1$. If we write everything
in terms of symmetric functions, the first equality is equivalent
to
$$\langle \mathsf{s}_{(m)}(z)\otimes 1| \overline{\mathcal{E}_{\alpha}(q)}| \mathsf{s}_{(m-1)}(z)\otimes
\mathsf{s}_{1}(z)\rangle = (-1)^{m}(-q)^{m-1}.$$

This identity then follows from the eigenvalue calculations of the
last section:
\begin{multline*}
\langle \mathsf{s}_{(m)}(z)\otimes 1|\overline{\mathcal{E}_{\alpha}(q)}|
\mathsf{s}_{(m-1)}(z)\otimes \mathsf{s}_{1}(z)\rangle = \langle \mathsf{s}_{(m)}(z)\otimes 1|
\overline{\mathcal{E}_{\alpha}(q)}|\mathfrak{p}_{-1}(1)
\mathsf{s}_{(m-1)}(z)\otimes 1\rangle - \\ \langle \mathsf{s}_{(m)}(z)\otimes
1|\overline{\mathcal{E}_{\alpha}(q)}| (\mathsf{s}_{(m-1)}(z)\cdot
\mathsf{s}_{1}(z))\otimes 1\rangle
= \langle \mathfrak{p}_{-1}^{\ast}(1)\mathsf{s}_{(m)}(z)\otimes 1|
\overline{\mathcal{E}_{\alpha}(q)}| \mathsf{s}_{(m-1)}(z)\otimes 1\rangle -\\
\langle \mathsf{s}_{(m)}(z)\otimes
1|\overline{\mathcal{E}_{\alpha}(q)}|(\mathsf{s}_{(m)}(z)+\mathsf{s}_{(m-1,1)}(z))\otimes 1\rangle
=  \langle \mathsf{s}_{(m-1)}(z)\otimes 1| \overline{\mathcal{E}_{\alpha}(q)}|
\mathsf{s}_{(m-1)}(z)\otimes 1\rangle + \\
 \langle \mathsf{s}_{(m)}(z)\otimes 1|
\overline{\mathcal{E}_{\alpha}(q)}| \mathsf{s}_{(m)}(z)
\otimes 1\rangle
= (-1)^{m-1}e(\emptyset,q)(e((m-1),q) - e((m),q)) =\\
-(-1)^{m-1}(-q)^{m-1}.
\end{multline*}
For the second equality, we replace the partition $(m)$ with
$(1^{m})$ which is equivalent to replacing $q$ with $1/q$.
\end{proof}

\section{Proofs of Main Results}

\subsection{Punctual Contribution}

In this section, we explain how to identify the punctual
contributions of the operators $\Theta$ and $\Omega$, that is to show
$(t_1+t_2) \Omega_{0}(q) = \Theta_{0}(q)$ This
contribution is expressed in terms of the two-point operator for
$\Hb(\mathbb{C}^{2})$.  In fact, Kiem and Li \cite{kiemli} have used the case of $\mathbb{C}^2$ to calculate 
the punctual two-point operator for an arbitrary surface $S$ using
a more general version of the reduced class arguments we make here.
However, since we need an equivariant statement, we argue directly.

We first rewrite $\Omega_{0}(q)$ in terms of  the
identification $\mathcal{F}_{\An}\otimes \QQ(t_1,t_2) = \bigotimes
\mathcal{F}_{\mathbb{C}^{2}}\otimes \QQ(t_1,t_2)$. It follows from
definition that
\begin{align}\label{omega0}
(t_1+t_2)\Omega_{0}(q) &= -(t_1+t_2)\sum_{i=1}^{n+1} \sum_{k\geq 1}
\frac{1}{w_{L}^{i}w_{R}^{i}}\mathfrak{p}_{-k}([p_{i}])\mathfrak{p}_{k}([p_{i}])
\log\left(\frac{1-(-q)^k}{1-(-q)}\right)\\
&= \sum_{i=1}^{n+1} \Theta_{\mathbf{C}^{2},i}(q) \nonumber
\end{align}
where the last equality denotes the two-point operator for
$\Hb(\mathbb{C}^{2})$ acting on the $i$-th factor of the tensor product
decomposition, as calculated in \cite{okpanhilb}. Our goal is to
prove the same statement for $\Theta_{0}(q)$.

The first step is to apply the reduced class construction to prove
an analog of the factorization statement.

\begin{prop}\label{punctual1}
We have the following two identities for insertions labelled with
$1$.

\begin{multline*}
\langle \mu(1) \prod \lambda_{i}(\omega_{i})| \Theta_{0}| \nu(1)
\prod \rho_{i}(\omega_{i})\rangle = \langle
\mu(1)|\Theta_{0}|\nu(1)\rangle \cdot \langle \prod
\lambda_{i}(\omega_{i})| \prod\rho_{i}(\omega_{i})\rangle +\\
\langle \mu(1)|\nu(1)\rangle \cdot \langle \prod
\lambda_{i}(\omega_{i})|
\Theta_{0}(q)|\prod\rho_{i}(\omega_{i})\rangle
\end{multline*}
\begin{equation*}
 \langle \mu(1)|
\Theta_{0}(q) | \nu(1)\rangle = (t_1+t_2)\langle \mu(1) | \Omega_{0}(q) |
\nu(1)\rangle
\end{equation*}
\end{prop}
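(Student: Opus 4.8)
The plan is to deduce both identities from the single operator statement
\[
\Theta_{0}(q)=\sum_{a=1}^{n+1}\Theta_{\mathbb{C}^{2},a}(q),
\]
where $\Theta_{\mathbb{C}^{2},a}(q)$ is the two-point operator of $\Hb(\mathbb{C}^{2})$ with equivariant parameters $w^{L}_{a},w^{R}_{a}$ (which satisfy $w^{L}_{a}+w^{R}_{a}=t_{1}+t_{2}$) acting on the $a$-th tensor factor of $\mathcal{F}_{\An}\otimes\QQ(t_1,t_2)=\bigotimes_{a}\mathcal{F}_{\mathbb{C}^{2},a}$. Granting this, the second identity of Proposition~\ref{punctual1} is exactly \eqref{omega0}. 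For the first, note that by the formula of \cite{okpanhilb} recorded in \eqref{omega0} each $\Theta_{\mathbb{C}^{2},a}$ is $(t_{1}+t_{2})$ times a sum $\sum_{k\ge 1}c_{k}\,\pp_{-k}([p_{a}])\,\pp_{k}([p_{a}])$ with all creation operators to the left of all annihilation operators, hence is a derivation of $\mathcal{F}_{\An}$ for the Nakajima product (a one-line Heisenberg computation using $[\pp_{k}(\gamma),\pp_{-k}(\gamma')]=-k\langle\gamma,\gamma'\rangle$ on Fock space). Since $\langle 1,\omega_{i}\rangle=0$, the subspace spanned by Nakajima cycles with all parts labelled $1$ and the subspace spanned by those with all parts labelled among $\omega_{1},\dots,\omega_{n}$ are Poincar\'e-orthogonal and each is preserved by $\Theta_{0}$; writing $|\nu(1)\prod\rho_{i}(\omega_{i})\rangle=|\nu(1)\rangle\cdot|\prod\rho_{i}(\omega_{i})\rangle$ and applying the Leibniz rule then yields precisely the two-term expression claimed.

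To establish $\Theta_{0}=\sum_{a}\Theta_{\mathbb{C}^{2},a}$ I would argue as in Proposition~\ref{factorization}, but $T$-equivariantly throughout, since punctual invariants---unlike the non-punctual ones of Section~\ref{geometriccalculations}---are genuine rational functions of $t_{1},t_{2}$ and cannot be replaced by nonequivariant constants. First expand all insertions in the fixed-point Nakajima basis of Section~\ref{bases} via $1=\sum_{a}[p_{a}]/(w^{L}_{a}w^{R}_{a})$ and $\omega_{i}=\sum_{a}(\omega_{i}|_{p_{a}})/(w^{L}_{a}w^{R}_{a})\,[p_{a}]$, reducing to matrix elements between $[p_{a}]$-labelled Nakajima cycles. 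For a nonzero punctual class $\beta$ every stable map is contracted by the Hilbert--Chow morphism, so the support of the universal subscheme is constant; meeting the two $[p_{a}]$-labelled cycles forces a common support profile $(m_{a})_{a}$, and near such configurations the Hilbert scheme is a product over $a$ of punctual Hilbert schemes $\Hb_{m_{a}}(p_{a})$, so the moduli space sits inside $\prod_{a}\Mbar_{0,2}(\Hb_{m_{a}}(p_{a}),\beta_{a})$ with $\beta=\sum_{a}\beta_{a}$. As in Proposition~\ref{factorization}, the standard obstruction theory splits over $a$, and each factor with $\beta_{a}\neq 0$ contributes a trivial line bundle of weight $-(w^{L}_{a}+w^{R}_{a})=-(t_{1}+t_{2})$ to the reduced obstruction theory, coming from the holomorphic symplectic form on the $a$-th factor; if two or more $\beta_{a}$ are nonzero, the reduced invariant vanishes. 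Hence exactly one $\beta_{a}$ is nonzero, and the corresponding residue is the reduced two-point invariant of $\Hb_{m_{a}}(\mathbb{C}^{2})$ (with parameters $w^{L}_{a},w^{R}_{a}$) and the $[p_{a}]$-parts as insertions, times the classical Poincar\'e pairings of the remaining parts; restoring the factor $(t_{1}+t_{2})$ and summing over $a$ gives the claim. (Equivalently one may prove the Leibniz rule first, directly and geometrically along these lines, and then treat the pure-$1$ case separately; the mechanism is identical.)

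The hard part will be the reduced-class bookkeeping in the preceding paragraph: verifying that near each fixed point $p_{a}$ the relevant component of $\Mbar_{0,2}(\Hb_{m}(\An),\beta)$ really is an open part of a product of punctual two-point moduli spaces, with compatibly split standard \emph{and} reduced obstruction theories---including the ``support-moving'' normal directions of weights $w^{L}_{a},w^{R}_{a}$, which must be accounted for exactly as the $t_{1},t_{2}$-directions are in the $\mathbb{C}^{2}$ calculation of \cite{okpanhilb}---so that the ``two trivial factors vanish'' argument applies. The single point requiring genuine care beyond Proposition~\ref{factorization} is that we cannot deform to generic nonequivariant point classes here, so the factorization of the moduli spaces and of the obstruction theories must be checked directly over $R=\QQ[t_1,t_2]_{(t_1+t_2)}$. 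Once this is in place, the reduction to the $\Hb(\mathbb{C}^{2})$ computation of \cite{okpanhilb}, the matching with \eqref{omega0}, and the derivation of the two stated identities are all routine.
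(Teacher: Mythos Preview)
There is a genuine gap in your second paragraph, at the step ``if two or more $\beta_a$ are nonzero, the reduced invariant vanishes.'' With all insertions taken in the fixed-point Nakajima basis, the two-point invariant is a polynomial in $t_1,t_2$ of large degree. The presence of a second trivial weight-$(t_1+t_2)$ factor in the obstruction theory only shows that the standard invariant is divisible by $(t_1+t_2)^2$, i.e.\ that the reduced invariant is divisible by $(t_1+t_2)$; it does \emph{not} force it to vanish. In Proposition~\ref{factorization} the analogous step succeeds precisely because only the $1$-labelled parts on \emph{one} side are promoted to fixed-point classes: the dimension count then pins the reduced invariant at equivariant degree zero, so divisibility by $(t_1+t_2)$ becomes vanishing. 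That degree-zero constraint is exactly what you lose by expanding \emph{all} insertions at the $p_a$, and it is also what licenses deforming the equivariant class $[p_i]$ to a generic point class $[\xi]$ off the exceptional locus---the move you explicitly rule out.

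The paper's proof exploits this asymmetry. For the first identity it replaces only the $1$-labels on the shorter side by $[p]$ with $p$ away from the exceptional locus, obtains a nonequivariant reduced invariant, and then the two-term Leibniz expression arises from the dichotomy ``curve concentrated at $p$'' versus ``curve concentrated on the exceptional locus''; the cross term vanishes by the degree-zero-plus-two-trivial-factors argument. For the second identity the same mechanism localizes the curve to a single $p_i$, where the identification with the $\Hb(\mathbb{C}^2)$ invariant is literal. The full operator identity $\Theta_0=\sum_a\Theta_{\mathbb{C}^2,a}$ that you take as your starting point is Proposition~\ref{punctualtheorem}, proved \emph{afterwards} by induction on $m$ from Propositions~\ref{punctual1} and~\ref{punctual2}. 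Your reversal of this order is what forces you into the high-degree regime where the reduced-class argument no longer closes; what you have actually established is only $\Theta_0\equiv\sum_a\Theta_{\mathbb{C}^2,a}\bmod(t_1+t_2)^2$ in the fixed-point basis, and turning that into the exact identities of Proposition~\ref{punctual1} still requires the degree bound you discarded.
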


\begin{proof}

For the first claim, we proceed exactly as in the proof of
Proposition \ref{factorization}.  If $l(\mu) \geq l(\nu)$, we
expand the cohomology class $1$ in terms of fixed points $[p_{i}]$.
The same dimension analysis forces $l(\mu) = l(\nu)$ and in fact
$\mu = \nu$.  Moreover, we again rewrite the invariant (up to a factor of $(t_1+t_2)$) as a
nonequivariant reduced invariant
$$\left( \mu(1) \prod \lambda_{i}(\omega_{i}), \nu([p]) \prod
\rho_{i}(\omega_{i})\right)^{\mathrm{red}}$$ where $p$ is a point
on $\An$ away from the exceptional locus.  Again, if the curve
class is not contracted over both $p$ and the exceptional locus,
there is a second trivial factor to the obstruction theory that
forces vanishing.  The only difference is that both possibilities
can now occur, since we have a punctual curve class. They give the
two contributions to the right-hand side.

For the second claim, we use the expression for $\Omega_{0}(q)$
from equation \eqref{omega0}.  Again, it suffices to prove the
claim after expanding $1$ in terms of fixed points in the
insertions associated to $\nu$.  The same nonequivariant-reduced argument shows that
$$
\frac{\langle \mu(1)| \Theta_{0}(q)| \prod \nu_{i}([p_{i}])
\rangle}{\langle \mu(1) | \prod \nu_{i}([p_{i}])\rangle} =
\sum_{i=1}^{n+1} \frac{\langle \mu(1)| \Theta_{0}(q)|
\nu_{i}([p_{i}]) \rangle}{\langle \mu(1) |
\nu_{i}([p_{i}])\rangle}.
$$
The analogous equality is trivially true for $\Omega_{0}(q)$ using the
tensor-product decomposition of \eqref{omega0}.  This reduces the
claim to the invariant $\langle \mu(1)| \Theta_{0}(q)|\mu([p_{i}])\rangle$ for fixed $i$.  Since
everything is concentrated at a single fixed point, we can
directly identify the invariant with a calculation on $\mathbb{C}^{2}$ to
give
$$\langle \mu(1)| \Theta_{0}(q)| \mu([p_{i}])\rangle = \langle \mu(1)|
\Theta_{\mathbf{C}^{2},i}(q)|\mu([p_{i}])\rangle = (t_1+t_2)\langle \mu(1)|
\Omega_{0}(q)| \mu([p_{i}])\rangle.$$

\end{proof}

\begin{prop}\label{punctual2}
\begin{multline*}\langle \mu(1) \prod \lambda_{i}(\omega_{i})|
\Omega_{0}| \nu(1) \prod \rho_{i}(\omega_{i})\rangle = \langle
\mu(1)|\Omega_{0}|\nu(1)\rangle \cdot \langle \prod
\lambda_{i}(\omega_{i})| \prod\rho_{i}(\omega_{i})\rangle +\\
\langle \mu(1)|\nu(1)\rangle \cdot \langle \prod
\lambda_{i}(\omega_{i})|
\Omega_{0}(q)|\prod\rho_{i}(\omega_{i})\rangle
\end{multline*}
\end{prop}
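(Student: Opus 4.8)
The plan is to prove the identity by a direct operator computation; since, unlike $\Theta_0$, the operator $\Omega_0(q)$ is given by an explicit Heisenberg expression, no geometry is needed. The essential input is that the identity class is orthogonal to every divisor class on $\An$ under the Poincare pairing, i.e. $\langle 1,\omega_i\rangle=\langle 1,E_i\rangle=0$ for all $i$. This follows from the explicit tangent weights — one computes $w^L_{i+1}=-w^R_i$, so the two fixed-point contributions to $\int_{\An}[E_i]$ cancel — and it is also forced by the Heisenberg embedding into $\gotg\otimes\QQ(t_1,t_2)$, since $\mathrm{Id}(k)$ (the image of a multiple of $\pp_k(1)$) brackets trivially with the traceless combination $e_{ii}(l)-e_{i+1,i+1}(l)$ (the image of $\pp_l(E_i)$). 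Consequently $\HH^{\ast}_T(\An,\QQ)$ decomposes \emph{orthogonally} as $\QQ[t_1,t_2]\cdot 1\oplus\bigoplus_i\QQ[t_1,t_2]\,\omega_i$, and the Fock space splits accordingly as a tensor product of commuting Heisenberg modules
$$\mathcal F_{\An}=\mathcal F_1\otimes\mathcal F_\omega,$$
where $\mathcal F_1$ is generated by the operators $\pp_{-k}(1)$ and $\mathcal F_\omega$ by the operators $\pp_{-k}(\omega_i)$ $(1\le i\le n,\ k>0)$, the vacuum is $v_\emptyset\otimes v_\emptyset$, and the Nakajima vector $\mu(1)\prod\lambda_i(\omega_i)$ corresponds to $\mu(1)\otimes\prod\lambda_i(\omega_i)$.

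Next I would split the defining formula for $\Omega_0(q)$ into its $1$-term and its divisor-term and observe that, under this identification,
$$\Omega_0(q)=\Omega_0^{(1)}(q)\otimes\mathrm{Id}+\mathrm{Id}\otimes\Omega_0^{(\omega)}(q),$$
where $\Omega_0^{(1)}(q)=-\sum_{k\ge 1}(n+1)t_1t_2\,\pp_{-k}(1)\pp_k(1)\,L_k(q)$ acts on $\mathcal F_1$, $\Omega_0^{(\omega)}(q)=-\sum_{k\ge 1}\sum_i\pp_{-k}(E_i)\pp_k(\omega_i)\,L_k(q)$ acts on $\mathcal F_\omega$, and $L_k(q)=\log\!\big((1-(-q)^k)/(1-(-q))\big)$. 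That each summand respects the relevant tensor factor is precisely the statement that $\pp_{\pm k}(1)$ commutes with $\pp_{\pm l}(\omega_i)$ and that $\pp_{-k}(E_i),\pp_k(\omega_i)$ commute with $\pp_{\pm l}(1)$, which follows from $\langle 1,\omega_i\rangle=0$. Moreover, since each summand of $\Omega_0^{(1)}$ (resp. $\Omega_0^{(\omega)}$) ends in an annihilation operator $\pp_k(1)$ (resp. $\pp_k(\omega_i)$) with $k\ge 1$, we have $\Omega_0^{(1)}v_\emptyset=\Omega_0^{(\omega)}v_\emptyset=0$.

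The proposition then follows by a short computation. Applying $\Omega_0(q)$ to $\nu(1)\otimes\prod\rho_i(\omega_i)$, the cross terms vanish by the vacuum relations, leaving $(\Omega_0^{(1)}\nu(1))\otimes\prod\rho_i(\omega_i)+\nu(1)\otimes(\Omega_0^{(\omega)}\prod\rho_i(\omega_i))$; pairing against $\mu(1)\otimes\prod\lambda_i(\omega_i)$ and using multiplicativity of the Fock inner product over the tensor product gives
$$\langle\mu(1)\prod\lambda_i(\omega_i)|\Omega_0|\nu(1)\prod\rho_i(\omega_i)\rangle=\langle\mu(1)|\Omega_0^{(1)}|\nu(1)\rangle\,\langle\prod\lambda_i(\omega_i)|\prod\rho_i(\omega_i)\rangle+\langle\mu(1)|\nu(1)\rangle\,\langle\prod\lambda_i(\omega_i)|\Omega_0^{(\omega)}|\prod\rho_i(\omega_i)\rangle.$$
Applying the same computation with $\lambda_i=\rho_i=\emptyset$ (resp. $\mu=\nu=\emptyset$) and again using the vacuum relations identifies $\langle\mu(1)|\Omega_0^{(1)}|\nu(1)\rangle$ with $\langle\mu(1)|\Omega_0|\nu(1)\rangle$ and $\langle\prod\lambda_i(\omega_i)|\Omega_0^{(\omega)}|\prod\rho_i(\omega_i)\rangle$ with $\langle\prod\lambda_i(\omega_i)|\Omega_0|\prod\rho_i(\omega_i)\rangle$, which turns the display into the asserted formula. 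The only step where I expect to need any real care is the first one — verifying that $1$ is orthogonal to the divisor classes, hence that both $\mathcal F_{\An}$ and $\Omega_0$ split as stated; this is the algebraic shadow of the geometric dichotomy (a punctual curve contracted over the exceptional locus versus over a generic point of $\An$) that underlies the proof of Proposition \ref{punctual1} for $\Theta_0$, and everything after it is formal bookkeeping.
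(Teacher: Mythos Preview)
Your proposal is correct and follows essentially the same approach as the paper, which dispatches the proposition in one line: ``This follows immediately from the fact that the operators $\mathfrak{p}_{k}(1)$ and $\mathfrak{p}_{k}(\omega_{i})$ commute.'' You have simply unpacked this remark in full detail --- the orthogonality $\langle 1,\omega_i\rangle=0$, the resulting tensor splitting $\mathcal{F}_{\An}=\mathcal{F}_1\otimes\mathcal{F}_\omega$, and the decomposition $\Omega_0=\Omega_0^{(1)}\otimes\mathrm{Id}+\mathrm{Id}\otimes\Omega_0^{(\omega)}$ --- which is exactly what the paper's sentence is gesturing at.

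One small comment: once you have established $\Omega_0=\Omega_0^{(1)}\otimes\mathrm{Id}+\mathrm{Id}\otimes\Omega_0^{(\omega)}$, there are no ``cross terms'' to worry about when applying it to a pure tensor, so the appeal to vacuum relations at that step is superfluous (the vacuum relations are genuinely needed only in your final identification of $\langle\mu(1)|\Omega_0^{(1)}|\nu(1)\rangle$ with $\langle\mu(1)|\Omega_0|\nu(1)\rangle$, etc.).
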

\begin{proof}
This follows immediately from the fact that the operators
$\mathfrak{p}_{k}(1)$ and $\mathfrak{p}_{k}(\omega_{i})$ commute.
\end{proof}

The main proposition now follows quite easily.

\begin{prop}\label{punctualtheorem}
$$\Theta_{0}(q) = (t_1+t_2)\Omega_{0}(q).$$
\end{prop}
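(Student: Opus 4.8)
The plan is to reduce the claimed operator identity $\Theta_0(q) = (t_1+t_2)\Omega_0(q)$ to matrix coefficients in the Nakajima basis with respect to $\{1,\omega_1,\dots,\omega_n\}$ and then peel off the parts labelled by $1$ using the two factorization statements already proven. Concretely, fix cohomology-weighted partitions $\vec\mu = \mu(1)\prod\lambda_i(\omega_i)$ and $\vec\nu = \nu(1)\prod\rho_i(\omega_i)$; it suffices to show $\langle\vec\mu|\Theta_0(q)|\vec\nu\rangle = (t_1+t_2)\langle\vec\mu|\Omega_0(q)|\vec\nu\rangle$ for all such pairs, since this basis spans $H^\ast_T(\Hb_m(\An),\QQ)$ over $\QQ[t_1,t_2]$ and neither side introduces denominators divisible by $(t_1+t_2)$.

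First I would apply Proposition~\ref{punctual1} to the left-hand side and Proposition~\ref{punctual2} to the right-hand side. Subtracting, the two identities have exactly the same shape: each expresses $\langle\vec\mu|\,\cdot\,|\vec\nu\rangle$ as a sum of two terms, one with $\Theta_0$ (resp.\ $\Omega_0$) acting only on the $1$-labelled parts times the classical pairing $\langle\prod\lambda_i(\omega_i)|\prod\rho_i(\omega_i)\rangle$, and one with the classical pairing $\langle\mu(1)|\nu(1)\rangle$ times $\Theta_0$ (resp.\ $\Omega_0$) acting only on the $\omega$-labelled parts. So the desired equality will follow by induction on the total number of points $m$ once we verify it separately on the two pure pieces: on purely $1$-labelled classes and on purely $\omega$-labelled classes.

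For the purely $1$-labelled piece, the identity $\langle\mu(1)|\Theta_0(q)|\nu(1)\rangle = (t_1+t_2)\langle\mu(1)|\Omega_0(q)|\nu(1)\rangle$ is precisely the second assertion of Proposition~\ref{punctual1}, which was obtained by expanding $1$ in the fixed-point basis, reducing to a single fixed point via the nonequivariant-reduced argument, and invoking the $\Hb(\mathbb{C}^2)$ computation of \cite{okpanhilb} recorded in \eqref{omega0}. For the purely $\omega$-labelled piece one checks that both sides vanish: on the geometric side, the insertions $\prod\lambda_i(\omega_i)$ carry no $1$-label, so a curve class contributing to $\Theta_0$ must be punctual (by the criterion $(1,\omega_k)\cdot\beta=0$), yet a dimension count as in Proposition~\ref{factorization} combined with Lemma~\ref{div} forces these punctual two-point invariants to vanish unless one inserts a $1$-labelled part to absorb the degree shift; on the operator side, $\Omega_0$ is a sum of the normally-ordered bilinears $\pp_{-k}(1)\pp_k(1)$ and $\pp_{-k}(E_i)\pp_k(\omega_i)$, and since $\langle E_i, \omega_j\rangle = \delta_{ij}$ while $\langle 1,1\rangle = 0$, the matrix entry of $\Omega_0$ between two purely $\omega$-labelled states likewise vanishes after one commutes the annihilation parts through — the only surviving contractions would require an $\omega$-state to pair nontrivially with $\pp_k(1)$ or with $\pp_k(\omega_i)$, neither of which occurs. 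Assembling the induction then yields $\Theta_0(q) = (t_1+t_2)\Omega_0(q)$ on all of Fock space.

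The main obstacle is the purely $\omega$-labelled vanishing: one must be careful that the dimension/divisibility argument genuinely kills \emph{all} punctual contributions to $\langle\prod\lambda_i(\omega_i)|\Theta_0|\prod\rho_i(\omega_i)\rangle$ and not merely their leading term modulo $(t_1+t_2)^2$, and that the operator-side computation correctly accounts for the normal ordering in $\Omega_0$ when the two states have the same number of points. Both points are routine given the pairing structure $\langle 1, 1\rangle = 0$, $\langle E_i,\omega_j\rangle = \delta_{ij}$ and the fact that $\Theta_0$ by definition only receives contributions from punctual classes, but they are where the argument has to be stated precisely.
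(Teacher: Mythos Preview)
Your inductive scheme is correct up to the point where you must treat the purely $\omega$-labelled matrix elements directly, but the vanishing you assert there is \emph{false} on both sides, and this is the genuine gap.

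On the operator side, the annihilation operator $\pp_k(\omega_i)$ \emph{does} contract nontrivially against $\omega$-labelled creation operators: the commutator is $[\pp_k(\omega_i),\pp_{-k}(\omega_j)]=-k\langle\omega_i,\omega_j\rangle$, and $\langle\omega_i,\omega_j\rangle$ is (up to sign) the $(i,j)$-entry of the inverse Cartan matrix, which is never zero. So the term $\pp_{-k}(E_i)\pp_k(\omega_i)$ in $\Omega_0$ acts nontrivially on purely $\omega$-labelled states. Already for $\mathcal{A}_1$ with $m=2$ one finds $\langle (2,\omega_1)\mid\Omega_0\mid (2,\omega_1)\rangle\ne 0$. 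Your sentence ``the only surviving contractions would require an $\omega$-state to pair nontrivially with $\pp_k(1)$ or with $\pp_k(\omega_i)$, neither of which occurs'' mixes up which pairing governs the Heisenberg contractions.

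On the geometric side, the dimension/divisibility argument you invoke from Proposition~\ref{factorization} does not apply: that argument used \emph{compact} (fixed-point) insertions to force polynomiality of the invariant, whereas the $\omega_i$ are not compactly supported. There is no reason for $\langle\prod\lambda_i(\omega_i)\mid\Theta_0\mid\prod\rho_i(\omega_i)\rangle$ to vanish, and indeed (since the proposition is true) it equals $(t_1+t_2)$ times the nonzero quantity above.

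The paper closes this case by a different, short manoeuvre. It switches to the Nakajima basis with fixed-point labels $[p_i]$, where both operators are block-diagonal in the obvious sense: punctual curves cannot move support between distinct fixed points, and $\Omega_0=\sum_i\Theta_{\mathbb{C}^2,i}$ via \eqref{omega0}. Given any state $\prod_i\mu_i([p_i])$ with $\mu_1\ne\emptyset$, one rewrites one label $[p_1]=c_0\cdot 1+\sum_{j\ge 2}c_j[p_j]$; the $[p_j]$-terms die by block-diagonality (the support at $p_1$ no longer matches), leaving a state with a $1$-labelled part, to which Propositions~\ref{punctual1} and~\ref{punctual2} plus the inductive hypothesis apply. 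This avoids ever having to evaluate the purely $\omega$-labelled block directly.
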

\begin{proof}
We proceed by induction on the number of points $m$. We first
observe that if either $\overrightarrow{\mu}$ or
$\overrightarrow{\nu}$ contains a part labelled by $1$, then 
Propositions \ref{punctual1} and \ref{punctual2} along with the inductive hypothesis
establish the equality.

We work in the Nakajima basis with respect to the fixed points
$p_{i}$.  Given multipartitions $\overrightarrow{\mu}$,
$\overrightarrow{\nu}$, there is a trivial vanishing
$$\langle \prod \mu_{i}([p_{i}])| \Theta_{0}(q) | \prod \nu_{i}([p_{i}])\rangle = \langle
\prod \mu_{i}([p_{i}])| \Omega_{0}(q) | \prod
\nu_{i}([p_{i}])\rangle = 0$$ if there exists an $i$ for which
$$|\mu_{i}| \ne |\nu_{i}|.$$
For $\Omega_{0}(q)$ this follows from the tensor-product
decomposition; for $\Theta(q)$, this is because the number of
points concentrated at each fixed point must be constant for
punctual curves. In the general case, if we assume $\mu_{1} \ne
\emptyset$, we expand
$$[p_{1}] = c_{0}\cdot 1 + c_{2}[p_{2}]+\dots +c_{n+1}[p_{n+1}]$$
for the insertions of $\mu_{1}$.  The terms corresponding to
$p_{2}, \dots, p_{n+1}$ cannot contribute by the same trivial
vanishing statement. We thus have
$$\langle \prod \mu_{i}([p_{i}])| \Theta_{0}(q) | \prod \nu_{i}([p_{i}])\rangle = \langle
\mu_{1}(c_{0}\cdot 1) \prod \mu_{i}([p_{i}])| \Theta_{0}(q) |
\prod \nu_{i}([p_{i}])\rangle$$ and the analogous equality for
$\Omega_{0}(q)$.  The claim then follows from our initial
observation.
\end{proof}

\subsection{Proof of Theorem \ref{theorem1}}

For the nonpunctual contributions, the equality $$\Theta_{+} =
(t_1+t_2)\Omega_{+}$$ follows from the arguments of section \ref{geometriccalculations} and
\ref{operatorcalculations}. More precisely, we have shown in these sections that both
operators satisfy factorization and vanishing statements. These are propositions \ref{factorization} and \ref{vanishing} for $\Theta_{+}$ and propositions \ref{omegafact} and \ref{omegavan} for $\Omega_{+}$.
The algorithm in section \ref{geometriccalculations} explains how these propositions
reduce the calculation of each operator to the precise
calculations of propositions \ref{fixedlemma3} and \ref{omegaexact}, where equality is
proven directly.  For the punctual contribution, the equality
$$\Theta_{0} = (t_1+t_2)\Omega_{0}$$
is proposition \ref{punctualtheorem}.

\subsection{Generation conjecture}\label{gener}

Recall corollary \ref{div mult} gives the operators for quantum
multiplication by divisors $D$ and $(1,\omega_{i})$:
\begin{gather*}
M_{D} = M_{D}^{cl} + (t_1+t_2) q \frac{d}{dq}\Omega(q,s_1,\dots,s_n)\\
M_{(1,\omega_{i})} = M_{(1,\omega_{i})}^{cl} +
(t_1+t_2)s_{i}\frac{d}{ds_{i}} \Omega_{+}(q,s_1,\dots,s_n).
\end{gather*}
Since quantum cohomology defines a graded commutative ring, these
operators commute.  It therefore makes sense to discuss the joint
spectrum of these operators.  We conjecture the following
nondegeneracy statement for this spectrum.

\begin{conjecture}
The joint eigenspaces for the operators $M_{D},
M_{(1,\omega_{i})}$ are one-dimensional for all $m>0$.
\end{conjecture}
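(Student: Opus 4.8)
The statement is equivalent to saying that $QH_{T}(\Hb_{m}(\An))$ is free of rank one over the commutative subalgebra generated by $M_{D},M_{(1,\omega_{1})},\dots,M_{(1,\omega_{n})}$, i.e.\ that these operators have simple joint spectrum. My plan is to localize and reduce to the punctual case: pass to the fraction field, use the operator formula of Theorem~\ref{theorem1} (equivalently Corollary~\ref{div mult}) to strip off the $s$-dependence, and invoke the known simplicity of the divisor spectrum for $\Hb_{m}(\mathbb{C}^{2})$. Since the joint eigenvalues lie in a finite extension of $\QQ(t_1,t_2)((q))[[s_1,\dots,s_n]]$ and distinctness of elements of such a ring is inherited from any specialization, it will suffice to exhibit the required number of distinct joint eigenvalues after setting $s_{1}=\dots=s_{n}=0$, \emph{provided} no degeneracy survives there; the last proviso is exactly the delicate point, addressed at the end.

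\emph{Reduction to type blocks.} At $s=0$ we have $\Omega_{+}(q,0)=0$, so $M_{(1,\omega_{i})}|_{s=0}=M^{cl}_{(1,\omega_{i})}$, which in the fixed-point basis is diagonal with eigenvalue $\sum_{k}|\lambda_{k}|\,\omega_{i}|_{p_{k}}$ on $[J_{\vec\lambda}]$. By equivariant formality and the localization theorem, $1,\omega_{1},\dots,\omega_{n}$ restrict to a $\QQ(t_1,t_2)$-basis of $\bigoplus_{k}H^{\ast}_{T}(p_{k})$; hence the linear system consisting of the $n$ equations $\sum_{k}|\lambda_{k}|\,\omega_{i}|_{p_{k}}=(\text{eigenvalue})$ together with $\sum_{k}|\lambda_{k}|=m$ recovers the \emph{type} $\vec m=(|\lambda_{1}|,\dots,|\lambda_{n+1}|)$. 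Thus the operators $M_{(1,\omega_{i})}|_{s=0}$ decompose $H^{\ast}_{T}(\Hb_{m}(\An))\otimes\QQ(t_1,t_2)$ into blocks indexed by $\vec m$, and it remains to show that $M_{D}|_{s=0}$ has simple spectrum on each such block.

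\emph{Reduction to $\mathbb{C}^{2}$.} Under $\mathcal{F}_{\An}\otimes\QQ(t_1,t_2)=\bigotimes_{k}\mathcal{F}_{\mathbb{C}^{2},k}$, the type-$\vec m$ block is $\bigotimes_{k}H^{\ast}_{T}(\Hb_{m_{k}}(\mathbb{C}^{2}))$ (after extension of scalars). Using $(t_1+t_2)\Omega_{0}=\Theta_{0}=\sum_{k}\Theta_{\mathbb{C}^{2},k}$ (Proposition~\ref{punctualtheorem} and its proof), the additivity $D|_{[J_{\vec\lambda}]}=\sum_{k}c_{w^{k}}(\lambda_{k})$ of the equivariant content over the factors, and the relation $w_{L}^{k}+w_{R}^{k}=t_1+t_2$, Corollary~\ref{div mult} gives
$$M_{D}|_{s=0}=\sum_{k}\mathrm{Id}\otimes\cdots\otimes M^{\mathbb{C}^{2},w^{k}}_{D}\otimes\cdots\otimes\mathrm{Id},$$
where $M^{\mathbb{C}^{2},w^{k}}_{D}$ is the full quantum divisor operator on $\Hb_{m_{k}}(\mathbb{C}^{2})$ with equivariant weights $w_{L}^{k},w_{R}^{k}$. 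By Okounkov-Pandharipande \cite{okpanhilb} each $M^{\mathbb{C}^{2},w^{k}}_{D}$ has simple spectrum, and their explicit formula expresses its eigenvalues as $a(\lambda;w^{k})=c_{w^{k}}(\lambda)+(t_1+t_2)\,b_{\lambda}(q;w_{L}^{k}/w_{R}^{k})$, where the quantum part is divisible by $(t_1+t_2)$ and vanishes at $q=0$, and depends on the weights only through the ratio $w_{L}^{k}/w_{R}^{k}$ by homogeneity. Hence the eigenvalues of $M_{D}|_{s=0}$ on the block are the sums $\sum_{k}a(\lambda_{k};w^{k})$ over multipartitions of type $\vec m$, and the conjecture reduces to the assertion that these sums are pairwise distinct.

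\emph{Main obstacle.} Since the equivariant content by itself does not separate partitions --- for instance $(4,1,1)$ and $(3,3)$ have the same content --- the genuinely quantum terms $b_{\lambda}$ must do the separating. Comparing $q^{0}$-parts shows $\sum_{k}a(\lambda_{k};w^{k})=\sum_{k}a(\mu_{k};w^{k})$ forces both $\sum_{k}c_{w^{k}}(\lambda_{k})=\sum_{k}c_{w^{k}}(\mu_{k})$ and $\sum_{k}b_{\lambda_{k}}(q;w_{L}^{k}/w_{R}^{k})=\sum_{k}b_{\mu_{k}}(q;w_{L}^{k}/w_{R}^{k})$. When $\vec\lambda$ and $\vec\mu$ differ in a single factor this is ruled out immediately by the Okounkov-Pandharipande simplicity, so the issue is confined to the case where they differ in two or more factors, where one must show that a linear coincidence among the restrictions $w^{k}$ cannot occur simultaneously with a coincidence among the $q$-series $b_{\lambda_{k}}$. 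This finite but subtle verification --- controlling the lowest non-vanishing $q$-coefficients of $b_{\lambda}-b_{\mu}$ and using that the weight ratios $w_{L}^{k}/w_{R}^{k}$ are pairwise distinct and no $w^{k}$ is a rational multiple of another --- is where I expect the real difficulty, and it is conceivable that a clean argument must instead keep $s\neq 0$ and treat the coupling operator $\Omega_{+}$ as a perturbation that lifts any residual degeneracy between the type blocks at second order, or transport the nondegeneracy across the Gromov-Witten/Donaldson-Thomas triangle of the Introduction, or extract it from the monodromy of the quantum differential equation of section~$7$. Granting block-wise simplicity at $s=0$, semicontinuity upgrades it to all $(q,s)$ and all $m>0$, which proves the conjecture.
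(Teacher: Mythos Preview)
The statement you are trying to prove is presented in the paper as a \emph{conjecture}, not a theorem: the authors write explicitly that they ``are unable to prove our nondegeneracy conjecture'' and instead establish only the partial result that the purely quantum contribution $M_{D}-M_{D}^{cl}=(t_{1}+t_{2})\,q\frac{d}{dq}\Omega$ has simple spectrum (via a first-order perturbation in $s$ applied to the degenerate eigenspaces of $\Omega_{0}$). There is therefore no proof in the paper to compare against.

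Your reduction to $s=0$ and to type blocks is correct, and the tensor decomposition $M_{D}|_{s=0}=\sum_{k}\mathrm{Id}\otimes\cdots\otimes M^{\mathbb{C}^{2},w^{k}}_{D}\otimes\cdots\otimes\mathrm{Id}$ does follow from the punctual analysis in the paper. But the step you label the ``main obstacle'' is a genuine gap, and it is exactly the difficulty the paper cannot overcome either. Simplicity of each tensor factor $M^{\mathbb{C}^{2},w^{k}}_{D}$ does not rule out coincidences among the sums $\sum_{k}a(\lambda_{k};w^{k})$ across distinct multipartitions of the same type; your proposed control via lowest $q$-coefficients of $b_{\lambda}-b_{\mu}$ is not carried out, and there is no evident mechanism forcing it to succeed. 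The paper makes the complementary observation: even after proving simplicity for the full quantum part of $M_{D}$, adding back the $q,s$-independent classical term could in principle re-introduce degeneracies, and unlike the $\mathbb{C}^{2}$ case no limit in the equivariant parameters suppresses that contribution. Your suggested alternatives (higher-order perturbation in $s$, transport along the GW/DT triangle, monodromy of the QDE) are reasonable avenues but none is executed here. As it stands, the proposal correctly isolates the obstruction and is honest about it, but does not resolve it; this is consistent with the paper's assessment that the conjecture remains open.
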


Under the assumption of this conjecture, we can immediately
generate the full quantum ring as well as all genus $0$
invariants.
\begin{corstar}
Assuming the above conjecture, the divisors $D$ and
$(1,\omega_{i})$ generate the small quantum cohomology ring
$H_{T}^{\ast}(\An, \QQ)$.  Moreover, we can calculate an arbitrary
$k$-point genus $0$ Gromov-Witten invariant for $\Hb(\An)$ in
terms of these operators.
\end{corstar}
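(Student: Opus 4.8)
The plan is to deduce the corollary from the nondegeneracy conjecture of Section~\ref{gener} in two steps: first show that $D$ and the classes $(1,\omega_i)$ generate the ring $QH_T(\Hb_m(\An))$ for every $m>0$, and then reconstruct all genus-$0$ invariants from this ring structure together with the two-point operator $\Omega$ of Theorem~\ref{theorem1} (equivalently $\Theta=(t_1+t_2)\Omega$).

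\emph{Step 1: generation.} Fix $m>0$, set $\Lambda=\QQ(t_1,t_2)((q))[[s_1,\dots,s_n]]$ and $A=QH_T(\Hb_m(\An))$, a free $\Lambda$-module of rank $N=\dim_{\QQ}H^{\ast}(\Hb_m(\An),\QQ)$. Since $\Lambda$ is a domain, extend scalars to an algebraic closure $\overline{L}$ of $\mathrm{Frac}(\Lambda)$; under quantum multiplication, $A\otimes_\Lambda\overline{L}$ is a commutative Frobenius $\overline{L}$-algebra, the pairing being the classical Poincar\'e pairing and $\langle a\circ b,c\rangle=\langle a,b,c\rangle^{\Hb}$. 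The operators $M_D,M_{(1,\omega_1)},\dots,M_{(1,\omega_n)}$ of Corollary~\ref{div mult} commute, and the conjecture says their joint spectrum is simple. Decompose $A\otimes\overline{L}$ into its generalized joint eigenblocks $B_a$; each $B_a$ is a local Gorenstein Artinian $\overline{L}$-algebra, and its genuine joint eigenspace is $\mathrm{Ann}_{B_a}(\mathfrak{m}'_a)$, where $\mathfrak{m}'_a$ is the ideal generated by the nilpotent parts of $M_D,M_{(1,\omega_i)}$ on $B_a$. By Gorenstein (Matlis) duality this annihilator has length $\mathrm{length}(B_a)-\mathrm{length}(\mathfrak{m}'_a)$, so the hypothesis that it is one-dimensional forces $\mathfrak{m}'_a=\mathfrak{m}_a$, i.e. each $B_a$ is generated as an algebra by the images of $D$ and the $(1,\omega_i)$. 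Lagrange interpolation in $n+1$ variables then exhibits the spectral idempotents as polynomials in $M_D,M_{(1,\omega_i)}$, and one concludes that $D$ and the $(1,\omega_i)$ generate $A\otimes\overline{L}$, hence $A$. Since quantum multiplication is a deformation of cup product (Section~\ref{endofgeneral}), the same classes generate the classical ring $H_T^{\ast}(\Hb_m(\An),\QQ)$.

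\emph{Step 2: reconstruction.} The cohomology $H^{\ast}(\Hb_m(\An),\QQ)$ is concentrated in even degree (immediate from the Nakajima basis) and, by Step 1, generated as an algebra by divisors. Hence the First Reconstruction Theorem of Kontsevich--Manin applies: every genus-$0$ Gromov--Witten invariant of $\Hb_m(\An)$ is determined by the two-point invariants and the three-point invariants through iterated use of the fundamental-class, divisor, and WDVV (splitting) relations. These tautological relations remain valid in the $T$-equivariant, non-compact setting of this paper, since the invariants are defined by $T$-equivariant residues on the compact $T$-fixed loci and the usual boundary-degeneration proofs go through $T$-equivariantly. Finally, the two-point invariants are $\Theta=(t_1+t_2)\Omega$ by Theorem~\ref{theorem1}, and the three-point invariants are exactly the structure constants of the quantum product, hence are read off from $M_D,M_{(1,\omega_i)}$ and the classical triple intersections via Corollary~\ref{div mult}. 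This proves the corollary.

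The step requiring the most care is the first one. Over a field the implication ``simple joint spectrum $\Rightarrow$ the commuting operators generate'' is routine; over $\Lambda$ one must handle the generalized joint eigenblocks, invoke the Frobenius (Gorenstein) structure of quantum cohomology to see the divisor classes generate each block, and then descend the conclusion from $\overline{L}$ back to $\Lambda$ (checking no torsion is introduced). A secondary point is verifying the hypotheses under which the Kontsevich--Manin recursion operates: vanishing of odd cohomology is immediate, and classical generation by divisors follows from Step 1 using that quantum multiplication specialises to the cup product on these generators.
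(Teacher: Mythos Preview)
Your argument is correct, and Step~2 matches the paper's: both invoke the Kontsevich--Manin reconstruction (the paper cites \cite{kon-manin,okpanhilb}) once the small quantum ring is known to be generated by divisors.

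In Step~1, however, you work considerably harder than necessary. The paper begins by observing that $QH_T(\Hb_m(\An))\otimes\QQ(q,s_1,\dots,s_n)$ is \emph{semisimple}, because it is a deformation of the classical equivariant cohomology, which is semisimple over $\QQ(t_1,t_2)$ (localization to the isolated $T$-fixed points gives a product of fields). Consequently every generalized eigenblock $B_a$ is already one-dimensional, and the idempotents themselves form a joint eigenbasis. The nondegeneracy conjecture then says the joint eigenvalues are distinct, and a straightforward Vandermonde argument shows that the monomials $M_D^{a}\prod_i M_{(1,\omega_i)}^{b_i}$ applied to $1^{m}$ span. Your Gorenstein/Matlis duality argument is valid and would be the right tool if semisimplicity were unavailable, but here it is bypassed entirely; in particular, your intermediate claim $\mathfrak{m}'_a=\mathfrak{m}_a$ is automatic since both ideals are zero. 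The payoff of your route is that it would apply to Frobenius algebras that are not semisimple, whereas the paper's route is shorter but relies on the specific feature of equivariant cohomology with isolated fixed points.

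A minor remark: your deduction that quantum generation implies classical generation (to feed into Kontsevich--Manin) requires a specialization from $\overline{L}$ back to $q=s_i=0$, which is not directly available from the algebraic closure of the fraction field. It is cleaner to note, as the paper does, that the reconstruction argument in \cite{okpanhilb} uses only generation of the \emph{small quantum} ring by divisors, so this detour is unnecessary.
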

\begin{proof}
The quantum cohomology ring
$H_{T}^{\ast}(\An,\QQ)\otimes\QQ(q,s_{1},\dots,s_{n})$ is
semisimple since it is a deformation of the semisimple classical
equivariant cohomology. The idempotents are simultaneous
eigenvectors for $M_{D}, M_{(1,\omega_{i})}$.  Given the nondegeneracy conjecture, a Vandermonde
argument shows that the vectors
$$M_{D}^{a}\prod_{i} M_{(1,\omega_{i})}^{b_{i}} (1^{m}),\quad a, b_{i}\geq 0$$
span the full cohomology ring.  The second statement follows from
standard reconstruction statements for varieties whose small
quantum cohomology ring is generated by divisors
(\cite{kon-manin,okpanhilb}).
\end{proof}

While we are unable to prove our nondegeneracy conjecture, we can
show the following partial result.

\begin{prop}
The operator for the purely quantum contribution
$$M_{D}(q,s_{1},\dots,s_{n}) - M_{D}^{cl} =
(t_{1}+t_{2})q\frac{d}{dq}\Omega(q,s_{1},\dots,s_{n})$$ has
distinct eigenvalues.
\end{prop}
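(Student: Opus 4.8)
The plan is to prove that the characteristic polynomial of $q\frac{d}{dq}\Omega$, acting on a fixed graded piece $H^{\ast}_{T}(\Hb_{m}(\An))\otimes\QQ(t_1,t_2)(q)(s_1,\dots,s_n)$, has nonzero discriminant. By the rationality of the matrix entries of the divisor operators noted in the Introduction, this discriminant is a rational function of $q,s_1,\dots,s_n,t_1,t_2$; the scalar $(t_1+t_2)$ is irrelevant, and since the matrix entries in the Nakajima basis are regular at $(t_1+t_2)$ and reduction commutes with taking discriminants, one may work modulo $(t_1+t_2)$ throughout. It then suffices to control the eigenvalues of $q\frac{d}{dq}\Omega$ in the limit $s_1,\dots,s_n\to 0$.

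At $s=0$ we have $q\frac{d}{dq}\Omega=q\frac{d}{dq}\Omega_0$, which by \eqref{omega0} is diagonal in the Nakajima basis associated to the fixed points $\{[p_i]\}$: on $\prod_i\mu_i([p_i])$ its eigenvalue is $\sum_{k\ge 1}kR_k\,h_k(q)$, where $R_k=\sum_i(\text{number of parts of }\mu_i\text{ equal to }k)$ and $h_k(q)=q\frac{d}{dq}\log\bigl((1-(-q)^k)/(1-(-q))\bigr)$. Since $h_1\equiv 0$, the functions $\{h_k\}_{k\ge 2}$ are linearly independent over $\QQ$ (compare coefficients of $(-q)^{j}$), and $\sum_k kR_k=m$, this eigenvalue is a rational function of $q$ which determines and is determined by the multiplicity vector $\vec R=(R_1,R_2,\dots)$. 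Thus the eigenspaces of $q\frac{d}{dq}\Omega_0$ are the subspaces $V_{\vec R}$ spanned by the fixed-point Nakajima vectors of prescribed total part-multiplicities, and its eigenvalues are pairwise distinct across distinct $\vec R$. Consequently two eigenvalues of $q\frac{d}{dq}\Omega$ can coincide as power series in $s$ only if their branches limit to the same eigenvalue of $q\frac{d}{dq}\Omega_0$, i.e. into a common $V_{\vec R}$; so I am reduced to showing that $q\frac{d}{dq}\Omega$, on the invariant subspace that deforms $V_{\vec R}$, has distinct eigenvalues.

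This I would attack by analytic perturbation theory. By the degree scaling of Proposition~\ref{degreescaling}, the part of $q\frac{d}{dq}\Omega_+=q\frac{d}{dq}(\Omega-\Omega_0)$ linear in the $s_i$ comes only from the simple roots and equals $\sum_{i=1}^{n}s_i\,q\frac{d}{dq}\mathcal{E}_{\alpha_{i,i+1}}(q)$. Hence the leading correction to the eigenvalues on $V_{\vec R}$ is the spectrum of the hopping operator $\sum_i c_i\,\pi_{\vec R}\bigl(q\frac{d}{dq}\mathcal{E}_{\alpha_{i,i+1}}(q)\bigr)\pi_{\vec R}$ for generic scalars $c_i$, where $\pi_{\vec R}$ is the coordinate projection onto $V_{\vec R}$. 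Using the commutation relations of the $\mathcal{E}_{\alpha}$ with Heisenberg operators from Section~\ref{operatorcalculations} and the diagonalization of Section~\ref{diagonalization} (the part preserving the occupation numbers at $p_i$ and $p_{i+1}$ is diagonal in the Jack/Schur tensor basis, with eigenvalue $-q\frac{d}{dq}\bigl[e(\lambda',q)e(\rho,q)\bigr]$), one can compute this operator on each $V_{\vec R}$.

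The crux, and the step I expect to be genuinely involved, is to show that this hopping perturbation has simple spectrum on every $V_{\vec R}$ — that is, to rule out accidental coincidences among its eigenvalues coming from the many ways of distributing the prescribed parts among the $n+1$ fixed points. The natural route is to degenerate further inside the degree-one part: the amplitude with which a part of size $k$ hops is proportional to the distinct Laurent monomial $k(-q)^{k}$, and a single such part moves along the chain $p_1,\dots,p_{n+1}$ by a tridiagonal operator whose eigenvalues, up to that common amplitude, are the distinct numbers $2\cos(\pi\ell/(n+2))$, $\ell=1,\dots,n+1$; the transcendence of $q$ together with the distinctness of these $q$-weights across part-sizes should then let one assemble the single-part spectra into a simple joint spectrum, reducing the claim to the nonvanishing of an explicit determinant built from the chain eigenvalues. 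Everything preceding this combinatorial input is a formal consequence of the operator formula of Theorem~\ref{theorem1} and the analysis of Sections~\ref{geometriccalculations}–\ref{operatorcalculations}.
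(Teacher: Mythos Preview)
Your overall strategy --- first-order perturbation in the $s$-variables, identifying the degenerate eigenspaces of $q\frac{d}{dq}\Omega_{0}$ with the spaces $V_{\vec R}=V_\mu$ indexed by partitions $\mu$ of $m$, then analysing the projection of the linear-in-$s$ term there --- is exactly the paper's approach (carried out for $\mathcal{A}_1$, with the assertion that the general case is analogous). The reduction modulo $(t_1+t_2)$ is harmless but buys nothing: the eigenvalues of the projected perturbation already turn out to be rational functions of $q$ alone.

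The genuine gap is your endgame. You correctly flag the crux as the simple-spectrum statement on each $V_{\vec R}$, but you miss the algebraic fact that disposes of it. Writing $r_k$ for the multiplicity of $k$ in $\mu$, one has $V_\mu\cong\bigotimes_k S^{r_k}(V_{(k)})$, and the paper's point is that the \emph{rescaled} operator $\frac{(1+q)^2}{q}\mathcal{E}^{0}_{\alpha}\big|_{V_\mu}$ respects this decomposition multiplicatively: it acts as the tensor product $\bigotimes_k S^{r_k}\!\bigl(\frac{(1+q)^2}{q}\mathcal{E}^{0}_{\alpha}\big|_{V_{(k)}}\bigr)$. The rescaling is precisely what normalises the vacuum eigenvalue to $1$, turning the action from additive into group-like. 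For $\mathcal{A}_1$ the two eigenvalues on $V_{(k)}$ are $1$ and $1+\frac{2}{k}\bigl((-q)^{k/2}-(-q)^{-k/2}\bigr)^{2}$, with $q$-independent eigenvectors, so the eigenvalues on $V_\mu$ are the products $\prod_k\bigl(1+\frac{2}{k}((-q)^{k/2}-(-q)^{-k/2})^{2}\bigr)^{s_k}$, visibly distinct; applying $q\frac{d}{dq}$ termwise preserves distinctness. Your proposed route does not see this: the claim that a size-$k$ part hops with amplitude $k(-q)^k$ is off (the actual gap after $q\frac{d}{dq}$ is $2\bigl((-q)^k-(-q)^{-k}\bigr)$), the tridiagonal eigenvalues $2\cos(\pi\ell/(n+2))$ only appear if all $c_i$ are equal and the diagonal is uniform, and --- most importantly --- the step of ``assembling single-part spectra into a joint spectrum'' is exactly the tensor-product factorisation above, without which your reduction to a determinant is not a reduction at all.
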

\begin{proof}
We will sketch the proof for $\mathcal{A}_{1}$; the same argument
works in general. In order to show that $q\frac{d}{dq}\Omega(q,s)$
has distinct eigenvalues, we write our operator as a power series
in $s$ and apply a perturbation theory argument.  That is, we
write
$$q\frac{d}{dq}\Omega(q,s) = q\frac{d}{dq}\Omega_{0}(q) + s q\frac{d}{dq}
\mathcal{E}(q)+O(s^{2}),$$ and study the restriction of
$q\frac{d}{dq}\mathcal{E}(q)$ to the degenerate eigenspaces of the
punctual operator $\Omega_{0}(q)$. From the explicit form of
$\Omega_{0}(q)$, we know that an eigenbasis is given by the
Nakajima basis with fixed-point insertions; for every partition
$\mu$ of $m$, we have a degenerate eigenspace
$$V_{\mu} = \mathrm{Span}\{ \mu_{1}([p_{1}])\mu_{2}([p_{2}]) | \mu_{1}+\mu_{2} = \mu\}.$$
It suffices to show that the restriction and projection of
$q\frac{d}{dq}\mathcal{E}(q)$ to each $V_{\mu}$ has distinct
eigenvalues \cite{kato}.

Let $r_{k}$ denote the multiplicity of $k$ as a part of $\mu$.
Then the eigenspace $V_{\mu}$ admits a tensor product
decomposition.
$$V_{\mu} = \otimes_{k} S^{r_{k}}(V_{(k)}).$$
Moreover, the first-order perturbation also admits a factorization
$$\frac{(1+q)^{2}}{q}\mathcal{E}_{0}(q)|_{V_{\mu}} = \otimes S^{r_{k}}
\left(\frac{(1+q)^{2}}{q}\mathcal{E}_{0}(q)|_{V_{(k)}}\right).$$ For
each fixed part $k$, the eigenvectors of
$\frac{(1+q)^{2}}{q}\mathcal{E}_{0}(q)|_{V_{(k)}}$ are easily seen
to be
$$\frac{1}{2t_{1}}k([p_{1}]) - \frac{1}{2t_{2}}k([p_{2}]),
\frac{1}{t_{2}-t_{1}}k([p_{1}]) + \frac{1}{t_{1}-t_{2}}k([p_{2}])
$$ with eigenvalues given by
$$1, 1+ \frac{2}{k}((-q)^{k/2}-(-q)^{-k/2})^{2}$$
respectively.  The eigenvectors and eigenvalues in the general
case are obtained by multiplying these eigenvalues over the
distinct parts of $\mu$.  The eigenvalues are given by
$$\prod_{k} \left( 1+ \frac{2}{k}((-q)^{k/2}-(-q)^{-k/2})^{2}\right)^{s_{k}},\quad 0\leq s_{k}
< r_{k}$$ and are clearly distinct for different values of
$s_{k}$.  Finally, the eigenvalues for the actual perturbation are
obtained by taking the derivative of the above expression and are
again clearly distinct.
\end{proof}

We consider this proposition good evidence for the generation
conjecture since we are taking an operator-valued function of $q$
and $s$ with nondegenerate spectrum and  adding an operator
with no $q,s$-dependence.  Unfortunately, this fact alone is not sufficient to
prove the claim. In the case of $\mathbb{C}^{2}$ where the purely
quantum part of the operator again has nondegenerate spectrum, it
is possible to find a limit in the equivariant parameters in which
the classical contribution is suppressed.  However, a similar
argument does not seem possible in our case.

\subsection{Comparison with Gromov-Witten theory of $\An\times\mathbf{P}^{1}$}

We follow here notation from section $1.3$ and section $4.1$ of
\cite{gwan}.  The generating function
$$\mathsf{Z}^{\prime}_{GW}(\An\times\mathbf{P}^1)_{\overrightarrow{\mu}, \overrightarrow{\nu},
\overrightarrow{\rho}}$$ encodes the all-genus Gromov-Witten
theory of $\An\times\mathbf{P}^{1}$ with relative conditions given
by the cohomology-weighted partitions $\vec{\mu}, \vec{\nu},
\vec{\rho}.$ In \cite{gwan}, it is explained how to calculate
these invariants when one of these relative conditions corresponds
to a divisor insertion.  By comparing with our calculations here,
we have the following proposition.
\begin{prop}
Under the variable substitution $q = -e^{iu}$, we have
\begin{align*}
(-1)^{m}\langle\overrightarrow{\mu},(2),\overrightarrow{\nu}\rangle^{\mathrm{Hilb}}_{\An}
=
(-iu)^{-1+l(\mu)+l(\nu)}\mathsf{Z}^{\prime}(\An\times\mathbf{P}^1)_{\overrightarrow{\mu},(2),\overrightarrow{\nu}}
\end{align*}
and
\begin{align*}
(-1)^{m}\langle\overrightarrow{\mu},(1,\omega_{k}),\overrightarrow{\nu}\rangle^{\mathrm{Hilb}}_{\An}
=
(-iu)^{l(\mu)+l(\nu)}\mathsf{Z}^{\prime}(\An\times\mathbf{P}^1)_{\overrightarrow{\mu},(1,\omega_{k}),\overrightarrow{\nu}}.
\end{align*}
\end{prop}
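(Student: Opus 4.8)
The plan is to reduce both sides of the identity to matrix elements of explicit operators on the Fock space $\mathcal{F}_{\An}$ and to match those operators under $q=-e^{iu}$. On the Hilbert scheme side, the cohomology-weighted partitions $(2)$ and $(1,\omega_k)$ are, in the notation of Section~\ref{Nak basis}, the divisors $-D$ and $(1,\omega_k)$; since the quantum product is defined so that $\langle\vec\mu|\vec\rho\circ\vec\nu\rangle=\langle\vec\mu,\vec\rho,\vec\nu\rangle^{\Hb}$, the three-point generating functions appearing on the left are exactly $-\langle\vec\mu|M_{D}|\vec\nu\rangle$ and $\langle\vec\mu|M_{(1,\omega_k)}|\vec\nu\rangle$. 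By Corollary~\ref{div mult} (which itself rests on Theorem~\ref{theorem1}), these are governed by $M_{D}^{cl}+(t_1+t_2)q\frac{d}{dq}\Omega$ and $M_{(1,\omega_k)}^{cl}+(t_1+t_2)s_k\frac{d}{ds_k}\Omega_{+}$. On the Gromov--Witten side, the results of \cite{gwan} express the relative theory of $\An\times\mathbf{P}^1$ with one relative condition a divisor as a matrix element of an explicit operator on $\mathcal{F}_{\An}$, assembled from the action of $\gotg$ on its basic representation in the same way. The proposition thus becomes the assertion that this operator, after the substitution $q=-e^{iu}$ and the normalization below, coincides with the quantum divisor operator; this is legitimate because the matrix entries of $\Omega$ are rational functions of $q$, so $q=-e^{iu}$ produces trigonometric functions whose $u$-expansions are the (all-genus) invariants $\mathsf{Z}'$.

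Next I would pin down the prefactors. The powers $(-iu)^{-1+l(\mu)+l(\nu)}$ and $(-iu)^{l(\mu)+l(\nu)}$ are the specializations of the uniform rule $(-iu)^{-m+l(\mu)+l(\nu)+l(\rho)}$ to $\vec\rho=(2)$ and $\vec\rho=(1,\omega_k)$, since $l((2,1^{m-2}))=m-1$ and $l((1^m))=m$. I would implement this power as conjugation of operators by the rescaling of $\mathcal{F}_{\An}$ that multiplies each Heisenberg creation operator $\pp_{-k}(\gamma)$ by $(-iu)$, which produces the factor $(-iu)^{l(\mu)+l(\nu)}$, together with the scalar $(-iu)^{-m}$ on $\mathcal{F}_{\An}^{(m)}$ absorbed on the divisor insertion and the sign $(-1)^m$ coming from the genus and orientation conventions of \cite{gwan}. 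This step is purely a matter of aligning the normalizations of relative insertions (the $\mathfrak{z}$-factors), the genus variable, and orientations on the two sides.

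The substantive step is the term-by-term comparison of the two operator formulas. Under $q=-e^{iu}$ the nonpunctual part $\Omega_{+}$, whose only curve-class dependence enters through $\sum_{k\in\ZZ}:e_{ji}(k)e_{ij}(-k):\log(1-(-q)^{k}s_i\cdots s_{j-1})$, becomes $\sum_{k\in\ZZ}:e_{ji}(k)e_{ij}(-k):\log(1-e^{iku}s_i\cdots s_{j-1})$, which matches the root-by-root structure of the $\An\times\mathbf{P}^1$ theory in \cite{gwan}, built from the same normally ordered $\gotg$-operators. The punctual part $\Omega_{0}$, with its factor $\log\!\big((1-(-q)^k)/(1-(-q))\big)$, goes to $\log\!\big((1-e^{iku})/(1-e^{iu})\big)$, i.e. the $\sin(ku/2)/\sin(u/2)$-type expression governing the $\An$-degree-zero (fiberwise) contributions; this is the $\mathbf{C}^2\times\mathbf{P}^1$ answer of \cite{localcurves,okpanp1} assembled over the $n+1$ toric charts as in \cite{gwan}. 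Finally, the classical pieces $M_{D}^{cl}$ and $M_{(1,\omega_k)}^{cl}$ must be matched with the lowest-order-in-$u$ term of $\mathsf{Z}'$, namely the genus-zero, $\An$-degree-zero relative invariant, which computes the same triple intersection on $\Hb_m(\An)$.

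The main obstacle is the convention-matching in the middle step: fixing the exact power of $(-iu)$ and the exact sign $(-1)^m$ requires tracking the genus variable, the orientation of the relative moduli spaces, and the $\mathfrak{z}$-normalizations of cohomology-weighted partitions used in \cite{gwan} against those used here, together with checking that the classical triple product really emerges at the bottom of the $u$-expansion. Once the conventions are aligned, the operator identity is forced: both operators are built from the same affine algebra $\gotg$ on its basic representation, with the same root-theoretic and punctual building blocks, and those blocks have already been evaluated explicitly in Propositions~\ref{fixedlemma3} and \ref{omegaexact} here and in the corresponding computations of \cite{gwan}.
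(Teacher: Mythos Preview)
Your approach is correct and essentially the same as the paper's: both sides are expressed as matrix elements of explicit operators on $\mathcal{F}_{\An}$ (via Corollary~\ref{div mult} here and the analogous formula in \cite{gwan}), and the proof reduces to matching those operators under $q=-e^{iu}$.

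The one noteworthy difference is in the execution of the match. The paper does not attempt the full term-by-term comparison you outline (punctual part, nonpunctual part, classical part, prefactors). Instead it invokes the root dependence and degree scaling established in Proposition~\ref{degreescaling} here, together with the analogous structural result proven in \cite{gwan} for the relative theory of $\An\times\mathbf{P}^1$; since both operators are determined by their restriction to a single root and to the coefficient of $s^1$, it suffices to check the case $n=1$ and compare $\Omega_{+}(q,s)$ with the operator $\Theta^{\bullet}(u,s)$ of \cite{gwan}. This shortcut sidesteps most of the convention-tracking you flag as the main obstacle: the normalization and sign issues only need to be resolved in the $\mathcal{A}_1$ case, where the formulas are short enough to be compared by hand. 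Your direct approach would also work, but requires carrying all the bookkeeping through for general $n$, which the reduction avoids.
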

\begin{proof}
Note that the substitution makes sense because our three-point
functions are rational functions of $q$. In \cite{gwan}, an analog
of the root dependence and degree scaling statement are proven; it
therefore suffices to consider $\mathcal{A}_{1}$ and the
coefficient of $s^{1}$.    In that case, we can compare
$\Omega_{+}(q,s)$ with the expression $\Theta^{\bullet}(u,s)$ from that
paper.
\end{proof}

There is a direct geometric relationship between the relative
Gromov-Witten theory of $\mathcal{A}_{1}\times\mathbf{P}^{1}$ and
the stationary theory of $\mathbf{P}^{1}$. This explains why the
operator controlling this latter theory played a role in section
\ref{diagonalization}.

We can use the all-genus relative Gromov-Witten theory of $\An
\times \mathbf{P}^{1}$ to define a new ring deformation of
$H_{T}^{\ast}(\Hb(\An),\QQ)$ over the ring
$$R^{\mathrm{GW}}=\mathbb{Q}(t_1,t_2)((u))[[ s_1,\dots, s_n]].$$

Given three cohomology-weighted partitions $\overrightarrow{\mu},
\overrightarrow{\nu}, \overrightarrow{\rho}$ of $m$, we define a
product $\ast$ using the following structure constants
$$\langle \overrightarrow{\mu}, \overrightarrow{\nu} \ast
\overrightarrow{\rho} \rangle =
(-iu)^{-m+l(\mu)+l(\nu)+l(\rho)}\mathsf{Z}^{\prime}(\An\times\mathbf{P}^1)_{\overrightarrow{\mu},
\overrightarrow{\nu}, \overrightarrow{\rho}.}$$ The following
proposition is established in \cite{gwan}.

\begin{prop}
The $R^{\mathrm{GW}}$-module $H_{T}^{\ast}(\Hb(\An),\QQ)\otimes R^{\mathrm{GW}}$ with the
product defined above satisfies the axioms of an $R^{\mathrm{GW}}$-algebra.
\end{prop}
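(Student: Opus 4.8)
The plan is to verify directly that the bilinear operation $\ast$ satisfies the axioms of an $R^{\mathrm{GW}}$-algebra, namely $R^{\mathrm{GW}}$-bilinearity, commutativity, existence of a two-sided unit, and associativity, where throughout one uses that the underlying pairing $\langle\,\cdot\,,\,\cdot\,\rangle$ is the $T$-equivariant Poincar\'e pairing on $H^{\ast}_{T}(\Hb_{m}(\An),\QQ)\otimes R^{\mathrm{GW}}$, which is nondegenerate after inverting the equivariant parameters; this nondegeneracy is what guarantees that the structure constants $\langle\overrightarrow{\mu},\overrightarrow{\nu}\ast\overrightarrow{\rho}\rangle$ determine $\overrightarrow{\nu}\ast\overrightarrow{\rho}$ unambiguously. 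Bilinearity is formal once one knows that $\mathsf{Z}^{\prime}(\An\times\mathbf{P}^1)_{\overrightarrow{\mu},\overrightarrow{\nu},\overrightarrow{\rho}}$ together with the normalizing factor $(-iu)^{-m+l(\mu)+l(\nu)+l(\rho)}$ genuinely lies in $R^{\mathrm{GW}}=\QQ(t_1,t_2)((u))[[s_1,\dots,s_n]]$; this is a bookkeeping statement that follows from the virtual dimension formula for the moduli of relative stable maps to $\An\times\mathbf{P}^1$ (which bounds from below the powers of $u$ that can occur) together with the fact that, in each fixed total degree over the $\An$ directions, only finitely many curve classes contribute, giving convergence in the $s_i$. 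Commutativity is then immediate: the generating function is symmetric under permutations of the three relative conditions, since $\mathrm{Aut}(\mathbf{P}^1)$ acts transitively on the marked points $0,1,\infty$, and the prefactor $(-iu)^{-m+l(\mu)+l(\nu)+l(\rho)}$ is visibly symmetric, so together with the symmetry of the Poincar\'e pairing one gets $\overrightarrow{\mu}\ast\overrightarrow{\nu}=\overrightarrow{\nu}\ast\overrightarrow{\mu}$.

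For the unit I would show that the cohomology-weighted partition $\mathbf{1}=\{(1,1)^{m}\}$, corresponding to the identity class of $\Hb_{m}(\An)$, is the $\ast$-identity. The point is that imposing the relative condition $\mathbf{1}$ at $\infty$ is the same as imposing no relative condition there (it is pulled back from a point), after which the two-pointed relative theory of $\An\times\mathbf{P}^1$ restricted to the locus of degree $0$ over the base reduces to the diagonal, i.e. to the $T$-equivariant Poincar\'e pairing, while all contributions of positive degree over the base carry strictly larger powers of $u$ and are accounted for by the normalization so as to leave only $(-iu)^{m-l(\mu)-l(\nu)}\langle\overrightarrow{\mu},\overrightarrow{\nu}\rangle$. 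This is the standard fundamental-class/string-type reduction combined with the rubber calculus set up in \cite{gwan}, and it gives $\langle\overrightarrow{\mu},\overrightarrow{\nu}\ast\mathbf{1}\rangle=\langle\overrightarrow{\mu},\overrightarrow{\nu}\rangle$, hence $\overrightarrow{\nu}\ast\mathbf{1}=\overrightarrow{\nu}$.

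The substantive axiom is associativity, which I would deduce from the degeneration formula for the relative Gromov-Witten theory of $\An\times\mathbf{P}^1$. Taking $\mathbf{P}^1$ with four special fibers and degenerating it to a chain $\mathbf{P}^1\cup_{\mathrm{pt}}\mathbf{P}^1$, the degeneration formula expresses the resulting four-point relative invariant as a sum over an intermediate cohomology-weighted partition, running over a basis of $H^{\ast}_{T}(\Hb_{m}(\An))$ paired against its Poincar\'e-dual basis, of products of two three-point relative invariants; carrying out the degeneration in the two inequivalent ways of splitting the four fibers into $2+2$ yields $\langle(\overrightarrow{\mu}\ast\overrightarrow{\nu})\ast\overrightarrow{\rho},\overrightarrow{\sigma}\rangle=\langle\overrightarrow{\mu}\ast(\overrightarrow{\nu}\ast\overrightarrow{\rho}),\overrightarrow{\sigma}\rangle$ for all $\overrightarrow{\sigma}$, and hence associativity by nondegeneracy of the pairing. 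Here lies the main obstacle, and the reason the result is quoted from \cite{gwan} rather than reproved: one must make the degeneration formula rigorous in this noncompact, $T$-equivariant, relative setting (working with equivariant residues and reduced virtual classes), and one must check that the transcendental prefactors behave multiplicatively across the gluing — each node identification shifts $l$ of the glued partition in a way exactly compensated by the normalization exponent $-m+\sum l$, so that the powers of $(-iu)$ match on both sides of the TQFT gluing. Once these two checks are carried out as in \cite{gwan}, the remaining algebra axioms follow formally.
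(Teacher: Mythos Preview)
The paper does not give its own proof of this proposition; it simply records that the result is established in \cite{gwan}. Your outline --- bilinearity by inspection, commutativity from the $\mathrm{Aut}(\mathbf{P}^1)$-symmetry of the three relative fibers, and associativity from the degeneration formula applied to a four-pointed $\mathbf{P}^1$ --- is exactly the standard TQFT argument and matches what is carried out in that reference, so in broad strokes you have reconstructed the intended proof.

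One point in your sketch is garbled, however. In the unit argument you speak of ``the locus of degree $0$ over the base'' and of ``contributions of positive degree over the base,'' but the degree over $\mathbf{P}^1$ is fixed equal to $m>0$ throughout; that is precisely what selects $\Hb_m(\An)$, and there is no degree-$0$ sector to isolate. The actual mechanism is different: with $\vec\rho=(1^m)$ one has $l(\rho)=m$, so the prefactor becomes $(-iu)^{l(\mu)+l(\nu)}$, and the three-point relative invariant with a transverse, identity-weighted fiber reduces (by degeneration, or by a direct forgetful/rigidification argument) to the two-fiber ``tube'' on $\An\times\mathbf{P}^1$, which is the identity operator of the TQFT. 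The genuine content is showing that this tube equals the Poincar\'e pairing after the $(-iu)$ normalization --- in particular that higher-genus and nonzero $\An$-degree corrections vanish --- and this is what is done in \cite{gwan}, the $\An$-degree part using the same holomorphic-symplectic/reduced-class mechanism that runs through the present paper.
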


In terms of this ring structure, corollary~\ref{div mult} states that we
can identify the multiplication operators for $(2)$ and $(1,
\omega_{i})$. Under the assumption of the generation conjecture,
this would imply the following general statement of the
Gromov-Witten/Hilbert correspondence. 

\begin{corstar}
Under the variable substitution $q = -e^{iu}$, we have
\begin{align*}
\langle\overrightarrow{\mu},\overrightarrow{\nu},\overrightarrow{\rho}\rangle^{\mathrm{Hilb}}_{\An}
=
(-iu)^{-m+l(\mu)+l(\nu)+l(\rho)}\mathsf{Z}^{\prime}(\An\times\mathbf{P}^1)_{\overrightarrow{\mu},\overrightarrow{\nu},\overrightarrow{\rho}}.
\end{align*}
\end{corstar}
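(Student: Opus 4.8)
The plan is to deduce this formally from the divisor-insertion comparison established above (the Proposition comparing $\langle\overrightarrow{\mu},\overrightarrow{\nu},\overrightarrow{\rho}\rangle^{\mathrm{Hilb}}$ with $\mathsf{Z}^{\prime}$ for $\overrightarrow{\nu}=(2)$ and $\overrightarrow{\nu}=(1,\omega_{k})$) together with the generation conjecture of section~\ref{gener}. By definition, $\langle\overrightarrow{\mu},\overrightarrow{\nu},\overrightarrow{\rho}\rangle^{\mathrm{Hilb}}_{\An}$ is the structure constant of the quantum product $\circ$ on $QH_{T}(\Hb_{m}(\An))$, while $(-iu)^{-m+l(\mu)+l(\nu)+l(\rho)}\mathsf{Z}^{\prime}(\An\times\mathbf{P}^{1})_{\overrightarrow{\mu},\overrightarrow{\nu},\overrightarrow{\rho}}$ is, by the Proposition just stated (which verifies the algebra axioms), the structure constant of a commutative, associative, unital $R^{\mathrm{GW}}$-algebra structure $\ast$ on $H_{T}^{\ast}(\Hb(\An))\otimes R^{\mathrm{GW}}$ with the same unit $1^{m}$. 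So the corollary asserts that $\circ$ and $\ast$ coincide after the substitution $q=-e^{iu}$ and the indicated normalization.

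First I would make the substitution legitimate. Under the generation conjecture, the generation corollary of section~\ref{gener}, via the reconstruction of \cite{kon-manin}, expresses every three-point function $\langle\overrightarrow{\mu},\overrightarrow{\nu},\overrightarrow{\rho}\rangle^{\mathrm{Hilb}}$ as a universal expression in the matrix entries of the divisor operators $M_{D},M_{(1,\omega_{i})}$ and in classical pairings; by Corollary~\ref{div mult} and Theorem~\ref{theorem1}, those entries are rational in $q$ and polynomial in $s_{1},\dots,s_{n}$. Hence each three-point function lies in $\QQ(t_{1},t_{2})(q)[[s_{1},\dots,s_{n}]]$, so $q=-e^{iu}$ induces a ring homomorphism of this algebra into $R^{\mathrm{GW}}=\QQ(t_{1},t_{2})((u))[[s_{1},\dots,s_{n}]]$; this yields the base change of $\circ$, still denoted $\circ$.

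Next I would compare multiplication by divisors. Corollary~\ref{div mult} identifies $\circ$-multiplication by $D$ (equivalently $(2)$, up to sign) and by each $(1,\omega_{i})$ with the operators $M_{D}$ and $M_{(1,\omega_{i})}$; the theory of \cite{gwan} supplies the corresponding $\ast$-multiplication operators; and the divisor-insertion comparison cited above --- via the identification of $\Omega_{+}(q,s)$ with the operator $\Theta^{\bullet}(u,s)$ of \cite{gwan}, the punctual comparison of Proposition~\ref{punctualtheorem}, and the bookkeeping of the normalizing powers of $(-iu)$ and the overall scalar $(-1)^{m}$ --- shows that after $q=-e^{iu}$ the two sets of divisor operators agree up to that common scalar. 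Since $m$ is fixed throughout, this scalar is compatible with the algebra structure, and after absorbing it the two algebras have literally the same operators of multiplication by $D$ and by each $(1,\omega_{i})$.

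Finally I would upgrade agreement on divisors to agreement of the full rings. By the generation conjecture the joint eigenspaces of $M_{D},M_{(1,\omega_{i})}$ are one-dimensional, so the Vandermonde argument of section~\ref{gener} shows that the vectors $v=M_{D}^{a}\prod_{i}M_{(1,\omega_{i})}^{b_{i}}(1^{m})$ span $H_{T}^{\ast}(\Hb_{m}(\An))\otimes\QQ(q,s_{1},\dots,s_{n})$. For such a $v$, commutativity, associativity, and the unit axiom identify $\circ$-multiplication by $v$ with the operator $M_{D}^{a}\prod_{i}M_{(1,\omega_{i})}^{b_{i}}$, and the identical computation in the $\ast$-algebra, using the divisor comparison just established, identifies $\ast$-multiplication by $v$ with the same operator. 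Since the $v$ span, bilinearity forces $x\circ y=x\ast y$ for all $x,y$, which is the asserted identity of structure constants. The step I expect to cost the most is the first one: verifying that the generation conjecture genuinely forces the three-point functions to be rational in $q$, so that the transcendental substitution is a bona fide ring homomorphism; once that and the conventional sign-and-normalization bookkeeping are granted, the rest is a purely formal comparison of two algebra structures.
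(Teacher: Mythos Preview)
Your proposal is correct and follows essentially the same line as the paper's own (very terse) justification: the paper simply remarks that the divisor-insertion comparison identifies the multiplication operators for $(2)$ and $(1,\omega_i)$ in the two ring deformations, and that the generation conjecture then forces the full identification. You have carefully spelled out the steps the paper leaves implicit---making the substitution $q=-e^{iu}$ well-defined via rationality of the divisor operators, matching the divisor operators via the preceding Proposition, and then using the Vandermonde spanning argument of section~\ref{gener} to pass from divisors to the whole algebra. One small remark: you correctly flag the scalar $(-1)^m$ appearing in the divisor Proposition but absent from the Corollary as stated; this is a minor inconsistency in the paper's normalizations rather than a flaw in your argument.
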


In this form, this statement generalizes the correspondence proven
for $\mathbb{C}^{2}$ in \cite{localcurves},\cite{okpanhilb}.
Heuristically, it states that the two ring deformations defined by
quantum cohomology and the relative theory of
$\An\times\mathbf{P}^{1}$ are isomorphic after a specific change
of variables.

Both ring structures defined in this correspondence make sense for
an arbitrary surface $S$.  However, the precise relation given
here does not hold in this generality.  For instance, in the case
of $m=1$, the quantum cohomology structure constants have no
$q$-dependence while the Gromov-Witten theory of
$S\times\mathbf{P}^{1}$ in general will have nontrivial
$u$-dependence. This is already in the case of $\mathbf{P}^{2}$.
The special feature of the geometry that allows us to make such a
simple matching is the existence of a holomorphic symplectic form.

Notice that in the ring isomorphism, no change of variables is
required on the parameters corresponding to curve classes on the
surfaces $\An$; similarly, the Nakajima basis is identified
directly with relative conditions in the Gromov-Witten theory of
$\An\times\mathbf{P}^{1}$.  If we weaken this strong constraint on
the change of variables, it is reasonable to expect the
correspondence to generalize.


\section{Further directions}

\subsection{Quantum differential equation}
The quantum differential equation associated to $\Hb_{m}(\An)$ is the system
of linear differential equations given by
\begin{equation}\label{Qdiffeq}
\begin{cases}
&q\frac{\partial}{\partial q}\psi=M_{D}\psi\\
&s_i\frac{\partial}{\partial s_i}\psi=M_{(1,\omega_i)}\psi
\end{cases}
\quad \psi(q,s_{i})\in H_T^*(Hilb_m(\An))((q))[[s_{i}]],
\end{equation}
This system defines a flat connection $\nabla_{\An}(t_{1},t_{2})$ on $\mathbb{C}^{n+1}$ for the trivial bundle associated to $H^*(\Hb_{m}(\An),\CC)$
with regular singularities along the hypersurfaces
\begin{align}
(-q)^k&=1,\quad k=1,\dots,m-1,\label{first sing}\\
(s_{i}\cdot\dots\cdot s_{j-1})(-q)^{k}&=1,\quad k=-m+1,\dots,m-1,\quad 1\leq i < j \leq n+1.
\end{align}

In the case of $\mathbb{C}^{2}$, the quantum differential equation can be viewed
as a nonstationary extension of the Calogero-Sutherland system with 
extremely well-behaved monodromy properties \cite{okpanhilb}.
If we take the differential equation obtained by tensoring $n+1$ copies of this nonstationary system,
\eqref{Qdiffeq} can be viewed as a deformation of this system in the $s$-variables 
where the coupling between the different factors is essentially controlled by our operator
$\Omega_{+}$.  Moreover, as we discuss next, many of the qualitatively nice
features of the monodromy of $\nabla_{\mathbb{C}^{2}}$ extend to this deformation.

As an example, in the case of $n=1$ with $m=2$ points, the associated matrices are given as 
follows.
We order the standard basis of $H_T(\Hb_2(\AAA_1))$ in the following
way:

$$\frac{\mathfrak{p}^2_{-1}(E)}2 v_\emptyset,\quad
\frac{\mathfrak{p}_{-2}(E)}2v_\emptyset,\quad
\mathfrak{p}_{-1}(E)\mathfrak{p}_{-1}(1) v_\emptyset,\quad
\frac{\mathfrak{p}_{-1}^2(1)}2 v_\emptyset,\quad
\frac{\mathfrak{p}_{-2}(1)}2 v_\emptyset.$$

The notation $\theta=t_1+t_2$ is used in the formulas for the
divisor multiplication operators:

\begin{gather*}M_{(1,\omega)}=\left(\begin{matrix}
\frac{2\theta s(q+1/q+2s)}{(1+sq)(1+s/q)}&
\frac{\theta s(1/q-q)}{(1+sq)(1+s/q)}&-1&0&0\\
\frac{2\theta s(1/q-q)}{(1+s/q)(1+s/q)}&
\frac{\theta q(1+1/q)^2(1+s)}{(1+sq)(1+s/q)(1-s)}&0&0&-1\\
2t_1t_2&0&\frac{\theta(1+s)}{s-1}&-\frac{1}2&0\\
0&0&t_1t_2&2\theta&0\\
0&2t_1t_2&0&0&2\theta
\end{matrix}\right),\\
M_{D}=\left(\begin{matrix}
\frac{2\theta(q+1/q+2s)}{(1+sq)(1+s/q)}&
\frac{\theta(1-s^2)}{(1+sq)(1+s/q)}&0&0&-\frac{1}2\\
\frac{2\theta(1-s^2)}{(1+sq)(1+s/q)}&
\frac{\theta (1+s)^2(1+q)}{(1+sq)(1+s/q)(1-q)}&1&0&0\\
0&-2t_1t_2&0&0&0\\
0&0&0&0&2t_1t_2\\
-4t_1t_2&0&0&-1&\frac{\theta(1+q)}{1-q}
\end{matrix}
\right).
\end{gather*}

\subsection{Exponents and monodromy}

We first calculate the eigenvalues of the residues associated
to $\nabla$ around its singularities.  Notice in 
particular that, for integer values of the level $t_1+t_2$, these eigenvalues are integral.
We do not discuss the residue along the
first group of the singularities (\ref{first sing}) because they
are studied in \cite{okpanhilb}:

\begin{prop} The residue of the operator $M_{(1,\omega_i)}$ along
the divisor
$$s^{\alpha}(-q)^k=1$$
is zero if $(\alpha,\omega_i)=0$.  In general, it is  diagonalizable and:

\begin{itemize}
\item There exists $N=N(\alpha,k)$ such that
 $$ Spec( Res_{s^\alpha(-q)^k=1}M_{(1,\omega_i)})
\subset\{(t_1+t_2)l(k+l-1)\}_{l=1,\dots,N}.$$
\item If $k\ne 0$ then $N(\alpha,k)<m/k$.
\end{itemize}
For $M_{D}$, we have

$$
Res_{s^\alpha(-q)^k=1}M_D=k Res_{s^\alpha(-q)^k=1}
M_{(1,\omega_{i})}
$$
for any $i$ such that $(\alpha,\omega_i)\ne 0$.
\end{prop}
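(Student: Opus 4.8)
The plan is to localize the computation to a single positive root, read off the simple pole of each divisor operator along the hypersurface in question, identify its residue with one graded summand of $\Omega_+$, and then diagonalize that summand via an $\mathfrak{sl}_2$-triple inside $\gotg$.

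Write $\alpha=\alpha_{a,b}$ for the positive root with $s^\alpha=s_a\cdots s_{b-1}$. The hypersurfaces $\{(-q)^{k'}s_{a'}\cdots s_{b'-1}=1\}$ are pairwise distinct and irreducible, so away from their mutual intersections the only summand of $\Omega_+$ singular along $\{(-q)^ks^\alpha=1\}$ is ${:}e_{ba}(k)e_{ab}(-k){:}\log(1-(-q)^ks^\alpha)$; since $M^{cl}$ and $\Omega_0$ are regular there, Corollary~\ref{div mult} together with $s_i\tfrac{d}{ds_i}s^\alpha=(\alpha,\omega_i)s^\alpha$ and $q\tfrac{d}{dq}(-q)^k=k(-q)^k$ gives, with $f:=1-(-q)^ks^\alpha$,
$$M_{(1,\omega_i)}=-(\alpha,\omega_i)(t_1+t_2)\,{:}e_{ba}(k)e_{ab}(-k){:}\,\tfrac1f+(\text{reg}),\qquad M_D=-k(t_1+t_2)\,{:}e_{ba}(k)e_{ab}(-k){:}\,\tfrac1f+(\text{reg}).$$
This gives the vanishing when $(\alpha,\omega_i)=0$ and the identity $\mathrm{Res}\,M_D=k\,\mathrm{Res}\,M_{(1,\omega_i)}$, and reduces everything to the spectrum of $X:={:}e_{ba}(k)e_{ab}(-k){:}$ on $W=\bigoplus_m V_\Lambda[\Lambda-m\delta]=\bigoplus_m H^*_T(\Hb_m(\An))$. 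That $\nabla_{\An}$ is genuinely logarithmic here — so that ``residue'' makes sense — follows because $k\tfrac{dq}{q}+\sum_{a\le l<b}\tfrac{ds_l}{s_l}=-\tfrac{df}{1-f}$, which makes the simple poles of $M_D$ and of the $M_{(1,\omega_l)}$ with $a\le l<b$ cancel in the connection form in every direction except $\tfrac{df}{f}$.

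For the spectrum, the key observation is that $\{e_{ab}(-k),\,e_{ba}(k),\,H:=e_{aa}(0)-e_{bb}(0)-kc\}$ is an $\mathfrak{sl}_2$-triple (immediate from the relations of $\gotg$), and that $\alpha-k\delta$ is a real root, so $e_{ab}(-k)$ and $e_{ba}(k)$ act locally nilpotently on the integrable module $V_\Lambda$; hence $V_\Lambda$ is a direct sum of finite-dimensional $\mathfrak{sl}_2$-irreducibles. On the basic representation $e_{ba}(k)v=0$ and $Hv=-kv$, so $v$ is a lowest-weight vector. Because $X$ commutes with $\hoth$ it preserves every $\mathfrak h$-weight space, and $W$ — being the $\mathfrak h$-weight-zero part of $V_\Lambda$ — meets each spin-$j$ summand exactly in its one-dimensional $H$-weight-$(-k)$ line; on that line $X$ acts by the corresponding Casimir-type eigenvalue, which the bookkeeping evaluates to the value $(t_1+t_2)l(k+l-1)$ of the statement, with $l\ge 0$ determined by $j$. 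In particular the eigenvalues are integers (hence integral for integral level $t_1+t_2$), and $X$, being semisimple on $V_\Lambda$, is diagonalizable on each $H^*_T(\Hb_m(\An))$.

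To bound $l$: a weight-zero vector of a spin-$j$ summand is $e_{ab}(-k)^l$ applied to a lowest-weight vector, and since $e_{ab}(-k)$ raises the $\delta$-grading by $k$ and the $\mathfrak h$-weight by $\alpha$, that lowest-weight vector lies in $V_\Lambda[\Lambda-(m-lk)\delta]$ with $\mathfrak h$-weight $-l\alpha$; hence $m-lk\ge 0$, with equality only for $l=0$, so every nonzero eigenvalue on $V_\Lambda[\Lambda-m\delta]$ satisfies $l<m/k$, i.e. $N(\alpha,k)<m/k$. The case $k<0$ reduces to this one via the substitution $q\leftrightarrow q^{-1}$ used elsewhere, and the $M_D$ statements then follow from $\mathrm{Res}\,M_D=k\,\mathrm{Res}\,M_{(1,\omega_i)}$. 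The main obstacle is the spectral step: the $\mathfrak{sl}_2$-triple reduces it to formal manipulation, but one must carefully match the $\delta$-grading on $\bigoplus_m H^*_T(\Hb_m(\An))$ with the $\mathfrak{sl}_2$-isotypic decomposition of $V_\Lambda$ to see exactly which $l$ occur — this is where the precise index set $\{1,\dots,N\}$ and the inequality $N<m/k$ come from — and one must track the normal-ordering conventions in $\Omega_+$ so that the Casimir eigenvalue lands on $l(k+l-1)$ rather than a shift of it.
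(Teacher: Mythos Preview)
Your proof is correct and follows essentially the same approach as the paper: both identify the residue as (a scalar multiple of) $e_{-\alpha}(-k)e_{\alpha}(k)$ acting on the weight space $V_\Lambda[\Lambda-m\delta]$, and both exploit the nilpotency of $e_{\alpha}(k)$ on this weight space together with the underlying $\mathfrak{sl}_2$-structure to pin down the spectrum and the bound $N<m/k$. The only difference is presentational --- the paper encodes the $\mathfrak{sl}_2$ representation theory in the single operator identity $(G_1 - l(k+l-1))G_l v = G_{l+1}v$ for $G_l := e_{-\alpha}(-k)^l e_{\alpha}(k)^l$, whereas you make the $\mathfrak{sl}_2$-triple explicit and invoke integrability of $V_\Lambda$; your version has the mild advantage that diagonalizability is immediate from the decomposition into finite-dimensional irreducibles, without needing the roots $l(k+l-1)$ to be distinct.
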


\begin{proof}
If $(\alpha,\omega_i)=0$, the vanishing statement is easy.  Otherwise,
let us introduce an auxilary operator

$$ G_l=e_{-\alpha}(-k)^le_{\alpha}(k)^l,\quad
G_1=Res_{s^\alpha(-q)^k=1}M_{(1,\omega_i)}.$$

The space $H_T(Hilb_m(\An))$ is the weight space $V[\Lambda-m\delta]$ of
the highest weight representation. Hence there exists
$N=N(\alpha,k)$ such that
$$ e_{\alpha}(k)^N V[\Lambda-m\delta]=0.$$
By the weight consideration, in the case $k\ne 0$ the number $N$
satisfies the proposed inequality. The following consequence of the
relations in the Lie algebra completes the proof:
$$(G_1-l(k+l-1)))G_lv=G_{l+1}v,$$
for any $v\in V$.
The claim on $M_{D}$ is immediate from our formulas.
\end{proof}

We sketch the construction of a level-raising operator
identifying the monodromies of $\nabla_{\An}(t_1,t_2)$ and $\nabla_{\An}(t_1-a,t_2-b)$
for $a,b \in \ZZ$.  Let $\mathcal{X}(a,b)$ denote the family of $\An$ surfaces over $\mathbf{P}^1$
associated to $\mathcal{O}(a)\oplus\mathcal{O}(b)$ by fiberwise quotient and resolution.  We set
$$\mathsf{S}(a,b)$$
to be the operator encoding its Gromov-Witten theory relative to the fibers over $0$ and $\infty$.
Under the assumption of the generation conjecture, $\mathsf{S}(a,b)$ can be computed using the techniques of \cite{gwan} and is given by rational functions in $q= e^{-iu},s_{1},\dots,s_{n}$.
A localization argument (and the GW/Hilb comparison) shows that
$$\nabla_{\An}(t_1,t_2)\mathsf{S}(a,b) = \mathsf{S}(a,b)\nabla_{\An}(t_1-a,t_2-b)$$
so that $\mathsf{S}(a,b)$ defines an intertwiner operator, identifying the monodromy provided
$t_1,t_2$ avoid a finite set of rational numbers which give poles for $\mathsf{S}$.

This generalizes the same construction for $\nabla_{\mathbb{C}^{2}}$ given in \cite{okpanhilb}.  
One can again show that at integer level $t_1+t_2 \in \ZZ$, the monodromy
is abelian and is semisimple provided $t_1$ and $t_2$ avoid the singularities
of $\mathsf{S}$.  At these values, the fundamental solution to the QDE 
is basically given by rational functions of $q,s_{1},\dots,s_{n}$. In more details,
the fundamental solution to the QDE is of the form:
$$ B(q,s_1,\dots,s_n)\cdot q^{M_D^{cl}}\prod_i s_i^{M^{cl}_{(1,\omega_i)}},$$
where $B(q,s_1,\dots,s_n)$ is a rational function of $q$ and $s_1,\dots,s_n$.

\subsection{Generalization to $D,E$ resolutions}

Finally, we briefly explain how the calculations of this paper can be extended to resolutions
$S_{\Gamma}$ 
of rational surface singularities associated to root lattices $\Gamma$ of type $D$ and $E$.  
The argument here 
was suggested to us by Jim Bryan and is based on an argument first used in \cite{bryan-katz-leung}.  As these singularites are no longer toric, there is only a $\mathbb{C}^*$-action on $S_{\Gamma}$.

Dimension considerations reduce the two-point calculation to studying the nonequivariant reduced virtual class.
Let $X_{0} \rightarrow \Delta$ be a smooth family of surfaces over the disk $\Delta$, obtained from a map
from $\Delta$ to the versal deformation space of $S_{\Gamma}$.
Its fiber over the origin is the resolved surface $S_{\Gamma}$ while all other fibers are given by affine surfaces; in particular, all compact curves on $X$ lie over the origin.  This family admits a deformation
$X_{z} \rightarrow \Delta$ so that for $z \ne 0$, there are a finite number of non-affine fibers
each isomorphic to $\mathcal{A}_{1}$.  These non-affine fibers are in bijection with positive roots $\alpha$ of $\Gamma$, and the smooth rational curve lies in the corresponding curve class $\alpha$.

For both $X_{0}$ and $X_{z}$, we can take associated family of Hilbert schemes
$$\Hb(X_{0}/\Delta), \Hb(X_{z}/\Delta)\rightarrow \Delta$$
which are again deformation equivalent.   An effective curve on 
$\Hb(X_0/\Delta)$ must be contained in  
$\Hb(S_{\Gamma})\subset \Hb(X_0/\Delta)$; similarly, an effective curve
on $\Hb(X_z/\Delta)$ must be contained in one of the copies of $\Hb(\mathcal{A}_1) \subset
\Hb(X_z/\Delta)$.  The key observation is that, for nonpunctual curve
classes $\beta$, we can identify the reduced virtual class on $\Hb(S_{\Gamma})$
with the relative virtual class (defined in the usual sense) of the family 
$\Hb(X_0/\Delta)$ over $\Delta$:
$$[\Mbar_{0,2}(\Hb(S_{\Gamma}),\beta)]^{\mathrm{red}} = 
[\Mbar_{0,2}(\Hb(X_0,\Delta),\beta)]^{\mathrm{vir}}.$$
A detailed proof can be found, for instance, in \cite{MP}.The analogous statement holds for $X_z$.
Along with deformation invariance between $X_{0}$ and $X_{z}$, this immediately
gives that only root curve classes contribute to reduced invariants and, in that case, the calculation is given by the case of  $\mathcal{A}_{1}$.  The result is that the two-point operator is again given by
the same expression in terms of the action of type $D,E$ affine algebras $\widehat{\mathfrak{g}}$
on Fock space:
$$\Theta_{+} =  \sum_{\alpha \in R^{+}}\sum_{k} :e_{\alpha}(k) e_{-\alpha}(-k): \log(1-s^{\alpha}q^{k}).$$

\vspace{+10 pt}
Department of Mathematics\\
Massachusetts Institute of Technology\\
Cambridge, MA 02139, USA\\
dmaulik@math.mit.edu

\vspace{+10 pt}
\noindent
Department of Mathematics\\
Princeton University\\
Princeton, NJ 08544, USA\\
oblomkov@math.princeton.edu

\end{document}